\numberwithin{equation}{section}
\newtheorem{theorem}{Theorem}[section]
\newtheorem{lemma}[theorem]{Lemma}
\newtheorem{corollary}[theorem]{Corollary}
\newtheorem{definition}{Definition}[section]
\newtheorem{remark}[theorem]{Remark}
\newcommand{\mylabel}[2]{#2\def\@currentlabel{#2}\label{#1}}
\newcommand{\Hgrad}{H^1(\Omega)}
\newcommand{\Hdiv}{H(\mbox{div}, \Omega)}
\newcommand{\Hcurl}{H(\mbox{curl}, \Omega)}
\newcommand{\Hcurlzero}{H_0(\mbox{curl}, \Omega)}
\newcommand{\Hdivzero}{H_0(\mbox{div}, \Omega)}
\newcommand{\BB}{\bm{B}}
\newcommand{\ppsi}{\bm{\psi}}
\newcommand{\mm}{\bm{m}}
\newcommand{\bb}{\bm{b}}
\newcommand{\ee}{\bm{e}}
\newcommand{\ff}{\bm{f}}
\newcommand{\nn}{\bm{n}}
\newcommand{\ii}{\bm{i}}
\newcommand{\jj}{\bm{j}}
\newcommand{\kk}{\bm{k}}
\newcommand{\pp}{\bm{p}}
\newcommand{\rr}{\bm{r}}
\newcommand{\uu}{\bm{u}}
\newcommand{\ruu}{\bm{{\rm u}}}
\newcommand{\vv}{\bm{v}}
\newcommand{\ww}{\bm{w}}
\newcommand{\xx}{\bm{x}}
\newcommand{\ttheta}{\bm{\theta}}
\newcommand{\vspan}{{\mbox{span}}}
\newcommand{\rank}{{\mbox{rank}}}
\newcommand{\lev}{{L}}
\newcommand{\vecu}{\ww}
\newcommand{\mfm}{\mathfrak{m}}
\newcommand{\mfa}{\mathfrak{a}}
\newcommand{\mfs}{\mathfrak{s}}
\newcommand{\mfmi}[1]{\mfm_{[#1]}}
\newcommand{\mfai}[1]{\mfa_{[#1]}}
\newcommand{\mfsi}[1]{\mfs_{[#1]}}
\newcommand{\Grad}{\nabla }
\newcommand{\Curl}{\nabla \times}
\newcommand{\Div}{\nabla \cdot}
\newcommand{\laplacian}{\mathfrak{L}}
\newcommand{\mysquare}{{\scriptscriptstyle\square}}
\newcommand{\mycircle}{{\text{\small o}}}
\date{}
\begin{document}

\title{Isogeometric analysis for 2D and 3D curl-div problems: \\ Spectral symbols and fast iterative solvers}

\author[1]{Mariarosa Mazza\footnote{Corresponding author}}

\author[2]{Carla Manni}

\author[1,3]{Ahmed Ratnani}

\author[4,5]{Stefano Serra-Capizzano}

\author[2]{Hendrik Speleers}

\affil[1]{\small Max-Planck Institut f\"ur Plasmaphysik, Boltzmannstra{\ss}e 2, 87548 Garching bei M{\"u}nchen, Germany}
\affil[2]{University of Rome ``Tor Vergata'', Via della Ricerca Scientifica 1, 00133 Rome, Italy}
\affil[3]{\small Technische Universit\"at M\"unchen, Boltzmannstra{\ss}e 3, 87548 Garching bei M\"unchen, Germany}

\affil[4]{\small University of Insubria, Via Valleggio 11, 22100 Como, Italy}
\affil[5]{\small Uppsala University, Box 337
SE-751 05, Uppsala, Sweden}

\maketitle

\begin{abstract}
\noindent Alfv\'en-like operators are of interest in magnetohydrodynamics, which is used in plasma physics to study the macroscopic behavior of plasma.
Motivated by this important and complex application, we focus on a parameter-dependent curl-div problem that can be seen as a prototype of an {Alfv\'en}-like operator, and we discretize it using isogeometric analysis based on tensor-product B-splines. The involved coefficient matrices can be very ill-conditioned, so that standard numerical solution methods perform quite poorly here.
In order to overcome the difficulties caused by such ill-conditioning, a two-step strategy is proposed. First, we conduct a detailed spectral study of the coefficient matrices, highlighting the critical dependence on the different physical and approximation parameters. Second, we exploit such spectral information to design fast iterative solvers for the corresponding linear systems.
For the first goal we apply the theory of (multilevel block) Toeplitz and generalized locally Toeplitz sequences, while for the second we use a combination of multigrid techniques and preconditioned Krylov solvers.
Several numerical tests are provided both for the study of the spectral problem and for the solution of the corresponding linear systems.
\end{abstract}

\noindent\textbf{Keywords: } Alfv\'en-like operator; isogeometric analysis; spectral symbol; GLT theory; multigrid techniques; Krylov preconditioning

\let\thefootnote\relax\footnote{\emph{Email addresses}: \texttt{\texttt{mariarosa.mazza@ipp.mpg.de}} (Mariarosa Mazza), \texttt{manni@mat.uniroma2.it} (Carla Manni),\\ \texttt{ahmed.ratnani@ipp.mpg.de} (Ahmed Ratnani), \texttt{stefano.serrac@uninsubria.it} (Stefano Serra-Capizzano),\\ \texttt{speleers@mat.uniroma2.it} (Hendrik Speleers)}

\section{Introduction}\label{sec:intro}

Plasmas are known to be the forth state of matter together with gas, liquid and solid. In fact, 99\% of the universe is composed of plasmas. Cold and hot plasmas pervade  many fields, including medical and waste processing, aerospace and aviation industries and nuclear fusion energy. The sun is a natural fusion reactor. Building such a device on earth is challenging: it requires the magnetic confinement of hot plasma particles; the temperature may reach $10^8$ degrees at the center and drops quickly when approaching the device wall. These huge gradients in temperature, pressure or density and the strong magnetic field lead to very high anisotropies reaching up to ten orders of magnitude.
Divertor tokamaks and stellarators are the only known devices that have chances to succeed in a nuclear fusion plant. The former have a  toroidal geometry but some undesirable instabilities, while the latter have more complicated geometries but less instabilities to control.
Understanding the physics of such devices requires robust software for numerical experiments, which are necessary because of the exorbitant  cost of the devices themselves.

In plasma physics, magnetohydrodynamics (MHD) is used to study the macroscopic behavior of the plasma. For instance, with additional physics extensions, many aspects of the large-scale instabilities that appear in a magnetic confined plasma  \cite{Huysmans2009a,Becoulet2014a} can be described satisfactorily in the MHD framework.
The set of equations that describe MHD are a combination of the Navier-Stokes equations of fluid dynamics and the Maxwell equations of electromagnetism \cite {Jardin2010}. Solving MHD equations globally in the (complicated) geometry of a divertor tokamak or a stellarator is a highly demanding task because of the strong temporal and spatial multi-scale nature of the problem and of the high anisotropies mentioned above. Due to these difficulties, explicit time integrators are in general not suitable because they lead to very small time steps, while the use of implicit methods leads to very ill-conditioned matrices.

Over the last decade, a promising technique, called \emph{physics-based preconditioning} \cite{Chacon2008,Chacon2016, Philip2008,Shadid2010}, leaded to new scalable MHD solvers. The proposed algorithm uses a multigrid as a preconditioner for a Jacobian-free Newton-Krylov method. The preconditioner is constructed by parabolization of the hyperbolic partial differential equations (PDEs). In order to understand how such physics-based preconditioner works, one first needs to unravel the spectral properties of every operator and also what are their dependency and pathologies with respect to both the discretization and physical parameters.

Among the different operators, one encounters the {Alfv\'en}-like operator \cite{Jardin}, which is characterized by a weighting of the curl and div operators. Its formal definition is given by
\begin{equation*}
\mathcal{L}_A \uu := \nu \uu - \beta\lambda \nabla (\Div \uu) - \lambda \left( \bb_0 \times \left(\nabla \times \Curl (\bb_0 \times \uu)\right) \right),
\end{equation*}
where $\nu>0$, $\beta\in\bigl(10^{-4},10^{-1}\bigr]$, $\lambda:=V_A \Delta t$ is the numerical Alfv\'en length,
and $\mathbf{b_0}:=\frac{\BB_0}{\| \BB_0 \|}$ with $\BB_0$ the magnetic field, $V_A:=\frac{\| \BB_0 \|}{\sqrt{\rho_0 \mu_0}}$ the Alfv\'en speed, $\Delta t$ the time step for the implicit time scheme, $\mu_0$ the permeability of the vacuum and $\rho_0$ the density. The operator $\mathcal{L}_A$ needs to be inverted in an optimal way, using an algorithm with a high scalability property over a computational domain $\Omega \subset \mathbb{R}^d$, $d=2,3$.
To this end, we need to understand the competition between the curl and div terms. Therefore, in this paper, we focus on the simpler weighted operator 
\begin{equation}
\mathcal{L}_{\alpha, \beta} \uu := - \beta \Grad \Div \uu + \alpha \Curl \Curl \uu,
\label{eq:alfven-op}
\end{equation}
with $\alpha \sim 1$ and $\beta \in \bigl(10^{-4},10^{-1}\bigr]$.
This problem contains the essential features of $\mathcal{L}_A$. Furthermore, we remark that such a parameter-dependent operator has an interest in itself and in fact it appears in other situations, including the Stokes equation and Maxwell equations \cite{Ciarlet2005}. It can also be seen as a \textit{weighted} Laplacian for vector fields (equivalently, Hodge Laplace for 1-forms).

The functional analysis framework traditionally involves the Sobolev spaces $\Hcurl$ and $\Hdiv$ and more generally the de Rham sequence \cite{Monk2003, Cessenat1996} when using a mixed formulation \cite{Arnold2012}.
The natural space for the unknown field $\uu$ is $\Hcurlzero \cap \Hdiv$ or $\Hcurl \cap \Hdivzero$.
In this paper we assume $\Omega=(0,1)^d$, $d=2,3$, and so both $\Hcurlzero \cap \Hdiv$ and $\Hcurl \cap \Hdivzero$ are continuously embedded into $\bigl(\Hgrad\bigr)^d$ \cite{Girault1986}.
Results on the well-posedness and approximation still hold in $\bigl(\Hgrad\bigr)^d$, and we refer the reader to \cite{Arnold2012,Auchmuty2005} and the references therein.
Therefore, we consider the following variational formulation of \eqref{eq:alfven-op} in a finite-dimensional vector space $\mathbb{V}_h \subset \bigl(\Hgrad\bigr)^d$,
\begin{equation}\label{eq:curl-div}
(\mathcal{L}_{\alpha, \beta} \uu_h, \vv_h) = \alpha(\Curl \uu_h,\Curl\vv_h) +\beta(\Div\uu_h,\Div\vv_h), \quad \forall \uu_h,\vv_h \in \mathbb{V}_h,
\end{equation}
to find an approximate solution of the problem $\mathcal{L}_{\alpha, \beta} \uu=\ff$ with suitable boundary conditions.
We focus on isogeometric analysis (IgA) as discretization technique. More precisely, we choose our approximation space $\mathbb{V}_h$ to be composed of vector fields whose components are linear combinations of tensor-product B-splines.

The discretization of problems based on the weighted operator \eqref{eq:curl-div} leads to solving linear systems,
where the involved coefficient matrices depend on many factors: the problem parameters $\alpha$, $\beta$, the basic curl and div operators, the fineness parameter and the degree of the B-spline approximation. Numerical experiments show that the linear algebra problems range from ill-conditioned to severely ill-conditioned, and hence standard numerical solution methods perform quite poorly on such problems.

In order to overcome the difficulties given by the observed ill-conditioning, a two-step strategy is proposed. First, we conduct a detailed spectral study of the coefficient matrices, highlighting the critical dependence on the different parameters, and then we use such spectral information to design fast iterative solvers for the corresponding linear systems.
For the first goal we apply the theory of (multilevel block) Toeplitz \cite{GSz,Tillinota,TyZ} and  generalized locally Toeplitz \cite{GS,GLT-bookII,GLT-bookIII} sequences, while for the second we use the spectral knowledge and a combination of multigrid techniques \cite{libroBramble} and preconditioned Krylov solvers \cite{libroSaad}.

The theoretical investigation and the numerics show that the important characteristics of the spectral distribution of the coefficient matrices obtained from the B-spline discretization of \eqref{eq:curl-div} can be properly estimated in terms of the  spectrum of the matrices arising from the same discretization for the standard Laplace operator \cite{GaroniNM,GMSSS} suitably weighted by the problem parameters $\alpha$ and $\beta$. This gives the theoretical foundations for the proposed iterative solvers for the corresponding linear systems, which turn out to be robust with respect to both the fineness parameter and the approximation degree of the used discretization.

For the sake of simplicity, we only consider homogeneous Dirichlet (no-slip) boundary conditions, i.e., $\uu = 0$ on $\partial \Omega$, imposed in strong form.
However, our spectral analysis involves solely internal knots, and therefore, applies to any kind of boundary conditions. In particular, it also applies to Dirichlet boundary conditions imposed weakly by a Nitsche method as described in \cite{Evans2013}. The Nitsche method is a good alternative to handle Dirichlet boundary conditions and domain decomposition, which may be a key point to devise a scalable solver for MHD in a complex geometry.

A similar symbol-based two-step strategy has already been successfully employed in 
\cite{BeSe,serra2,system-PDE-FEM,MAS,mog,glt} 
for different types of differential equations or systems of differential equations, discretized by various techniques such as finite differences, finite elements, and IgA.
In particular, the symbol-based approach has been investigated in \cite{serra2,cmame2,SINUM-IgA} for IgA discretizations of (scalar) second-order elliptic problems; alternative iterative solution methods can be found in \cite{BPX,Clemens,ST}.

The remainder of the paper is organized as follows. In Section~\ref{sec:spectral_analysis} we give notations and definitions  relevant for our spectral analysis, and in Section~\ref{sec:notation} we introduce the basics of B-spline discretizations. Section \ref{sec:discr} describes the B-spline discretization of \eqref{eq:curl-div} with homogeneous Dirichlet boundary conditions. In Section \ref{sec:spectral} we perform a detailed spectral analysis of the resulting matrices and discuss few numerical tests. In Section \ref{sec:proposals} we exploit such a spectral information to design ad hoc solvers for the corresponding linear systems, and we illustrate their performance with several numerical examples for both 2D and 3D cases. Finally, we conclude the paper in Section \ref{sec:end}.

\section{Preliminaries on spectral analysis}\label{sec:spectral_analysis}

In this section, we introduce some preliminary spectral tools used later on. First we recall the definition of spectral distribution of matrix-sequences, and then we focus on multilevel block Toeplitz matrices and the GLT theory.
Throughout the paper, we follow the standard convention for operations with multi-indices (see, e.g., \cite{lusin,Ty}).
Furthermore, given a multi-index $\nn:=(n_1,\ldots,n_d)\in\mathbb N^d$,
we say $\nn\rightarrow \infty$ if $n_i\rightarrow \infty$, $i=1,\ldots,d$.

\subsection{Spectral distribution and symbol of a matrix-sequence}\label{sub:symbol}

We begin with the formal definition of spectral distribution in the sense of the eigenvalues and singular values for a general matrix-sequence.

\begin{definition}\label{def-distribution}
	Let $f:G\to\mathbb{C}^{s\times s}$ be a measurable matrix-valued function, defined on a measurable set $G\subset\mathbb R^q$ with $q\ge 1$, $0<\mu_q(G)<\infty$, where $\mu_q$ is the Lebesgue measure. Let $\mathcal C_0(\mathbb K)$ be the set of continuous functions with compact support over $\mathbb K\in \{\mathbb C, \mathbb R_0^+\}$ and let $\{A_{\nn}\}_{\nn}$ be a matrix-sequence with ${\rm dim}(A_{\nn})=:d_{\nn}$ and $d_{\nn}\rightarrow\infty$ as $\nn\rightarrow \infty$.
	\begin{enumerate}
		\item[(a)] $\{A_{\nn}\}_{\nn}$ is {distributed as the pair $(f,G)$ in the sense of the eigenvalues,} denoted by
		$$\{A_{\nn}\}_{\nn}\sim_\lambda(f,G),$$ if the following
		limit relation holds for all $F\in\mathcal C_0(\mathbb C)$:
		\begin{align}\label{distribution:sv-eig}
		\lim_{\nn\to\infty}\frac{1}{d_{\nn}}\sum_{j=1}^{d_{\nn}}F(\lambda_j(A_{\nn}))=
		\frac1{\mu_q(G)}\int_G \frac{\sum_{i=1}^{s}F(\lambda_i(f({\bm t})))}{s} {\rm d}{\bm t},
		\end{align}
		where $\lambda_j(A_{\nn})$, $j=1,\ldots,d_{\nn}$ are the eigenvalues of $A_{\nn}$ and $\lambda_i(f)$, $i=1,\ldots,s$ are the eigenvalues of $f$.
		In this case, we say that $f$ is the {(spectral) symbol} of the matrix-sequence $\{A_{\nn}\}_{\nn}$.
		\item[(b)] $\{A_{\nn}\}_{\nn}$ is {distributed as the pair $(f,G)$ in the sense of the singular values,} denoted by
		$$\{A_{\nn}\}_{\nn}\sim_\sigma(f,G),$$ if the following
		limit relation holds for all $F\in\mathcal C_0(\mathbb R_0^+)$:
		\begin{align}\label{distribution:sv-eig-bis}
		\lim_{\nn\to\infty}\frac{1}{d_{\nn}}\sum_{j=1}^{d_{\nn}}F(\sigma_j(A_{\nn}))=
		\frac1{\mu_q(G)}\int_G \frac{\sum_{i=1}^sF(\sigma_i(f({\bm t})))}{s} {\rm d}{\bm t},
		\end{align}
		where $\sigma_j(A_{\nn})$, $j=1,\ldots,d_{\nn}$ are the singular values of $A_{\nn}$ and $\sigma_i(f)$, $i=1,\ldots,s$ are the singular values of $f$. In this case, we say that $f$ is the {singular value symbol} of the matrix-sequence $\{A_{\nn}\}_{\nn}$.
	\end{enumerate}
\end{definition}

If $f$ is smooth enough and the matrix-size of $A_{\nn}$ is sufficiently large, then the limit relation \eqref{distribution:sv-eig} (resp., \eqref{distribution:sv-eig-bis}) has the following informal interpretation:
a first set of $d_{\nn}/s$ eigenvalues (resp., singular values) of $A_{\nn}$ is approximated by a sampling of $\lambda_1(f)$ (resp., $\sigma_1(f)$) on a uniform equispaced grid of the domain $G$, a second set of $d_{\nn}/s$ eigenvalues (resp., singular values) of $A_{\nn}$ is approximated by a sampling of $\lambda_2(f)$ (resp., $\sigma_2(f)$) on a uniform equispaced grid of the domain $G$, and so on.

\begin{definition}
	Let $\{A_{\nn}\}_{\nn}$ be a matrix-sequence with ${\rm dim}(A_{\nn})=:d_{\nn}$ and $d_{\nn}\rightarrow\infty$ as $\nn\rightarrow \infty$. We say that $\{A_{\nn}\}_{\nn}$ is a {zero-distributed matrix-sequence} if $\{A_{\nn}\}_{\nn}\sim_\sigma(0,G)$.
\end{definition}

\subsection{Unilevel and multilevel Toeplitz sequences}\label{sub:toeplitz}
We now recall the definition of Toeplitz sequences generated by univariate functions in $L^1([-\pi, \pi])$.
\begin{definition}
	Let $f \in L^1([-\pi, \pi])$ and let $\hat f_k$ be its Fourier coefficients,
	\begin{align*}
	\hat f_k := \frac{1}{2 \pi} \int_{-\pi}^{\pi} f(\theta) e^{-{\rm i} k \theta} ~ {\rm d}\theta\in\mathbb{C},
	\quad k\in\mathbb Z.
	\end{align*}
	The $n$-th (unilevel) Toeplitz matrix associated with $f$ is the $n \times n$ matrix defined by
	\begin{align*}
	T_n(f):=\left[ \hat f_{i-j} \right]_{i,j=1}^{n}=
	\begin{bmatrix}
	\hat f_{0} &  \hat f_{-1} &   \hat f_{-2} & \ldots &  \hat f_{-(n-1)}  \\
	\hat f_{1} &  \hat f_{0} &   \hat f_{-1} & \ldots &   \\
	\hat f_{2} &  \hat f_{1} &   \hat f_{0} & \ldots & \vdots   \\
	\vdots &     &         & \ddots &  \\
	\hat f_{n-1} & \ldots &   & \ldots &  \hat f_{0}
	\end{bmatrix}.
	\end{align*}
	The matrix-sequence $\{T_n(f)\}_{n}$ is called the {Toeplitz sequence generated by $f$}, which
	in turn is referred to as the {generating function} of $\{T_{n}(f)\}_{n}$.
	
\end{definition}
The Fourier sequence $\{ \hat f_k \}_k$ determines uniquely the function $f$ and vice versa.
Therefore, the function $f$, if it exists, is also uniquely determined by the Toeplitz sequence $\{ T_n(f) \}_{n}$.


The notion of Toeplitz sequences can be generalized to multivariate matrix-valued generating functions. Let $L^1(d,s)$ denote the space of $s\times s$ matrix-valued $L^1$ functions over $[-\pi, \pi]^d$.
\begin{definition}
	Let $f\in L^1(d,s)$ and let $\hat f_{\kk}$ be its Fourier coefficients
	\begin{equation*}
	\hat f_{\kk}:=\frac1{(2\pi)^d}\int_{[-\pi,\pi]^d}f(\ttheta){\rm e}^{-{\rm i}\langle\kk,\ttheta\rangle}\,{\rm d}\ttheta\in\mathbb{C}^{s\times s},
	\quad \kk:=(k_1,\ldots,k_d)\in\mathbb Z^d, \quad \ttheta:=(\theta_1, \ldots, \theta_d)\in[-\pi,\pi]^d,
	\end{equation*}
	where $\langle\kk,\ttheta\rangle:=\sum_{l=1}^d k_l\,\theta_l$.
	Then, the $\nn$-th Toeplitz matrix associated with $f$ is the matrix of size $s\prod_{l=1}^dn_l$ defined by
	\begin{equation*}
	T_{\nn}(f):=\left[\hat f_{\ii-\jj}\right]_{\ii,\jj=\bm{1}}^{\nn}=\sum_{|j_1|<n_1}\cdots\sum_{|j_d|<n_d}\left[J_{n_1}^{(j_1)}\otimes\cdots\otimes
	J_{n_d}^{(j_d)}\right] \hat f_{\jj},
	\end{equation*}
	where $\bm{1}:=(1,\ldots,1)\in\mathbb{N}^d$, $\ii:=(i_1,\ldots,i_d)\in\mathbb{N}^d$, $\jj:=(j_1,\ldots,j_d)\in\mathbb{N}^d$, and $\otimes$ denotes the (Kronecker) tensor product of matrices. The term
	$J_m^{(l)}$ is the matrix of size $m$ whose $(i,j)$ entry equals $1$ if $i-j=l$ and zero otherwise.
	The matrix-sequence $\{T_{\nn}(f)\}_{\nn}$ is called the {$d$-level block Toeplitz sequence generated by $f$}, which
	in turn is referred to as the {generating function} of $\{T_{\nn}(f)\}_{\nn}$.
\end{definition}

Finally, we recall the following result on the spectral distribution of multilevel block Toeplitz sequences \cite{Tillinota}.
\begin{theorem}\label{szego-herm}
	If $f:[-\pi, \pi]^d\rightarrow \mathbb{C}^{s\times s}$ is a Hermitian $L^1(d,s)$ function, then $\{T_{\nn}(f)\}_{{\nn}}\sim_\lambda(f,[-\pi, \pi]^d)$.
\end{theorem}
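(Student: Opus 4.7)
The plan is to reduce \eqref{distribution:sv-eig} for the Hermitian $L^1$ symbol $f$ to a moment identity, and then to propagate it from trigonometric polynomials to continuous symbols and finally to arbitrary $L^1$ symbols by density. Since $f$ is Hermitian, both $f(\ttheta)$ and $T_{\nn}(f)$ are Hermitian, so all their eigenvalues are real. By the Stone--Weierstrass theorem, \eqref{distribution:sv-eig} for every $F\in\mathcal C_0(\mathbb C)$ is equivalent, at least on compact spectral regions, to the moment conditions
\begin{equation*}
\lim_{\nn\to\infty}\frac{1}{s\prod_{l=1}^d n_l}\operatorname{tr}\bigl(T_{\nn}(f)^m\bigr)=\frac{1}{s(2\pi)^d}\int_{[-\pi,\pi]^d}\operatorname{tr}\bigl(f(\ttheta)^m\bigr)\,{\rm d}\ttheta,\qquad m=0,1,2,\ldots
\end{equation*}

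For the polynomial case, I would take $f(\ttheta)=\sum_{|\kk|\le N}\hat f_{\kk}{\rm e}^{{\rm i}\langle\kk,\ttheta\rangle}$ Hermitian and exploit the Kronecker-sum expansion of $T_{\nn}(f)$ given in the definition. Expanding $\operatorname{tr}(T_{\nn}(f)^m)$ gives a finite sum over closed walks $(\jj_1,\dots,\jj_m)$ with $\jj_1+\cdots+\jj_m=\bm 0$, each term weighted by a count of admissible lattice positions inside the box of size $\nn$. The admissible count equals $\prod_l(n_l-|\ast|)$, so dividing by $s\prod_l n_l$ yields the bulk contribution $\tfrac{1}{s}\operatorname{tr}(\widehat{f^m})_{\bm 0}$ up to an $O(\|\nn\|_\infty^{-1})$ boundary correction that vanishes in the limit. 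Recognising $(\widehat{f^m})_{\bm 0}=(2\pi)^{-d}\int f(\ttheta)^m\,{\rm d}\ttheta$ via Parseval for matrix-valued Fourier series, the moment identity follows.

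Passage from polynomials to continuous Hermitian $f$ uses uniform approximation $p_k\to f$ and Weyl's perturbation inequality $|\lambda_j(T_{\nn}(f))-\lambda_j(T_{\nn}(p_k))|\le\|T_{\nn}(f-p_k)\|\le\|f-p_k\|_\infty$; this lets us exchange the $F$-averages and the approximation. Passage from continuous to $L^1$ Hermitian $f$ relies on approximation $f_k\to f$ in $L^1$ norm together with the trace-norm estimate $\|T_{\nn}(f-f_k)\|_1\le s\prod_l n_l\cdot\|f-f_k\|_{L^1}$. Combined with the elementary bound
\begin{equation*}
\Bigl|\sum_{j}F(\lambda_j(A))-\sum_{j}F(\lambda_j(B))\Bigr|\le \mathrm{Lip}(F)\,\|A-B\|_1,
\end{equation*}
valid for Lipschitz $F\in\mathcal C_0(\mathbb R)$, this produces the desired limit after normalisation by $s\prod_l n_l$.

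The main obstacle is the $L^1$ extension: when $f$ is merely integrable, $T_{\nn}(f)$ can have unbounded norm as $\nn\to\infty$, so one cannot naively truncate to bounded spectral regions. I would handle this by first reducing to Lipschitz compactly supported $F$ via a standard regularisation of a general $F\in\mathcal C_0(\mathbb C)$, and then combining the trace-norm estimate above with a careful $\varepsilon$-argument: given $\varepsilon>0$, split $f=f_k+(f-f_k)$ with $\|f-f_k\|_{L^1}<\varepsilon$ and continuous $f_k$; the polynomial/continuous case applied to $f_k$ gives the integral term up to an error controlled by $\|f-f_k\|_{L^1}$ and $\mathrm{Lip}(F)$, and a dominated-convergence argument then handles the remaining integral piece. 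Once this limit exchange is justified, \eqref{distribution:sv-eig} for the pair $(f,[-\pi,\pi]^d)$ follows.
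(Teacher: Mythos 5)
The paper does not actually prove this statement: Theorem~\ref{szego-herm} is recalled from the literature with a citation to Tilli's note \cite{Tillinota}, so there is no in-paper proof to compare against. Your outline is, in substance, the standard proof of this Szeg\H{o}-type result and essentially the strategy of the cited reference: (i) for Hermitian trigonometric polynomial symbols, reduce to the moment identity and compute $\mathrm{tr}(T_{\nn}(f)^m)$ by counting closed walks, where the number of admissible starting indices is $\prod_l(n_l-c_l)$ with $0\le c_l\le mN$ depending on the walk, so the relative boundary error is $O(mN/\min_l n_l)$; (ii) pass to continuous Hermitian symbols by uniform approximation (e.g.\ Fej\'er means, which preserve Hermitianity) plus Weyl's inequality; (iii) pass to $L^1$ symbols via the trace-norm estimate for $T_{\nn}(\cdot)$ and the Lidskii-type bound for Lipschitz test functions, after first reducing $F\in\mathcal C_0$ to Lipschitz compactly supported $F$ by uniform approximation. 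You correctly identify the one genuine danger point — the moment method alone cannot handle merely integrable symbols because the spectra of $T_{\nn}(f)$ need not stay in a fixed compact set — and your fix (Lipschitz $F$ plus the normalized trace-norm continuity of $f\mapsto T_{\nn}(f)$) is exactly the right one. Two small polish items for a full write-up: the approximants $f_k$ must themselves be Hermitian so that both the matrix Lidskii bound and the eigenvalue functions on the symbol side make sense; and on the integral side of step (iii) the cleaner argument is the pointwise inequality $\sum_i|\lambda_i(f(\ttheta))-\lambda_i(f_k(\ttheta))|\le\|f(\ttheta)-f_k(\ttheta)\|_{\mathrm{tr}}$ integrated over $[-\pi,\pi]^d$, rather than an appeal to dominated convergence. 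With those details filled in, the argument is complete.
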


\subsection{Essentials of the GLT theory}\label{sub:GLT}

Both zero-distributed matrix-sequences and multilevel block Toeplitz sequences introduced in the previous subsections belong to a larger class of matrix-sequences known as \emph{generalized locally Toeplitz (GLT) class}. In short, the GLT class is an algebra virtually containing any sequence of matrices coming from ``reasonable'' approximations by local PDE discretization methods (finite differences, finite elements, isogeometric analysis, etc.). The GLT algebra is especially useful when nonconstant coefficients occur in the considered PDE. More details can be found in the seminal work \cite{Tilli-lt} focusing on the spectrum of one-dimensional differential operators and in \cite{glt,serra1} containing a generalization to multivariate differential operators (see also the books \cite{GS,GLT-bookII,GLT-bookIII}).

Without going into details of the GLT algebra, here we list some interesting properties of GLT sequences. We will use them in the context of preconditioning and for proving that a sequence of (multilevel block) Toeplitz matrices, up to low-rank corrections, is a GLT sequence whose symbol is not affected by the low-rank perturbation.

\begin{description}
	\item[GLT1] Each GLT sequence has a symbol $f(\xx,\ttheta)$ for $(\xx,\ttheta)\in [0,1]^d\times [-\pi,\pi]^d$ in the  singular value sense according to Definition \ref{def-distribution} with $q=2d$.
	If the sequence is Hermitian, then the symbol also holds in the eigenvalue sense.
	The variables $\xx\in [0,1]^d$ and $\ttheta\in [-\pi,\pi]^d$ are called space and Fourier variables, respectively.
	
	\item[GLT2] The set of GLT sequences form a $*$-algebra, i.e., it is closed under linear combinations, products, inversion (whenever the symbol is singular in, at most, a set of zero Lebesgue measure), and conjugation. Hence, the sequence obtained via algebraic operations on a finite set of given GLT sequences is still a GLT sequence and its symbol is  obtained by performing the same algebraic manipulations on the corresponding symbols of the input GLT sequences.
	
	\item[GLT3] Every Toeplitz sequence generated by a function $f\in L^1([-\pi,\pi]^d)$ is a GLT sequence and its symbol is $f$, with the specifications mentioned in {\bf GLT1}. Note that such $f$ only depends on Fourier variables $\ttheta\in [-\pi,\pi]^d$ and not on space variables $\xx\in [0,1]^d$.
	
	\item[GLT4] Every zero-distributed matrix-sequence is a GLT sequence with symbol~$0$. In particular, any sequence in which the rank divided by the size tends to zero as the matrix-size tends to infinity and any sequence with asymptotically infinitesimal spectral norm have symbol~$0$.
\end{description}

\section{Preliminaries on IgA discretizations}\label{sec:notation}

In this section, we introduce some preliminary IgA tools. We start by recalling the definition of (cardinal) B-splines together with some relevant properties. Then, we collect some spectral results on matrices involved in the IgA discretization of 1D elliptic problems, which will be important in the IgA discretization of the curl-div problem \eqref{eq:curl-div} discussed in Section \ref{sec:discr}.

\subsection{B-splines and cardinal B-splines}\label{sub:cardinal}
For $p\ge0$ and $n\ge1$, consider the uniform knot sequence
\begin{equation*}\label{eq:knots}
\xi_1=\cdots=\xi_{p+1}:=0<\xi_{p+2}<\cdots<\xi_{p+n}<1=:\xi_{p+n+1}=\cdots=\xi_{2p+n+1},
\end{equation*}
where
\begin{equation*}
\xi_{i+p+1}:=\frac{i}{n}, \quad i=0,\ldots,n.
\end{equation*}
This knot sequence allows us to define $n+p$ B-splines of degree $p$.
\begin{definition}
	The B-splines of degree $p$ over a uniform mesh of $[0,1]$, consisting of $n$ intervals, are denoted by
	\begin{equation*}
	N_{i}^p:[0,1]\rightarrow \mathbb{R}, \quad i=1,\ldots,n+p,
	\end{equation*}
	and defined recursively as follows: for $1 \le i\le n+2p$,
	\begin{equation*}
	N_i^0(x):=\begin{cases}
	1, & x \in [\xi_i,\xi_{i+1}), \\
	0, & \text{otherwise};
	\end{cases}
	\end{equation*}
	for $1\le k\le p$ and $1\le i\le n+2p-k$,
	\begin{equation*}
	N_i^k(x):=\frac{x-\xi_i}{\xi_{i+k}-\xi_i}N_{i}^{k-1}(x)+\frac{\xi_{i+k+1}-x}{\xi_{i+k+1}-\xi_{i+1}}N_{i+1}^{k-1}(x),
	\end{equation*}
	where a fraction with zero denominator is assumed to be zero.
\end{definition}
It is well known that the B-splines $N_{i}^p$, $i=1,\ldots,n+p$, form a basis and they enjoy the following properties (see, e.g., \cite{deBoor}).
\begin{itemize}
	\item Local support:
	\begin{equation*}
	{\rm supp}(N_{i}^p)=[\xi_i,\xi_{i+p+1}], \quad i=1,\ldots,n+p;
	\end{equation*}
	\item Smoothness:
	\begin{equation*}
	N_{i}^p \in \mathcal{C}^{p-1}(0,1), \quad i=1,\ldots,n+p;
	\end{equation*}
	\item Differentiation:
	\begin{equation*}
	\left(N_{i}^p(x)\right)' = p\left(\frac{N_{i}^{p-1}(x)}{\xi_{i+p}-\xi_i}-
	\frac{N_{i+1}^{p-1}(x)}{\xi_{i+p+1}-\xi_{i+1}}\right), \quad i=1,\ldots,n+p, \quad p \geq 1;
	\end{equation*}
	\item Nonnegative partition of unity:
	\begin{equation*}
	N_{i}^p(x)\ge0, \quad i=1,\ldots,n+p, \qquad \sum_{i=1}^{n+p}N_{i}^p(x)=1;
	\end{equation*}
	\item Vanishing at the boundary:  
	\begin{equation} \label{eq:spline-boundary}
	N_{i}^p(0)=N_{i}^p(1)=0, \quad i=2,\ldots,n+p-1.
	\end{equation}
\end{itemize}

The central B-splines $N_i^p$, $i=p+1,\ldots,n$, are uniformly shifted and scaled versions of a single shape function, the so-called cardinal B-spline $\phi_p:\mathbb{R}\rightarrow \mathbb{R}$,
\begin{equation*}
\phi_0(t) := \begin{cases}
1, & t \in [0, 1), \\
0, & \text{otherwise},
\end{cases}
\end{equation*}
and
\begin{equation*}
\phi_p (t) := \frac{t}{p} \phi_{p-1}(t) + \frac{p+1-t}{p} \phi_{p-1}(t-1), \quad p \geq 1.
\end{equation*}
More precisely, we have
\begin{equation*}
N^{p}_i(x) =\phi_{p}(nx-i+p+1), \quad i=p+1,\ldots,n,
\end{equation*}
and
\begin{equation*}
\left(N^{p}_i(x)\right)' =n\phi'_{p}(nx-i+p+1), \quad i=p+1,\ldots,n.
\end{equation*}
The cardinal B-spline has the following properties (see, e.g., \cite[Section3.1]{GaroniNM} and references therein).
\begin{itemize}
	\item Local support:
	\begin{equation*}
	{\rm supp}(\phi_p)=[0,p+1];
	\end{equation*}
	\item Smoothness:
	\begin{equation*}
	\phi_p \in \mathcal{C}^{p-1}(\mathbb{R});
	\end{equation*}
	\item Differentiation:
	\begin{equation*}
	{\phi}_p' (t) = \phi_{p-1}(t) - \phi_{p-1}(t-1), \quad p \geq 1;
	\end{equation*}
	\item Symmetry:
	\begin{equation*}
	\phi_p(t) = \phi_p(p+1-t);
	\end{equation*}
	\item Inner product: 
	\begin{equation*}
	\int_{\mathbb{R}} \phi_{p_1}^{(r_1)}(t) \phi_{p_2}^{(r_2)}(t+\tau) ~ {\rm d}t = (-1)^{r_1} \phi_{p_1+p_2+1}^{(r_1+r_2)}(p_1+1+\tau) = (-1)^{r_2} \phi_{p_1+p_2+1}^{(r_1+r_2)}(p_2+1-\tau).
	\end{equation*}
\end{itemize}

Finally, we recall the definition of tensor-product B-splines.
\begin{definition}
	The tensor-product B-splines of degrees $\pp:=(p_1,\ldots,p_d)$ over a uniform mesh of $[0,1]^d$, consisting of $\nn:=(n_1,\ldots,n_d)$ intervals in each direction, are denoted by
	\begin{equation*}
	N_{\ii}^{\pp}:[0,1]^d\rightarrow \mathbb{R}, \quad \ii=\bm{1},\ldots,\nn+\pp,
	\end{equation*}
	and defined as
	\begin{equation*}
	N_{\ii}^{\pp}(\xx):=\prod_{l=1}^d N_{i_l}^{p_l}(x_{l}), \quad \xx:=(x_1,\ldots,x_d)\in[0,1]^d,
	\end{equation*}
	where $\bm{1}:=(1,\ldots,1)\in\mathbb{N}^d$, $\ii:=(i_1,\ldots,i_d)\in\mathbb{N}^d$.
\end{definition}
We define the tensor-product spline space $\mathbb{S}^{\bm{p}}_{\nn}$ as
\begin{equation}\label{eq:tensor2D}
\mathbb{S}^{\bm{p}}_{\nn}:=\vspan\left\{N_{\ii}^{\pp}:\ii=\bm{2},\ldots,\bm{n+p-1}\right\}.
\end{equation}
Note that all the elements of this space vanish at the boundary of $[0,1]^d$;
see \eqref{eq:spline-boundary};
hence, the space is suited for dealing with homogeneous Dirichlet boundary conditions.
In the remainder of this paper, we will focus on the spline spaces $\mathbb{S}^{p}_{n}$, $\mathbb{S}^{p,p}_{n_1,n_2}$ and $\mathbb{S}^{p,p,p}_{n_1,n_2,n_3}$ in 1D, 2D and 3D, respectively.

\subsection{IgA mass, advection, and stiffness matrices}\label{sub:IgAmatr}
In the context of IgA discretization of elliptic problems, we often deal with the following mass, advection, and stiffness matrices
\begin{align}
M^p_n &:= \left[\int_0^1 N_{i+1}^p(x) ~ N_{j+1}^p(x) ~{\rm d}x\right]_{i, j=1}^{n+p-2}, \label{eq:matr_M}
\\
A^p_n &:= \left[\int_0^1 N_{i+1}^p(x) ~ \left(N_{j+1}^p(x)\right)' ~{\rm d}x\right]_{i, j=1}^{n+p-2},\label{eq:matr_A}
\\
S^p_n &:= \left[\int_0^1 \left(N_{i+1}^p(x)\right)' ~ \left(N_{j+1}^p(x)\right)' ~{\rm d}x\right]_{i, j=1}^{n+p-2}.
\label{eq:matr_S}
\end{align}
The matrices $M^p_n$ and $S^p_n$ are symmetric, and $A^p_n$ is skew-symmetric. Furthermore, using the results of Section \ref{sub:cardinal}, the central parts of these matrices can be expressed as
\begin{align*}
(M^p_n)_{i,j} &= \frac{1}{n}~ \phi_{2p+1}(p+1-(i-j)),
\\
(A^p_n)_{i,j} &= - \, {\phi}'_{2p+1}(p+1-(i-j)),
\\
(S^p_n)_{i,j} &= -\, n {\phi}''_{2p+1}(p+1-(i-j)),
\end{align*}
for $i,j=2p,\ldots,n-p-1$. This means that they are Toeplitz matrices up to a low-rank correction, and the following results on the spectral distribution of mass, advection, and stiffness matrix-sequences hold \cite{GaroniNM,MC-collocation}. For completeness, we provide a compact proof based on the GLT theory.
%
\begin{theorem} \label{thm:spline-distr}
	It holds that
	\begin{enumerate}
		\item[\mylabel{eq:Mdistr}{(a)}]
		$\{n M^p_n\}_n \sim_{\lambda}(\mfm_p,[-\pi,\pi])$, where the symbol $\mfm_p$ is given by
		\begin{equation*}
		\mfm_p(\theta) := \phi_{2p+1}(p+1) + 2 \sum_{k=1}^p \phi_{2p+1}(p+1-k) \cos(k \theta);
		\end{equation*}
		\item[\mylabel{eq:Adistr}{(b)}]
		$\{-{\rm i} A^p_n\}_n \sim_{\lambda}(\mfa_p,[-\pi,\pi])$, where the symbol $\mfa_p$ is given by
		\begin{equation*}
		\mfa_p(\theta) := - 2 \sum_{k=1}^p {\phi}'_{2p+1}(p+1-k) \sin(k \theta);
		\end{equation*}
		\item[\mylabel{eq:Sdistr}{(c)}]
		$\{\frac{1}{n} S^p_n\}_n \sim_{\lambda}(\mfs_p,[-\pi,\pi])$, where the symbol $\mfs_p$ is given by
		\begin{equation*}
		\mfs_p(\theta) := - {\phi}''_{2p+1}(p+1) - 2 \sum_{k=1}^p {\phi}''_{2p+1}(p+1-k) \cos(k \theta).
		\end{equation*}
	\end{enumerate}
\end{theorem}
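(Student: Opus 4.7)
The plan is to express each of the three normalized matrix-sequences as a pure Toeplitz sequence plus a low-rank correction, and then invoke the GLT machinery recalled in Section~\ref{sub:GLT}. First I would identify the Toeplitz generating functions. Using the inner-product identity for cardinal B-splines from Section~\ref{sub:cardinal} with $p_1=p_2=p$ and shift $\tau=i-j$, the central entries of $nM^p_n$ are $\phi_{2p+1}(p+1-(i-j))$, and analogous formulas (up to the scalings indicated in the statement) hold for $-{\rm i}A^p_n$ and $\tfrac{1}{n}S^p_n$. Because $\phi_{2p+1}$ is supported on $[0,2p+2]$, these coefficients vanish for $|i-j|>p$, so the resulting formal series $\sum_k \hat c_k\,{\rm e}^{{\rm i}k\theta}$ is a trigonometric polynomial. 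Combining this with the symmetry $\phi_{2p+1}(t)=\phi_{2p+1}(2p+2-t)$, together with its consequences $\phi'_{2p+1}(p+1-k)=-\phi'_{2p+1}(p+1+k)$ and $\phi''_{2p+1}(p+1-k)=\phi''_{2p+1}(p+1+k)$, one checks that these three polynomials collapse respectively to $\mfm_p$, $\mfa_p$, and $\mfs_p$ as displayed. The candidate Toeplitz matrices are then $T_{n+p-2}(\mfm_p)$, $T_{n+p-2}(\mfa_p)$, and $T_{n+p-2}(\mfs_p)$, and by construction they agree with the normalized IgA matrices whenever both $i$ and $j$ lie in the central range $\{2p,\ldots,n-p-1\}$.

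Next I would bound the discrepancy. The entries where the central formula fails are exactly those with $i$ or $j$ outside $\{2p,\ldots,n-p-1\}$, i.e.\ at most $O(p)$ boundary rows and $O(p)$ boundary columns. Hence $nM^p_n - T_{n+p-2}(\mfm_p)$ has rank uniformly bounded by a constant depending only on $p$, and the same is true for the two analogous differences. As the matrix dimension $n+p-2$ grows with $n$, the rank divided by the size tends to zero, so by property \textbf{GLT4} each correction is a zero-distributed GLT sequence with symbol $0$. By \textbf{GLT3}, the three Toeplitz sequences $\{T_{n+p-2}(\mfm_p)\}_n$, $\{T_{n+p-2}(\mfa_p)\}_n$, $\{T_{n+p-2}(\mfs_p)\}_n$ are GLT with symbols $\mfm_p$, $\mfa_p$, $\mfs_p$, and the algebra property \textbf{GLT2} then transfers these symbols to $\{nM^p_n\}_n$, $\{-{\rm i}A^p_n\}_n$, $\{\tfrac{1}{n}S^p_n\}_n$ in the singular-value sense.

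To upgrade to the eigenvalue sense, I would use Hermitianness: $M^p_n$ and $S^p_n$ are real symmetric by construction, while $A^p_n$ is real skew-symmetric so $-{\rm i}A^p_n$ is Hermitian. Property \textbf{GLT1} therefore promotes the singular value distribution to an eigenvalue distribution, yielding parts~\ref{eq:Mdistr}, \ref{eq:Adistr}, and \ref{eq:Sdistr}.

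The main obstacle I expect is purely bookkeeping: carefully identifying the set of boundary indices where the Toeplitz pattern is broken (so as to control the rank of the correction uniformly in $n$), and correctly tracking the parities induced by differentiating the symmetric kernel $\phi_{2p+1}$ so that the formal Fourier series cleanly reduce to the cosine, sine, and cosine expansions defining $\mfm_p$, $\mfa_p$, $\mfs_p$.
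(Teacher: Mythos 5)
Your proposal is correct and follows essentially the same route as the paper's proof: write each normalized matrix as a Toeplitz matrix generated by the stated trigonometric polynomial plus a correction of rank bounded by a constant depending only on $p$, then apply \textbf{GLT3} to the Toeplitz part, \textbf{GLT4} to the low-rank part, \textbf{GLT2} to combine, and \textbf{GLT1} together with Hermitianness of $nM^p_n$, $-{\rm i}A^p_n$, $\tfrac{1}{n}S^p_n$ to pass from singular values to eigenvalues. The only difference is that you spell out the derivation of the generating functions from the cardinal B-spline inner-product and symmetry identities, which the paper compresses into ``by direct inspection.''
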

\begin{proof}
	Let $m:=n+p-2$. By direct inspection, we observe that
	\begin{equation*}
	n M^p_n = T_m( \mfm_p) + R_m^{\rm M},
	\quad
	-{\rm i} A^p_n =  T_m( \mfa_p) + R_m^{\rm A},
	\quad
	\frac{1}{n} S^p_n =  T_m( \mfa_p) + R_m^{\rm S},
	\end{equation*}
	with $\rank(R_m^{\rm M})$, $\rank(R_m^{\rm A})$, $\rank(R_m^{\rm S})\leq 4p$.
	Now, by {\bf GLT3}, the sequences $\{T_m(\mfm_p)\}_n$,  $\{T_m(\mfa_p)\}_n$,
	$\{T_m(\mfs_p)\}_n$ are GLT sequences with symbols $\mfm_p$, $\mfa_p$, $\mfs_p$, respectively,
	while by {\bf GLT4} the sequences  $\{R_{m}^{\rm M}\}_n$,  $\{R_{m}^{\rm A}\}_n$,
	$\{R_{m}^{\rm M}\}_n$ are GLT sequences with symbol $0$, due to their low rank.
	As a consequence of {\bf GLT2}, also the sequences $\{n M^p_n\}_n$, $\{-{\rm i} A^p_n\}_n$, $\{\frac{1}{n} S^p_n\}_n$ are GLT sequences with symbols $\mfm_p$, $\mfa_p$, $\mfs_p$, respectively. Since all the involved matrices are Hermitian, {\bf GLT1} concludes the proof.
\end{proof}

The symbols $\mfm_p$, $\mfa_p$, and $\mfs_p$ enjoy the following properties (see \cite{GaroniNM,SINUM-IgA,MC-collocation,lusin}).
\begin{lemma} \label{lem:h-f-e}
	It holds that
	\begin{enumerate}
		\item[\mylabel{eq:mp}{(a)}] $\mfm_p(\theta)>0$ for all $\theta\in[-\pi,\pi]$ and $\left(2/{\pi}\right)^{2p}\leq\mfm_{p}(\theta)\leq\mfm_{p}(0)=1$;
		\item[\mylabel{eq:sp}{(b)}] $\mfs_p(\theta)=\mfm_{p-1}(\theta)(2-2\cos(\theta))\ge0$ for all $\theta\in[-\pi,\pi]$ and $\mfs_p(\pi)\leq 2^{2-p}{\max_{\theta\in[0,\pi]}\mfs_p(\theta)}$;
		\item[\mylabel{eq:ap}{(c)}] $\mfm_p(\theta)\mfs_p(\theta)\geq(\mfa_p(\theta))^2$ for all $\theta\in[-\pi,\pi]$, with only equality at $\theta=0$.
	\end{enumerate}
\end{lemma}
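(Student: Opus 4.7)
My plan is to unify all three parts via the Parseval/Poisson representation of the symbols in terms of $|\hat{\phi}_p|^2$, where $\hat{\phi}_p(\omega) = \bigl(\frac{1-{\rm e}^{-{\rm i}\omega}}{{\rm i}\omega}\bigr)^{p+1}$ is the Fourier transform of the cardinal B-spline, so that $|\hat{\phi}_p(\omega)|^2 = |\sin(\omega/2)/(\omega/2)|^{2p+2}$. Using the inner-product identity of Section~\ref{sub:cardinal} to rewrite
\[
\phi_{2p+1}(p+1-k) = \int \phi_p(t)\phi_p(t-k)\,{\rm d}t, \quad -\phi_{2p+1}''(p+1-k) = \int \phi_p'(t)\phi_p'(t-k)\,{\rm d}t,
\]
\[
-\phi_{2p+1}'(p+1-k) = \int \phi_p'(t)\phi_p(t-k)\,{\rm d}t,
\]
Poisson summation yields
\begin{align*}
\mfm_p(\theta) &= \sum_{m \in \mathbb{Z}} |\hat{\phi}_p(\theta + 2\pi m)|^2, \\
\mfs_p(\theta) &= \sum_{m \in \mathbb{Z}} (\theta + 2\pi m)^2\, |\hat{\phi}_p(\theta + 2\pi m)|^2, \\
\mfa_p(\theta) &= -\sum_{m \in \mathbb{Z}} (\theta + 2\pi m)\, |\hat{\phi}_p(\theta + 2\pi m)|^2.
\end{align*}

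Part \ref{eq:mp} then drops out: for $\theta \in (-\pi,\pi]$ the $m=0$ term $|\hat{\phi}_p(\theta)|^2$ is strictly positive, so $\mfm_p(\theta) > 0$, and $\mfm_p(0) = |\hat{\phi}_p(0)|^2 = 1$ is the global maximum because the cosine series defining $\mfm_p$ has non-negative coefficients $\phi_{2p+1}(p+1-k)$. For the lower bound $(2/\pi)^{2p}$, retaining only $m = 0$ and $m = -1$ at $\theta = \pi$, where $|\hat{\phi}_p(\pm\pi)|^2 = (2/\pi)^{2p+2}$, produces the correct order, and an elementary decay check on the Poisson series shows that the minimum of $\mfm_p$ over $[-\pi,\pi]$ is attained at $\theta = \pi$.

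For part \ref{eq:sp}, applying the differentiation identity $\phi_q'(t) = \phi_{q-1}(t) - \phi_{q-1}(t-1)$ twice gives
\[
-\phi_{2p+1}''(p+1-k) = 2\phi_{2p-1}(p-k) - \phi_{2p-1}(p-k+1) - \phi_{2p-1}(p-k-1),
\]
which is exactly the $k$-th Fourier coefficient of $(2-2\cos\theta)\mfm_{p-1}(\theta)$; the factorization $\mfs_p = (2-2\cos\theta)\mfm_{p-1}$ and non-negativity follow at once from \ref{eq:mp}. For $\mfs_p(\pi) \leq 2^{2-p}\max_{[0,\pi]}\mfs_p$, use $\mfs_p(\pi) = 4\mfm_{p-1}(\pi)$ together with an upper bound $\mfm_{p-1}(\pi) \lesssim (2/\pi)^{2(p-1)}$ from the Poisson expansion, and a lower bound of the form $\max_{[0,\pi]}\mfs_p \geq \mfs_p(\pi/2) = 2\mfm_{p-1}(\pi/2) \gtrsim 1$ obtained by keeping only the $m=0$ contribution at $\pi/2$. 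Producing the exponential factor $2^{2-p}$ with clean constants is, in my view, the main technical obstacle here, since the estimates have to be tight enough not to lose powers of $p$.

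For part \ref{eq:ap}, I apply Cauchy--Schwarz in $\ell^2(\mathbb{Z})$ to the sequences $a_m := |\hat{\phi}_p(\theta+2\pi m)|$ and $b_m := (\theta+2\pi m)|\hat{\phi}_p(\theta+2\pi m)|$. By the three Poisson identities, $\|a\|^2 = \mfm_p(\theta)$, $\|b\|^2 = \mfs_p(\theta)$, and $\langle a,b\rangle = -\mfa_p(\theta)$, hence $\mfm_p(\theta)\,\mfs_p(\theta) \geq \mfa_p(\theta)^2$. Equality forces the collinearity $b_m = \lambda a_m$ on the support of $a$; since the zeros of $\hat{\phi}_p$ are precisely the nonzero integer multiples of $2\pi$, for $\theta \in (-\pi,\pi]\setminus\{0\}$ every $a_m$ is nonzero and the condition $\theta + 2\pi m \equiv \lambda$ is impossible, whereas at $\theta = 0$ only the $m=0$ term survives and both sides vanish, giving the equality case.
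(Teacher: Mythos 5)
The paper itself does not prove Lemma \ref{lem:h-f-e}: it imports it from \cite{GaroniNM,SINUM-IgA,MC-collocation,lusin}, and the Poisson/Parseval representation of $\mfm_p$, $\mfa_p$, $\mfs_p$ through $|\hat\phi_p(\theta+2\pi m)|^2$ that you set up is exactly the device used in those references, so your route is the standard one. The factorization in part \ref{eq:sp} and all of part \ref{eq:ap} are complete as written: the second-difference computation of $-\phi_{2p+1}''(p+1-k)$ does match the Fourier coefficients of $(2-2\cos\theta)\mfm_{p-1}$, and the Cauchy--Schwarz argument, together with the observation that $\hat\phi_p$ vanishes only on $2\pi\mathbb{Z}\setminus\{0\}$ (so every $a_m\neq 0$ when $\theta\in[-\pi,\pi]\setminus\{0\}$ and collinearity of $(a_m)$ and $(b_m)$ is impossible), settles the equality case correctly.

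The two quantitative claims are where the sketch does not close. For the lower bound in \ref{eq:mp}, the two retained terms at $\theta=\pi$ give $2(2/\pi)^{2p+2}=\frac{8}{\pi^2}(2/\pi)^{2p}$, which is strictly \emph{below} $(2/\pi)^{2p}$; worse, the exact value $\mfm_p(\pi)=(2/\pi)^{2p+2}\cdot 2\sum_{m\ge 0}(2m+1)^{-(2p+2)}$ equals $1/3<4/\pi^2$ for $p=1$, so the constant as printed cannot be obtained by any argument --- the exponent should be $2p+2$, in which case the single $m=0$ term $(\sin(\theta/2)/(\theta/2))^{2p+2}\ge(2/\pi)^{2p+2}$ already suffices, and your unproved claim that the minimum sits at $\theta=\pi$ (not obvious from the series, since the $m\neq 0$ terms are increasing on $[0,\pi]$) becomes unnecessary. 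For $\mfs_p(\pi)\le 2^{2-p}\max_{[0,\pi]}\mfs_p$, bounding $\mfm_{p-1}(\pi)$ from above and $\mfm_{p-1}(\pi/2)$ from below separately loses a factor $\pi^2/8>1$, and there is no slack to lose: at $p=3$ one has $\mfs_3(\pi)=8/15\approx 0.533$ against $2^{-1}\max\mfs_3\approx 0.540$, so the inequality is nearly an equality. The estimate-free fix inside your framework is the identity $\mfm_q(\pi)=2^{-q}\mfm_q(\pi/2)$, which drops out of the Poisson series because at $\theta=\pi$ and $\theta=\pi/2$ the sine factors are constant ($1$ and $1/\sqrt{2}$) and both lattice sums reduce, up to explicit powers of two, to $\sum_{j\ \mathrm{odd}}j^{-(2q+2)}$; then $\mfs_p(\pi)=4\mfm_{p-1}(\pi)=2^{2-p}\cdot 2\,\mfm_{p-1}(\pi/2)=2^{2-p}\mfs_p(\pi/2)\le 2^{2-p}\max_{[0,\pi]}\mfs_p$ with no estimation at all.
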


As a result of Lemma~\ref{lem:h-f-e}, the function $\mfs_p(\theta)$ has a unique zero of order $2$ at $\theta=0$ (like the function $2-2\cos(\theta)$). In addition, there is an exponential decay in $p$ at $\theta=\pm\pi$, implying that ${\mfs_p(\theta)}$ becomes very small and behaves like a ``numerical zero'' at $\theta=\pm\pi$ for large $p$.

\section{IgA discretization of the curl-div operator}\label{sec:discr}

In this section, we detail the IgA discretization of \eqref{eq:curl-div} considering homogeneous Dirichlet conditions on the boundary of $\Omega=(0,1)^d$, $d=2,3$. More precisely, we describe the corresponding matrices obtained by using tensor-product B-splines in 2D and 3D.

\subsection{2D case}

In the 2D case, the approximation space is given by
\begin{equation*}
\mathbb{V}_h:=\begin{bmatrix}
\mathbb{S}^{p,p}_{n_1,n_2}\\[0.2cm] \mathbb{S}^{p,p}_{n_1,n_2}
\end{bmatrix}
=\vspan\left\{\ppsi^{p,1}_{i_1,i_2},\ppsi^{p,2}_{j_1,j_2}:i_l,j_l=2,\ldots,n_l+p-1; \ l=1,2\right\},
\end{equation*}
where $\mathbb{S}^{p,p}_{n_1,n_2}$ is defined in \eqref{eq:tensor2D} and
\begin{equation*}
\ppsi^{p,1}_{i_1,i_2}:=\begin{bmatrix}
N^{p}_{i_1}(x_1)N^{p}_{i_2}(x_2) \\[0.2cm] 0
\end{bmatrix}, \quad
\ppsi^{p,2}_{j_1,j_2}:=\begin{bmatrix}
0 \\[0.2cm] N^{p}_{j_1}(x_1)N^{p}_{j_2}(x_2)
\end{bmatrix}.
\end{equation*}
Expanding $\uu_h$ in terms of the B-spline representation,
\begin{equation*}\label{eq:u_expansion}
\uu_h=
\begin{bmatrix}
u^1_h \\[0.2cm] u^2_h
\end{bmatrix}=
\begin{bmatrix}
\sum_{i_1=2}^{n_1+p-1}\sum_{i_2=2}^{n_2+p-1} u^1_{i_1,i_2} N^{p}_{i_1}(x_1)N^{p}_{i_2}(x_2) \\[0.2cm]
\sum_{j_1=2}^{n_1+p-1}\sum_{j_2=2}^{n_2+p-1} u^2_{j_1,j_2} N^{p}_{j_1}(x_1)N^{p}_{j_2}(x_2)
\end{bmatrix},
\end{equation*}
and choosing $\vv_h=\ppsi^{p,1}_{k_1,k_2}$, we obtain
\begin{align}\label{eq:curl1}
\notag(\Curl\uu_h,\Curl \ppsi^{p,1}_{k_1,k_2})&=
-\sum_{j_1=2}^{n_1+p-1}\sum_{j_2=2}^{n_2+p-1} u^2_{j_1,j_2} \int_{\Omega} \left(N^{p}_{j_1}(x_1)\right)'N^{p}_{j_2}(x_2)N^{p}_{k_1}(x_1) \left(N^{p}_{k_2}(x_2)\right)'  {\rm d}x_1 {\rm d}x_2 \\
\notag& \quad +
\sum_{i_1=2}^{n_1+p-1}\sum_{i_2=2}^{n_2+p-1} u^1_{i_1,i_2} \int_{\Omega} N^{p}_{i_1}(x_1)\left(N^{p}_{i_2}(x_2)\right)'
N^{p}_{k_1}(x_1)\left(N^{p}_{k_2}(x_2)\right)'{\rm d}x_1 {\rm d}x_2\\
&=-\sum_{j_1=2}^{n_1+p-1}\sum_{j_2=2}^{n_2+p-1}u^2_{j_1,j_2}((A^{p}_{n_1})^T\otimes A^{p}_{n_2})_{k_1,k_2,j_1,j_2}+ \sum_{i_1=2}^{n_1+p-1}\sum_{i_2=2}^{n_2+p-1}u^1_{i_1,i_2}(M^{p}_{n_1}\otimes S^{p}_{n_2})_{k_1,k_2,i_1,i_2},
\end{align}
where $M^{p}_{n_1}$, $A^{p}_{n_1}$, $A^{p}_{n_2}$ and $S^{p}_{n_2}$ are defined in \eqref{eq:matr_M},  \eqref{eq:matr_A} and  \eqref{eq:matr_S}.
Equation \eqref{eq:curl1} can be compactly expressed as
\begin{equation}\label{eq:eq_1}
M_{n_1}^{p}\otimes S_{n_2}^p\,\ruu^1 -(A^{p}_{n_1})^T\otimes A^{p}_{n_2}\,\ruu^2,
\end{equation}
with $\ruu^1:=[u^1_{2,2},\ldots,u^1_{n_1+p-1,n_2+p-1}]^T$ and $\ruu^2:=[u^2_{2,2},\ldots,u^2_{n_1+p-1,n_2+p-1}]^T$.
On the other hand, if we choose $\vv_h=\ppsi^{p,2}_{l_1,l_2}$, then we obtain
\begin{align}\label{eq:curl2}
\notag(\Curl \uu_h,\Curl \ppsi^{p,2}_{l_1,l_2})&=
\sum_{j_1=2}^{n_1+p-1}\sum_{j_2=2}^{n_2+p-1} u^2_{j_1,j_2} \int_{\Omega} \left(N^{p}_{j_1}(x_1)\right)'N^{p}_{j_2}(x_2)\left(N^{p}_{l_1}(x_1)\right)' N^{p}_{l_2}(x_2)  \,{\rm d}x_1 {\rm d}x_2 \\
\notag & \quad
-\sum_{i_1=2}^{n_1+p-1}\sum_{i_2=2}^{n_2+p-1} u^1_{i_1,i_2} \int_{\Omega} N^{p}_{i_1}(x_1)\left(N^{p}_{i_2}(x_2)\right)'
\left(N^{p}_{l_1}(x_1)\right)'N^{p}_{l_2}(x_2)\,{\rm d}x_1 {\rm d}x_2\\
&=\sum_{j_1=2}^{n_1+p-1}\sum_{j_2=2}^{n_2+p-1}u^2_{j_1,j_2}(S^{p}_{n_1}\otimes M^{p}_{n_2})_{l_1,l_2,j_1,j_2}- \sum_{i_1=2}^{n_1+p-1}\sum_{i_2=2}^{n_2+p-1}u^1_{i_1,i_2}(A^{p}_{n_1}\otimes (A^{p}_{n_2})^T)_{l_1,l_2,i_1,i_2},
\end{align}
where $M^{p}_{n_2}$, $A^{p}_{n_1}$, $A^{p}_{n_2}$ and $S^{p}_{n_1}$ are defined in \eqref{eq:matr_M},  \eqref{eq:matr_A} and \eqref{eq:matr_S}.
We can compactly express equation \eqref{eq:curl2} as
\begin{equation}\label{eq:eq_2}
S_{n_1}^{p}\otimes M_{n_2}^{p}\,\ruu^2 -A^{p}_{n_1}\otimes (A^{p}_{n_2})^T\,\ruu^1.
\end{equation}
Putting together equations \eqref{eq:eq_1} and \eqref{eq:eq_2}, we arrive at the following $2\times2$ block matrix
\begin{equation}\label{eq:matr_Acurl}
\mathcal{A}^{p,{\rm curl}}_{\nn}:=\begin{bmatrix}
M^{p}_{n_1}\otimes S^p_{n_2} & -(A^{p}_{n_1})^T\otimes A^p_{n_2} \\
-A^{p}_{n_1}\otimes (A^p_{n_2})^T & S^{p}_{n_1}\otimes M^{p}_{n_2}
\end{bmatrix}.
\end{equation}
Such a matrix is the result of the IgA discretization of the curl-curl operator $(\Curl \cdot, \Curl\cdot)$ appearing in \eqref{eq:curl-div}. Similarly, the matrix arising from the IgA discretization of the divergence term $(\Div,\Div)$ can be written as
\begin{equation}\label{eq:matr_Adiv}
\mathcal{A}^{p,{\rm div}}_{\nn}:=\begin{bmatrix}
S^{p}_{n_1}\otimes M^p_{n_2} & A^p_{n_1}\otimes (A^p_{n_2})^T \\
(A^p_{n_1})^T\otimes A^p_{n_2} & M^{p}_{n_1}\otimes S^{p}_{n_2}
\end{bmatrix}.
\end{equation}
Therefore, the complete IgA discretization of \eqref{eq:curl-div} for $d=2$ 
leads to the following $2\times2$ block symmetric coefficient matrix
\begin{equation}\label{eq:matr_Amu}
\mathcal{A}^{p,\alpha,\beta}_{\nn}:=\alpha\mathcal{A}^{p,{\rm curl}}_{\nn}
+ \beta\mathcal{A}^{p,{\rm div}}_{\nn}=\alpha\begin{bmatrix}
M^{p}_{n_1}\otimes S^p_{n_2} & -(A^{p}_{n_1})^T\otimes A^p_{n_2} \\
-A^{p}_{n_1}\otimes (A^p_{n_2})^T & S^{p}_{n_1}\otimes M^{p}_{n_2}
\end{bmatrix}+\beta\begin{bmatrix}
S^{p}_{n_1}\otimes M^p_{n_2} & A^p_{n_1}\otimes (A^p_{n_2})^T \\
(A^p_{n_1})^T\otimes A^p_{n_2} & M^{p}_{n_1}\otimes S^{p}_{n_2}
\end{bmatrix}.
\end{equation}

\subsection{3D case}
In the 3D case, the approximation space is given by
\begin{eqnarray*}
	\mathbb{V}_h:=\begin{bmatrix}
		\mathbb{S}^{p,p,p}_{n_1,n_2,n_3}\\[0.2cm] \mathbb{S}^{p,p,p}_{n_1,n_2,n_3}\\[0.2cm] \mathbb{S}^{p,p,p}_{n_1,n_2,n_3}
	\end{bmatrix}
	=\vspan\left\{\ppsi^{p,1}_{i_1,i_2,i_3},\ppsi^{p,2}_{j_1,j_2,j_3},\ppsi^{p,1}_{k_1,k_2,k_3}: i_l,j_l,k_l=2,\ldots,n_l+p-1; \ l=1,2,3\right\},
\end{eqnarray*}
where $\mathbb{S}^{p,p,p}_{n_1,n_2,n_3}$ is defined in \eqref{eq:tensor2D} and
\begin{align*}
\ppsi^{p,1}_{i_1,i_2,i_3}:=\left[\begin{smallmatrix}
N^{p}_{i_1}(x_1)N^{p}_{i_2}(x_2)N^{p}_{i_3}(x_3)\\ 0 \\ 0
\end{smallmatrix}\right], \quad
\ppsi^{p,2}_{j_1,j_2,j_3}:=\left[\begin{smallmatrix}
0 \\ N^{p}_{j_1}(x_1)N^{p}_{j_2}(x_2)N^{p}_{j_3}(x_3)\\0
\end{smallmatrix}\right], \quad
\ppsi^{p,3}_{k_1,k_2,k_3}:=\left[\begin{smallmatrix}
0 \\ 0 \\ N^{p}_{k_1}(x_1)N^{p}_{k_2}(x_2)N^{p}_{k_3}(x_3)
\end{smallmatrix}\right].
\end{align*}
Following the same reasoning as in the 2D case, we obtain here the $3\times3$ block symmetric matrix
\begin{align}\label{eq:matr_Amu3D}
\begin{split}
&\mathcal{A}^{p,\alpha,\beta}_{\nn}:=\\
&\alpha\left[\begin{smallmatrix}
M^{p}_{n_1}\otimes M^{p}_{n_2}\otimes S^p_{n_3}+
M^{p}_{n_1}\otimes S^{p}_{n_2}\otimes M^p_{n_3}
& -(A^{p}_{n_1})^T\otimes A^p_{n_2} \otimes M^p_{n_3}
& -(A^{p}_{n_1})^T\otimes M^p_{n_2} \otimes A^p_{n_3} \\
-A^{p}_{n_1}\otimes (A^p_{n_2})^T\otimes M^p_{n_3}
& 
S^{p}_{n_1}\otimes M^{p}_{n_2}\otimes M^p_{n_3}+
M^{p}_{n_1}\otimes M^{p}_{n_2}\otimes S^p_{n_3}
& -M^{p}_{n_1}\otimes (A^p_{n_2})^T \otimes A^p_{n_3}\\
-A^{p}_{n_1}\otimes M^p_{n_2} \otimes (A^p_{n_3})^T
& -M^{p}_{n_1}\otimes A^p_{n_2} \otimes (A^p_{n_3})^T
& 
S^{p}_{n_1}\otimes M^{p}_{n_2}\otimes M^p_{n_3}+
M^{p}_{n_1}\otimes S^{p}_{n_2}\otimes M^p_{n_3}
\end{smallmatrix}\right]\\
&+\beta\left[\begin{smallmatrix}
S^{p}_{n_1}\otimes M^p_{n_2}\otimes M^p_{n_3} & A^p_{n_1}\otimes (A^p_{n_2})^T\otimes M^p_{n_3}
& A^p_{n_1}\otimes  M^p_{n_2}\otimes (A^p_{n_3})^T\\
(A^{p}_{n_1})^T\otimes A^p_{n_2} \otimes M^p_{n_3} & M^{p}_{n_1}\otimes S^{p}_{n_2} \otimes M^p_{n_3}
& M^{p}_{n_1}\otimes A^p_{n_2} \otimes (A^p_{n_3})^T\\
(A^{p}_{n_1})^T\otimes M^p_{n_2} \otimes A^p_{n_3} & M^{p}_{n_1}\otimes (A^p_{n_2})^T \otimes A^p_{n_3}
&M^{p}_{n_1}\otimes M^{p}_{n_2} \otimes S^p_{n_3}
\end{smallmatrix}\right],
\end{split}
\end{align}
where $M^{p}_{n_i}$, $A^{p}_{n_i}$ and $S^{p}_{n_i}$ are defined in \eqref{eq:matr_M}, \eqref{eq:matr_A} and \eqref{eq:matr_S}.

\section{Spectral analysis of matrices $\mathcal{A}^{p,\alpha,\beta}_{\nn}$}\label{sec:spectral}

We are interested in the spectral behavior of the matrices $\mathcal{A}^{p,\alpha,\beta}_{\nn}$  defined in \eqref{eq:matr_Amu} and \eqref{eq:matr_Amu3D} for $d=2,3$, respectively.
Therefore, in this section, we analyze the spectral distribution of the matrix-sequences $\bigl\{n^{d-2}\mathcal{A}^{p,\alpha,\beta}_{\nn}\bigr\}_{\nn}$ where $\nn=n{\bm \nu}$ for increasing $n\in\mathbb{N}$ and fixed ${\bm\nu}\in\mathbb{Q}^d$ with $d=2,3$. In particular, we prove that
\begin{equation}\label{eq:distr}
\left\{n^{d-2}\mathcal{A}^{p,\alpha,\beta}_{\nn}\right\}_{\nn} \sim_\lambda (f^{p,\alpha,\beta}, [-\pi,\pi]^d), \quad d=2,3,
\end{equation}
for a specific $d\times d$ matrix-valued function $f^{p,\alpha,\beta}$; see Theorems \ref{thm:distr2D} and \ref{thm:distr3D}. As in Section \ref{sec:discr}, we start with the 2D case and then extend all our spectral findings to the 3D case. Finally, we end with some numerical experiments.

\subsection{2D case}\label{sub:distr2D}
Let us set $\mm:=(m_1,m_2):=(n_1+p-2,n_2+p-2)$ and $\nn:=(n_1,n_2)$, so the dimension of $\mathcal{A}^{p,\alpha,\beta}_{\nn}$ in \eqref{eq:matr_Amu} equals $2m_1m_2$.
In order to show the spectral distribution in \eqref{eq:distr} for $d=2$, we first prove that the matrix $\mathcal{A}^{p,\alpha,\beta}_{\nn}$ can be seen as a permutation of a $2$-level block Toeplitz matrix up to a low-rank correction. Then, by applying the GLT theory, we arrive at the following spectral result.

\begin{theorem}\label{thm:distr2D}
	For $d=2$ and $\nn:=(n_1,n_2)=(n\nu_1,n\nu_2)$, the symbol in \eqref{eq:distr} is given by
	\begin{equation}\label{eq:f2D}
	f^{p,\alpha,\beta}:[-\pi,\pi]^2\rightarrow \mathbb{C}^{2\times2}, \quad
	f^{p,\alpha,\beta}(\theta_1, \theta_2) :=\begin{bmatrix}
	f^{p,\alpha,\beta}_{1,1}(\theta_1, \theta_2) & f^{p,\alpha,\beta}_{1,2}(\theta_1, \theta_2)\\
	f^{p,\alpha,\beta}_{2,1}(\theta_1, \theta_2) & f^{p,\alpha,\beta}_{2,2}(\theta_1, \theta_2)
	\end{bmatrix},
	\end{equation}
	where
	\begin{align*}
	f^{p,\alpha,\beta}_{1,1}(\theta_1, \theta_2)&:=\alpha\frac{\nu_2}{\nu_1}\mfm_{p}(\theta_1)\mfs_{p}(\theta_2)+ \beta\frac{\nu_1}{\nu_2}\mfs_{p}(\theta_1)\mfm_{p}(\theta_2),\\
	f^{p,\alpha,\beta}_{1,2}(\theta_1, \theta_2)&:=f^{p,\alpha,\beta}_{2,1}(\theta_1, \theta_2):=-(\alpha-\beta)\mfa_{p}(\theta_1)\mfa_{p}(\theta_2),\\
	f^{p,\alpha,\beta}_{2,2}(\theta_1, \theta_2)&:=\alpha\frac{\nu_1}{\nu_2}\mfs_{p}(\theta_1)\mfm_{p}(\theta_2)+\beta
	\frac{\nu_2}{\nu_1}\mfm_{p}(\theta_1)\mfs_{p}(\theta_2).
	\end{align*}
\end{theorem}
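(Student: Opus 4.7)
The plan is to apply the GLT machinery developed in Section~\ref{sub:GLT}, after observing that each of the Kronecker-product blocks appearing in $\mathcal{A}^{p,\alpha,\beta}_{\nn}$ is, up to a suitable scaling, a $2$-level Toeplitz matrix plus a zero-distributed remainder. A standard interleaving permutation will then recast $\mathcal{A}^{p,\alpha,\beta}_{\nn}$ as a $2$-level block Toeplitz sequence (modulo a low-rank correction) with explicit $2\times 2$ matrix-valued generating function, which is forced to be $f^{p,\alpha,\beta}$.

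Concretely, I would start from the factorizations produced in the proof of Theorem~\ref{thm:spline-distr}:
\begin{equation*}
nM^p_n = T_m(\mfm_p) + R^{\rm M}_m, \qquad -{\rm i}A^p_n = T_m(\mfa_p) + R^{\rm A}_m, \qquad \tfrac{1}{n}S^p_n = T_m(\mfs_p) + R^{\rm S}_m,
\end{equation*}
with remainders of rank bounded by $4p$ independently of $n$, and use the identity $(A^p_n)^T = -A^p_n$. Expanding each Kronecker product in \eqref{eq:matr_Amu} and collecting terms, for $(n_1,n_2)=(n\nu_1, n\nu_2)$ I would verify block-by-block that
\begin{align*}
\alpha\, M^p_{n_1}\otimes S^p_{n_2}+\beta\, S^p_{n_1}\otimes M^p_{n_2}
&= T_{\mm}\bigl(f^{p,\alpha,\beta}_{1,1}\bigr) + R^{(1,1)}_{\mm},\\
-\alpha\, (A^p_{n_1})^T\otimes A^p_{n_2}+\beta\, A^p_{n_1}\otimes (A^p_{n_2})^T
&= T_{\mm}\bigl(f^{p,\alpha,\beta}_{1,2}\bigr) + R^{(1,2)}_{\mm},
\end{align*}
together with the two symmetric counterparts for the $(2,1)$ and $(2,2)$ blocks. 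The cross terms $T\otimes R$, $R\otimes T$, and $R\otimes R$ collect into remainders of rank $O(n)$, which is asymptotically negligible relative to $m_1 m_2 = O(n^2)$. A permutation $\Pi$ that moves the outer $2\times 2$ block index to the innermost position then gives $\Pi^T\mathcal{A}^{p,\alpha,\beta}_{\nn}\Pi = T_{\mm}(f^{p,\alpha,\beta}) + R_{\mm}$ with $R_{\mm}$ of rank $O(n)$. By \textbf{GLT3} the Toeplitz sequence is GLT with symbol $f^{p,\alpha,\beta}$, by \textbf{GLT4} the correction $R_{\mm}$ is zero-distributed, and by \textbf{GLT2} the sum is GLT with the same symbol. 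Since $\mathcal{A}^{p,\alpha,\beta}_{\nn}$ is real symmetric and $f^{p,\alpha,\beta}$ takes Hermitian (indeed real symmetric) values, \textbf{GLT1} yields the eigenvalue distribution, and the conclusion transfers back to the unpermuted sequence by similarity.

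The main obstacle is the careful bookkeeping of complex factors, signs and scalings: the imaginary units arising from $A^p_n = {\rm i}\bigl(T_m(\mfa_p) + R^{\rm A}_m\bigr)$ together with the identity $(A^p_n)^T = -A^p_n$ must combine to produce the real off-diagonal symbol $-(\alpha-\beta)\mfa_p(\theta_1)\mfa_p(\theta_2)$, and the ratios $n_2/n_1 = \nu_2/\nu_1$ and $n_1/n_2 = \nu_1/\nu_2$ must appear in the diagonal entries of $f^{p,\alpha,\beta}$ as the natural consequence of the opposite scalings of $M^p_n$ and $S^p_n$. The fact that the remainders stay negligible in the $2$-level, $s=2$ setting is exactly where the uniform rank bound $4p$ from Theorem~\ref{thm:spline-distr} is crucial, making \textbf{GLT4} directly applicable.
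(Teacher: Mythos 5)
Your proposal is correct and follows essentially the same route as the paper: each block of $\mathcal{A}^{p,\alpha,\beta}_{\nn}$ is identified as a $2$-level Toeplitz matrix generated by $f^{p,\alpha,\beta}_{i,j}$ plus a low-rank correction (using the Toeplitz-plus-low-rank decompositions of $nM^p_n$, $-{\rm i}A^p_n$, $\tfrac1n S^p_n$), an interleaving permutation recasts the whole matrix as $T_{\mm}(f^{p,\alpha,\beta})+R_{\mm}$, and \textbf{GLT1}--\textbf{GLT4} in their block form plus similarity invariance give the eigenvalue distribution. Your sign/scaling bookkeeping (the $\nu_2/\nu_1$ factors from the opposite scalings of $M^p_n$ and $S^p_n$, and the cancellation of the imaginary units in the off-diagonal blocks via $(A^p_n)^T=-A^p_n$) is exactly what makes the generating function come out as $f^{p,\alpha,\beta}$, and your $O(n)$ versus $O(n^2)$ rank count is a slightly more explicit version of the paper's $o(2m_1m_2)$ statement.
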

\begin{proof}
	From 
	Theorem \ref{thm:spline-distr}
	we immediately deduce that the blocks of the matrices $\mathcal{A}^{{\rm curl}}_{\nn}$ and $\mathcal{A}^{{\rm div}}_{\nn}$ in \eqref{eq:matr_Acurl}--\eqref{eq:matr_Adiv} behave spectrally like
	\begin{align*}
	\left\{(\mathcal{A}^{p,{\rm curl}}_{\nn})^{(1,1)}\right\}_{\nn}
	=\left\{(\mathcal{A}^{p,{\rm div}}_{\nn})^{(2,2)}\right\}_{\nn}
	=\left\{M^{p}_{n_1}\otimes S^{p}_{n_2}\right\}_{\nn}&\sim_{\lambda}\left(\frac{\nu_2}{\nu_1}\mfm_{p}(\theta_1)\mfs_{p}(\theta_2),[-\pi,\pi]^2\right),
	\\
	\left\{(\mathcal{A}^{{p,\rm curl}}_{\nn})^{(1,2)}\right\}_{\nn}
	=\left\{-(\mathcal{A}^{p,{\rm div}}_{\nn})^{(2,1)}\right\}_{\nn}
	=\left\{-(A^{p}_{n_1})^T\otimes A^{p}_{n_2}\right\}_{\nn}&\sim_{\lambda}\left(-\mfa_{p}(\theta_1)\mfa_{p}(\theta_2),[-\pi,\pi]^2\right),
	\\
	\left\{(\mathcal{A}^{p,{\rm curl}}_{\nn})^{(2,1)}\right\}_{\nn}
	=\left\{-(\mathcal{A}^{p,{\rm div}}_{\nn})^{(1,2)}\right\}_{\nn}
	=\left\{-A^{p}_{n_1}\otimes (A^{p}_{n_2})^T\right\}_{\nn}&\sim_{\lambda}\left(-\mfa_{p}(\theta_1)\mfa_{p}(\theta_2),[-\pi,\pi]^2\right),
	\\
	\left\{(\mathcal{A}^{p,{\rm curl}}_{\nn})^{(2,2)}\right\}_{\nn}
	=\left\{(\mathcal{A}^{p,{\rm div}}_{\nn})^{(1,1)}\right\}_{\nn}
	=\left\{S^{p}_{n_1}\otimes M^{p}_{n_2}\right\}_{\nn}&\sim_{\lambda}\left(\frac{\nu_1}{\nu_2}\mfs_{p}(\theta_1)\mfm_{p}(\theta_2),[-\pi,\pi]^2\right).
	\end{align*}
	Then, recalling the discussion in Section \ref{sub:IgAmatr}, it is easy to see that each block of $\mathcal{A}^{p,\alpha,\beta}_{\nn}$ in \eqref{eq:matr_Amu} is a $2$-level Toeplitz matrix up to a low-rank correction. More precisely,
	$$
	(\mathcal{A}^{p,\alpha,\beta}_{\nn})^{(i,j)}=T_{\mm}(f^{p,\alpha,\beta}_{i,j})+R_{\mm}^{(i,j)},
	\quad i,j=1,2,
	$$
	where $T_{\mm}(f^{p,\alpha,\beta}_{ij})$ are $2$-level Toeplitz matrices generated
	by the functions $f^{p,\alpha,\beta}_{i,j}$ in \eqref{eq:f2D}.
	We are now ready to understand that the full matrix $\mathcal{A}^{p,\alpha,\beta}_{\nn}$ in \eqref{eq:matr_Amu} is a permutation of a $2$-level block Toeplitz matrix up to a low-rank correction. Indeed,
	\begin{equation*}\label{permut}
	\Pi_{\mm}\,\mathcal{A}^{p,\alpha,\beta}_{\nn}\,\Pi_{\mm}^{T}
	=\Pi_{\mm}\begin{bmatrix}
	T_{\mm}(f^{p,\alpha,\beta}_{1,1})+R_{\mm}^{(1,1)} &
	T_{\mm}(f^{p,\alpha,\beta}_{1,2})+R_{\mm}^{(1,2)}\\
	T_{\mm}(f^{p,\alpha,\beta}_{2,1})+R_{\mm}^{(2,1)} &
	T_{\mm}(f^{p,\alpha,\beta}_{22})+R_{\mm}^{(2,2)}
	\end{bmatrix}\Pi_{\mm}^{T}= T_{\mm}(f^{p,\alpha,\beta}) + R_{\mm},
	\end{equation*}
	where $\Pi_{\mm}$ is a permutation matrix of size $2m_1m_2$ and $R_{\mm}$ is a low-rank matrix whose rank is $o(2m_1m_2)$.
	Note that the first $m_1m_2$ columns of $\Pi_{\mm}$ are the odd columns of the identity matrix of size $2m_1m_2$, while the remaining ones are the even columns of the same identity matrix.
	
	Finally, we apply the GLT theory to conclude the spectral distribution in \eqref{eq:distr}. More precisely, here we use the extension of properties {\bf GLT1}--{\bf GLT4} given in Section \ref{sub:GLT} to the case where the symbol is a matrix-valued function (see \cite{GLT-bookIII} for a detailed discussion on block GLTs). For the sake of simplicity, we refer to these properties again as {\bf GLT1}--{\bf GLT4}.
	By {\bf GLT3}, $\{T_{\mm}(f^{p,\alpha,\beta})\}_{\nn}$ is a GLT sequence with symbol $f^{p,\alpha,\beta}$, while $\{R_{\mm}\}_{\nn}$ is a GLT sequence with symbol $0$, due to {\bf GLT4}.
	Then, by {\bf GLT2}, we deduce that $\{\Pi \mathcal{A}^{p,\alpha,\beta}_{\nn}\Pi^T\}_{\nn}$ is a GLT sequence with the same symbol as $\{T_{\mm}(f^{p,\alpha,\beta})\}_{\nn}$, that is $f^{p,\alpha,\beta}$.
	Since $f^{p,\alpha,\beta}$ is a Hermitian matrix-valued function, it follows from {\bf GLT1} that $\{\Pi_{\mm}\,\mathcal{A}^{p,\alpha,\beta}_{\nn}\,\Pi_{\mm}^T\}_{\nn}$ is distributed as $f^{p,\alpha,\beta}$ in the sense of the eigenvalues.
	This is also true for $\{\mathcal{A}^{p,\alpha,\beta}_{\nn}\}_{\nn}$ because the matrices $\Pi_{\mm}\, \mathcal{A}^{p,\alpha,\beta}_{\nn}\,\Pi_{\mm}^T$ and $\mathcal{A}^{p,\alpha,\beta}_{\nn}$ are similar for all $p,\alpha,\beta, \nn$.
\end{proof}

The symbol $f^{p,\alpha,\beta}$ defined in \eqref{eq:f2D} is a $2\times2$ matrix-valued function, so we have to study its two eigenvalue functions:
\begin{align*}
\lambda_{1}(f^{p,\alpha,\beta})&=\frac{1}{2}\left((f^{p,\alpha,\beta}_{1,1}+f^{p,\alpha,\beta}_{2,2})-\sqrt{(f^{p,\alpha,\beta}_{1,1}-f^{p,\alpha,\beta}_{22})^2+4f^{p,\alpha,\beta}_{1,2}f^{p,\alpha,\beta}_{2,1}}\right), \\
\lambda_{2}(f^{p,\alpha,\beta})&=\frac{1}{2}\left((f^{p,\alpha,\beta}_{1,1}+f^{p,\alpha,\beta}_{2,2})+\sqrt{(f^{p,\alpha,\beta}_{1,1}-f^{p,\alpha,\beta}_{22})^2+4f^{p,\alpha,\beta}_{1,2}f^{p,\alpha,\beta}_{2,1}}\right).
\end{align*}
It is easy to check that
\begin{align*}
f^{p,\alpha,\beta}_{1,1}(\theta_1,\theta_2)+f^{p,\alpha,\beta}_{2,2}(\theta_1,\theta_2)&=(\alpha+\beta)\laplacian_p^+(\theta_1,\theta_2),\quad \\
f^{p,\alpha,\beta}_{1,1}(\theta_1,\theta_2)-f^{p,\alpha,\beta}_{2,2}(\theta_1,\theta_2)&=(\alpha-\beta)\laplacian_p^-(\theta_1,\theta_2),\quad \\
f^{p,\alpha,\beta}_{1,2}\,(\theta_1,\theta_2)f^{p,\alpha\beta}_{2,1}(\theta_1,\theta_2)
&=(\alpha-\beta)^2\mathfrak{a}^2_p(\theta_1)\mathfrak{a}^2_p(\theta_2),
\end{align*}
with
\begin{align*}
\laplacian_p^\pm(\theta_1,\theta_2)&:=\frac{\nu_2}{\nu_1}\mfm_{p}(\theta_1)\mfs_{p}(\theta_2)\pm \frac{\nu_1}{\nu_2}\mfs_{p}(\theta_1)\mfm_{p}(\theta_2).
\end{align*}
This results in
\begin{align}
\lambda_{1}(f^{p,\alpha,\beta}(\theta_1,\theta_2))=\frac{1}{2}\left((\alpha+\beta)\laplacian_p^+(\theta_1,\theta_2)-
|\alpha-\beta|\sqrt{(\laplacian_p^-(\theta_1,\theta_2))^2+4\mfa_p^2(\theta_1)\mfa_p^2(\theta_2)}\right),
\label{eq:expres:eig:1} \\
\lambda_{2}(f^{p,\alpha,\beta}(\theta_1,\theta_2))=\frac{1}{2}\left((\alpha+\beta)\laplacian_p^+(\theta_1,\theta_2)+
|\alpha-\beta|\sqrt{(\laplacian_p^-(\theta_1,\theta_2))^2+4\mfa_p^2(\theta_1)\mfa_p^2(\theta_2)}\right).
\label{eq:expres:eig:2}
\end{align}
Note that $\laplacian_p^+(\theta_1,\theta_2)$ coincides with the symbol of the 2D Laplace operator, denoted by $\laplacian_p(\theta_1,\theta_2)$, obtained after discretization by means of tensor-product B-splines (see \cite{GaroniNM}).

When $\alpha=\beta$, the eigenvalue functions in \eqref{eq:expres:eig:1}--\eqref{eq:expres:eig:2} are a multiple of the 2D Laplacian symbol, i.e.,
\begin{equation*}
\lambda_{1}(f^{p,\alpha,\alpha}(\theta_1,\theta_2))=
\lambda_{2}(f^{p,\alpha,\alpha}(\theta_1,\theta_2))=\alpha \laplacian_p(\theta_1,\theta_2).
\end{equation*}
We also expect that if $|\alpha-\beta|\ll 1$ then both eigenvalue functions approximately behave like multiples of $\laplacian_p(\theta_1,\theta_2)$. Actually, a similar behavior is observed for general $\alpha$ and $\beta$ as stated in the following theorem.
\begin{theorem}\label{thm:bounds2D}
	We have
	\begin{equation}\label{eq:behavior_eig2D}
	0\leq \min(\alpha,\beta)\,\laplacian_p(\theta_1,\theta_2)
	\leq\lambda_{1}(f^{p,\alpha,\beta}(\theta_1,\theta_2))\leq\lambda_{2}(f^{p,\alpha,\beta}(\theta_1,\theta_2))
	\leq \max(\alpha,\beta)\,\laplacian_p(\theta_1,\theta_2).
	\end{equation}
\end{theorem}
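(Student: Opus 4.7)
The plan is to reduce the entire chain of inequalities in \eqref{eq:behavior_eig2D} to a single pointwise inequality on the symbols $\mfm_p$, $\mfs_p$, $\mfa_p$, which is then precisely the content of Lemma~\ref{lem:h-f-e}\ref{eq:ap}. The observation driving everything is that, noting $\laplacian_p = \laplacian_p^+$, the sum $\alpha+\beta$ and the absolute difference $|\alpha-\beta|$ satisfy
$\max(\alpha,\beta) = \tfrac{1}{2}(\alpha+\beta) + \tfrac{1}{2}|\alpha-\beta|$ and $\min(\alpha,\beta) = \tfrac{1}{2}(\alpha+\beta) - \tfrac{1}{2}|\alpha-\beta|$, so that both the upper bound on $\lambda_2$ and the lower bound on $\lambda_1$ take the form of comparing $\tfrac{1}{2}|\alpha-\beta|\sqrt{(\laplacian_p^-)^2 + 4\mfa_p^2(\theta_1)\mfa_p^2(\theta_2)}$ against $\tfrac{1}{2}|\alpha-\beta|\laplacian_p^+$.

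First I would dispatch the innermost inequality $\lambda_1 \le \lambda_2$, which is immediate from the explicit expressions \eqref{eq:expres:eig:1}--\eqref{eq:expres:eig:2} since the square root is non-negative. Next, I would write out the candidate upper bound $\lambda_2 \le \max(\alpha,\beta)\laplacian_p$: using the identity above for $\max(\alpha,\beta)$, this is equivalent to
\begin{equation*}
\sqrt{(\laplacian_p^-(\theta_1,\theta_2))^2 + 4\mfa_p^2(\theta_1)\mfa_p^2(\theta_2)} \le \laplacian_p^+(\theta_1,\theta_2).
\end{equation*}
Exactly the same manipulation shows that $\min(\alpha,\beta)\laplacian_p \le \lambda_1$ reduces to the very same inequality, so a single verification handles both ends of the chain.

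To verify it, I would square both sides (legitimate since $\laplacian_p^+ \ge 0$ by Lemma~\ref{lem:h-f-e}\ref{eq:mp}--\ref{eq:sp}) and use the elementary identity $(\laplacian_p^+)^2 - (\laplacian_p^-)^2 = 4\,\mfm_p(\theta_1)\mfs_p(\theta_1)\mfm_p(\theta_2)\mfs_p(\theta_2)$ that follows by expanding the squares of the definitions of $\laplacian_p^\pm$. The inequality is then equivalent to
\begin{equation*}
\mfa_p^2(\theta_1)\mfa_p^2(\theta_2) \le \bigl(\mfm_p(\theta_1)\mfs_p(\theta_1)\bigr)\bigl(\mfm_p(\theta_2)\mfs_p(\theta_2)\bigr),
\end{equation*}
which is just the product of the two instances of Lemma~\ref{lem:h-f-e}\ref{eq:ap} at $\theta_1$ and $\theta_2$. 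Finally, the leftmost bound $0 \le \min(\alpha,\beta)\laplacian_p$ follows from $\alpha,\beta > 0$ together with the non-negativity of $\mfm_p$ and $\mfs_p$ provided by parts~\ref{eq:mp}--\ref{eq:sp} of Lemma~\ref{lem:h-f-e}.

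There is no real obstacle here: the only place requiring a non-trivial spectral input is the reduction to Lemma~\ref{lem:h-f-e}\ref{eq:ap}, and once the algebraic identity for $(\laplacian_p^+)^2 - (\laplacian_p^-)^2$ is spotted, the rest is bookkeeping. The mild subtlety worth flagging is the use of $\max - \tfrac{1}{2}(\alpha+\beta) = \tfrac{1}{2}|\alpha-\beta| = \tfrac{1}{2}(\alpha+\beta) - \min$, which is what makes the upper and lower bounds collapse to a single symbol-level inequality and explains why the ratios $\nu_1/\nu_2$ drop out cleanly.
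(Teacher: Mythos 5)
Your proof is correct and follows essentially the same route as the paper's: both reduce the outer bounds to the single inequality $(\laplacian_p^-)^2+4\mfa_p^2(\theta_1)\mfa_p^2(\theta_2)\le(\laplacian_p^+)^2$, which is settled by the identity $(\laplacian_p^+)^2-(\laplacian_p^-)^2=4\mfm_p(\theta_1)\mfs_p(\theta_1)\mfm_p(\theta_2)\mfs_p(\theta_2)$ together with Lemma~\ref{lem:h-f-e}\ref{eq:ap}. The only cosmetic difference is that you handle the two cases $\alpha\ge\beta$ and $\beta\ge\alpha$ at once via $|\alpha-\beta|$, whereas the paper treats them separately.
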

\begin{proof}
	Thanks to items \ref{eq:mp} and \ref{eq:sp} of Lemma \ref{lem:h-f-e}, we obtain
	\begin{equation} \label{eq:lapl2D}
	\laplacian_p(\theta_1,\theta_2):=\frac{\nu_2}{\nu_1}\mfm_{p}(\theta_1)\mfs_{p}(\theta_2)+ \frac{\nu_1}{\nu_2}\mfs_{p}(\theta_1)\mfm_{p}(\theta_2)\geq 0.
	\end{equation}
	Moreover, by item \ref{eq:ap} of the same lemma, we have
	$\mfs_{p}(\theta)\mfm_{p}(\theta)-\mathfrak{a}^2_{p}(\theta)\geq 0.$
	Thus,
	\begin{align*}
	(\laplacian_p^-(\theta_1,\theta_2))^2 &+ 4\mfa_p^2(\theta_1)\mfa_p^2(\theta_2)
	\\
	&=\left(\frac{\nu_2}{\nu_1}\right)^2\mathfrak{m}^2_{p}(\theta_1)\mathfrak{s}^2_{p}(\theta_2)+ \left(\frac{\nu_1}{\nu_2}\right)^2\mathfrak{s}^2_{p}(\theta_1)\mathfrak{m}^2_{p}(\theta_2)-
	2\mfm_{p}(\theta_1)\mfs_{p}(\theta_1)\mfm_{p}(\theta_2)\mfs_{p}(\theta_2)+4\mfa_p^2(\theta_1)\mfa_p^2(\theta_2)
	\\
	&\leq\left(\frac{\nu_2}{\nu_1}\right)^2\mathfrak{m}^2_{p}(\theta_1)\mathfrak{s}^2_{p}(\theta_2) + \left(\frac{\nu_1}{\nu_2}\right)^2\mathfrak{s}^2_{p}(\theta_1)\mathfrak{m}^2_{p}(\theta_2)+
	2\mfm_{p}(\theta_1)\mfs_{p}(\theta_1)\mfm_{p}(\theta_2)\mfs_{p}(\theta_2)= \laplacian_p(\theta_1,\theta_2)^2.
	\end{align*}
	Suppose now $\alpha\geq \beta$. We deduce from the previous inequality in combination with \eqref{eq:expres:eig:1} that
	\begin{align*}
	\lambda_{1}(f^{p,\alpha,\beta}(\theta_1,\theta_2))
	&\geq\frac{1}{2}\left((\alpha+\beta)\laplacian_p(\theta_1,\theta_2)-
	(\alpha-\beta)\laplacian_p(\theta_1,\theta_2)\right)=\beta \laplacian_p(\theta_1,\theta_2),
	\end{align*}
	and with \eqref{eq:expres:eig:2} that
	\begin{align*}
	\lambda_{2}(f^{p,\alpha,\beta}(\theta_1,\theta_2))
	&\leq\frac{1}{2}\left((\alpha+\beta)\laplacian_p(\theta_1,\theta_2)+
	(\alpha-\beta)\laplacian_p(\theta_1,\theta_2)\right)=\alpha \laplacian_p(\theta_1,\theta_2).
	\end{align*}
	Similar bounds hold in case $\beta\geq\alpha$.
	Finally, from \eqref{eq:expres:eig:1}--\eqref{eq:expres:eig:2} it follows $\lambda_{1}(f^{p,\alpha,\beta})\leq\lambda_{2}(f^{p,\alpha,\beta})$, and consequently we arrive at \eqref{eq:behavior_eig2D}.
\end{proof}

\begin{remark} \label{rmk:sharp2D}
	The bounds in \eqref{eq:behavior_eig2D} are not just bounds, but provide a quite precise description of the two eigenvalue functions. Indeed, the errors
	\begin{equation*}
	| \lambda_{1}(f^{p,\alpha,\beta}(\theta_1,\theta_2))-\min(\alpha,\beta)\,\laplacian_p(\theta_1,\theta_2)|,
	\quad | \lambda_{2}(f^{p,\alpha,\beta}(\theta_1,\theta_2))-\max(\alpha,\beta)\,\laplacian_p(\theta_1,\theta_2)|
	\end{equation*}
	are small as can be observed in our numerical experiments (see Section~\ref{sub:numer_symbol}).
	They could be estimated by using the bounds for $\mfm_p$, $\mfa_p$, and $\mfs_p$ in \cite[Lemmas 3.4--3.6]{MC-collocation}, keeping in mind the relation between the Galerkin and collocation symbols \cite[Footnote 2]{lusin}.
\end{remark}

Thanks to Theorem \ref{thm:bounds2D} and the properties of the 2D Laplacian symbol (see, e.g., \cite{SINUM-IgA,serra2} and also Lemma \ref{lem:h-f-e}), we immediately deduce that the eigenvalue functions $\lambda_i(f^{p,\alpha,\beta})$, $i=1,2$ have the following vanishing behavior.
\begin{corollary} \label{cor:zeros2D}
	When $\alpha,\beta>0$, it holds that for $i=1,2$,
	\begin{enumerate}
		\item[\mylabel{zero-an2D}{(a)}] $\lambda_i(f^{p,\alpha,\beta}(\theta_1,\theta_2))$ has a unique zero of order $2$ at $(\theta_1,\theta_2)=(0,0)$;
		\item[\mylabel{zero-nm2D}{(b)}] $\lambda_i(f^{p,\alpha,\beta}(\theta_1,\theta_2))$ presents an exponential decay in $p$ at all points $(\pm\pi,\theta_2)$ and $(\theta_1,\pm\pi)$, implying ``numerical zeros'' for large $p$.
	\end{enumerate}
\end{corollary}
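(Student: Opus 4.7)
The plan is to push everything through the sandwich inequality provided by Theorem \ref{thm:bounds2D}. Since $\alpha,\beta>0$, both $\min(\alpha,\beta)$ and $\max(\alpha,\beta)$ are strictly positive constants, so the inequality
\[
\min(\alpha,\beta)\,\laplacian_p(\theta_1,\theta_2)\leq\lambda_i(f^{p,\alpha,\beta}(\theta_1,\theta_2))\leq \max(\alpha,\beta)\,\laplacian_p(\theta_1,\theta_2),\quad i=1,2,
\]
reduces the analysis of zeros and of exponentially small values of $\lambda_1,\lambda_2$ to the analogous analysis for the 2D Laplacian symbol $\laplacian_p$ defined in \eqref{eq:lapl2D}. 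In particular, $\lambda_i$ vanishes exactly where $\laplacian_p$ vanishes, and $\lambda_i$ is exponentially small in $p$ wherever $\laplacian_p$ is.

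For part \ref{zero-an2D}, I would use Lemma \ref{lem:h-f-e}\ref{eq:mp} to note that $\mfm_p(\theta)>0$ for every $\theta\in[-\pi,\pi]$, so each of the two nonnegative summands composing $\laplacian_p(\theta_1,\theta_2)$ vanishes iff the corresponding factor $\mfs_p$ vanishes. By Lemma \ref{lem:h-f-e}\ref{eq:sp}, $\mfs_p(\theta)=\mfm_{p-1}(\theta)(2-2\cos\theta)$ has its unique zero on $[-\pi,\pi]$ at $\theta=0$, and this zero is of order $2$ since $\mfm_{p-1}(0)>0$. Hence $\laplacian_p(\theta_1,\theta_2)=0$ forces both $\mfs_p(\theta_1)=0$ and $\mfs_p(\theta_2)=0$, i.e.\ $(\theta_1,\theta_2)=(0,0)$. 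The order of this zero follows from a Taylor expansion near the origin: $\mfs_p(\theta)\sim c\,\theta^2$ and $\mfm_p(\theta)\sim 1$, so $\laplacian_p(\theta_1,\theta_2)\sim c\bigl(\tfrac{\nu_2}{\nu_1}\theta_2^2+\tfrac{\nu_1}{\nu_2}\theta_1^2\bigr)$, a zero of order exactly $2$. The sandwich then forces $\lambda_i$ to have the same unique order-$2$ zero at $(0,0)$.

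For part \ref{zero-nm2D}, I would evaluate $\laplacian_p$ at a point of the form $(\pm\pi,\theta_2)$ and show that both its summands are exponentially small in $p$. The second summand, $\frac{\nu_1}{\nu_2}\mfs_p(\pm\pi)\mfm_p(\theta_2)$, is controlled directly by Lemma \ref{lem:h-f-e}\ref{eq:sp}, which gives $\mfs_p(\pm\pi)\le 2^{2-p}\max_{\theta}\mfs_p(\theta)$, while $\mfm_p(\theta_2)\le 1$ by Lemma \ref{lem:h-f-e}\ref{eq:mp}. For the first summand, $\frac{\nu_2}{\nu_1}\mfm_p(\pm\pi)\mfs_p(\theta_2)$, I would exploit the identity $\mfs_p(\theta)=\mfm_{p-1}(\theta)(2-2\cos\theta)$ applied at $\theta=\pm\pi$ to obtain $\mfm_{p-1}(\pm\pi)=\mfs_p(\pm\pi)/4$, which is again exponentially small. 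So $\mfm_p(\pm\pi)$ decays exponentially (shifting the index $p\to p+1$), and since $\mfs_p(\theta_2)$ is uniformly bounded in $\theta_2$, the whole summand is exponentially small. The same argument applied with roles swapped covers $(\theta_1,\pm\pi)$. Applying the upper bound of \eqref{eq:behavior_eig2D} transfers the exponential decay to both $\lambda_1$ and $\lambda_2$.

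The argument is essentially bookkeeping on top of Theorem \ref{thm:bounds2D} and Lemma \ref{lem:h-f-e}; the only subtlety worth flagging is the exponential decay of $\mfm_p(\pm\pi)$, which is not stated explicitly in the lemma but is immediate from the relation $\mfs_p=\mfm_{p-1}(2-2\cos\theta)$ combined with the stated bound on $\mfs_p(\pi)$. With that observation in hand, both parts of the corollary follow directly from the sandwich, and no extra spectral computation on $f^{p,\alpha,\beta}$ itself is required.
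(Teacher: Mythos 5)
Your proof is correct and follows exactly the route the paper intends: the sandwich inequality of Theorem \ref{thm:bounds2D} transfers the vanishing behavior of $\laplacian_p$ to both eigenvalue functions, and the needed properties of $\laplacian_p$ (unique order-$2$ zero at the origin, exponential decay in $p$ at the boundary points) are exactly the "properties of the 2D Laplacian symbol" the paper invokes, which you correctly derive from Lemma \ref{lem:h-f-e}, including the slightly hidden exponential decay of $\mfm_p(\pm\pi)$ via the identity $\mfs_p=\mfm_{p-1}(2-2\cos\theta)$.
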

The knowledge of the vanishing behavior of the eigenvalue functions in Corollary \ref{cor:zeros2D} is important in the design of fast iterative solvers for linear systems involving $\mathcal{A}^{p,\alpha,\beta}_{\nn}$ as coefficient matrix. This will be illustrated in Section~\ref{sub:multi}.

\subsection{3D case}\label{sub:distr3D}
We now address the spectral symbol in \eqref{eq:distr} for $d=3$ considering the matrices $\mathcal{A}^{p,\alpha,\beta}_{\nn}$ in \eqref{eq:matr_Amu3D}.

\begin{theorem}\label{thm:distr3D}
	For $d=3$ and $\nn:=(n_1,n_2,n_3)=(n\nu_1,n\nu_2,n\nu_3)$, the symbol in \eqref{eq:distr} is given by
	\begin{equation}\label{eq:f3D}
	f^{p,\alpha,\beta}:[-\pi,\pi]^3\rightarrow \mathbb{C}^{3\times3}, \quad
	f^{p,\alpha,\beta}(\theta_1, \theta_2, \theta_3):=\begin{bmatrix}
	f^{p,\alpha,\beta}_{1,1}(\theta_1, \theta_2, \theta_3) & f^{p,\alpha,\beta}_{1,2}(\theta_1, \theta_2, \theta_3) & f^{p,\alpha,\beta}_{1,3}(\theta_1, \theta_2, \theta_3)\\
	f^{p,\alpha,\beta}_{2,1}(\theta_1, \theta_2, \theta_3) & f^{p,\alpha,\beta}_{2,2}(\theta_1, \theta_2, \theta_3) & f^{p,\alpha,\beta}_{2,3}(\theta_1, \theta_2, \theta_3)\\
	f^{p,\alpha,\beta}_{3,1}(\theta_1, \theta_2, \theta_3) & f^{p,\alpha,\beta}_{3,2}(\theta_1, \theta_2, \theta_3) & f^{p,\alpha,\beta}_{3,3}(\theta_1, \theta_2, \theta_3)
	\end{bmatrix},
	\end{equation}
	where
	\begin{align*}
	f^{p,\alpha,\beta}_{1,1}(\theta_1, \theta_2, \theta_3) &:= \beta \gamma_1\mfs_{p}(\theta_1)\mfm_{p}(\theta_2)\mfm_{p}(\theta_3)
	+ \alpha \gamma_2 \mfm_{p}(\theta_1)\mfs_{p}(\theta_2)\mfm_{p}(\theta_3)
	+ \alpha \gamma_3 \mfm_{p}(\theta_1)\mfm_{p}(\theta_2)\mfs_{p}(\theta_3),\\
	f^{p,\alpha,\beta}_{1,2}(\theta_1, \theta_2, \theta_3) &:= f^{p,\alpha,\beta}_{2,1}(\theta_1, \theta_2, \theta_3) := -(\alpha-\beta)\delta_3\mfa_{p}(\theta_1)\mfa_{p}(\theta_2)\mfm_{p}(\theta_3),\\
	f^{p,\alpha,\beta}_{1,3}(\theta_1, \theta_2, \theta_3) &:= f^{p,\alpha,\beta}_{3,1}(\theta_1, \theta_2, \theta_3) := -(\alpha-\beta)\delta_2\mfa_{p}(\theta_1)\mfm_{p}(\theta_2)\mfa_{p}(\theta_3),\\
	f^{p,\alpha,\beta}_{2,2}(\theta_1, \theta_2, \theta_3) &:= \alpha \gamma_1\mfs_{p}(\theta_1)\mfm_{p}(\theta_2)\mfm_{p}(\theta_3)
	+ \beta \gamma_2\mfm_{p}(\theta_1)\mfs_{p}(\theta_2)\mfm_{p}(\theta_3)
	+ \alpha \gamma_3\mfm_{p}(\theta_1)\mfm_{p}(\theta_2)\mfs_{p}(\theta_3),\\
	f^{p,\alpha,\beta}_{2,3}(\theta_1, \theta_2, \theta_3) &:= f^{p,\alpha,\beta}_{3,2}(\theta_1, \theta_2, \theta_3) := -(\alpha-\beta)\delta_1\mfm_{p}(\theta_1)\mfa_{p}(\theta_2)\mfa_{p}(\theta_3),\\
	f^{p,\alpha,\beta}_{3,3}(\theta_1, \theta_2, \theta_3) &:= \alpha \gamma_1\mfs_{p}(\theta_1)\mfm_{p}(\theta_2)\mfm_{p}(\theta_3)
	+ \alpha \gamma_2\mfm_{p}(\theta_1)\mfs_{p}(\theta_2)\mfm_{p}(\theta_3)
	+ \beta \gamma_3\mfm_{p}(\theta_1)\mfm_{p}(\theta_2)\mfs_{p}(\theta_3),
	\end{align*}
	with
	\begin{equation*}
	\gamma_i:=\frac{\nu_i^2}{\nu_1\nu_2\nu_3}, \quad \delta_i:=\frac{1}{\nu_i}, \quad i=1,2,3.
	\end{equation*}
\end{theorem}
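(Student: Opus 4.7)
The proof mirrors the 2D argument of Theorem \ref{thm:distr2D}, but with a $3\times 3$ block structure and an extra factor $n = n^{d-2}$ absorbed into the normalisation. As a first step, I would combine Theorem \ref{thm:spline-distr} ($\{nM^p_n\}_n \sim_\lambda \mfm_p$, $\{-{\rm i} A^p_n\}_n \sim_\lambda \mfa_p$, $\{S^p_n/n\}_n \sim_\lambda \mfs_p$) with the tensor-product rules of the multilevel GLT algebra to identify the GLT symbol of each Kronecker triple appearing in $n\mathcal{A}^{p,\alpha,\beta}_{\nn}$. Writing $M^p_{n_i} = (1/n_i)(n_i M^p_{n_i})$ and $S^p_{n_i} = n_i(S^p_{n_i}/n_i)$ and substituting $n_i = n\nu_i$, one obtains for instance
\begin{equation*}
n\,M^p_{n_1}\otimes M^p_{n_2}\otimes S^p_{n_3} = \frac{n\,n_3}{n_1 n_2}\,(n_1 M^p_{n_1})\otimes(n_2 M^p_{n_2})\otimes(S^p_{n_3}/n_3),
\end{equation*}
whose GLT symbol is $\gamma_3\,\mfm_p(\theta_1)\mfm_p(\theta_2)\mfs_p(\theta_3)$ with $\gamma_3 = \nu_3/(\nu_1\nu_2) = \nu_3^2/(\nu_1\nu_2\nu_3)$. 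Summing the three analogous contributions then reproduces the diagonal entry $f^{p,\alpha,\beta}_{3,3}$, and likewise for the other two diagonal blocks. For the off-diagonal blocks I would use the fact that $A^p_{n_i}$ has GLT symbol ${\rm i}\mfa_p$ together with $(A^p_{n_j})^T = -A^p_{n_j}$: for example,
\begin{equation*}
-n\,(A^p_{n_1})^T\otimes A^p_{n_2}\otimes M^p_{n_3} = n\,A^p_{n_1}\otimes A^p_{n_2}\otimes M^p_{n_3}
\end{equation*}
has GLT symbol $\delta_3\,({\rm i}\mfa_p(\theta_1))({\rm i}\mfa_p(\theta_2))\mfm_p(\theta_3) = -\delta_3\,\mfa_p(\theta_1)\mfa_p(\theta_2)\mfm_p(\theta_3)$, matching the $-(\alpha-\beta)\delta_3$ coefficient in $f^{p,\alpha,\beta}_{1,2}$.

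As a second step, exactly as in the 2D proof, I would recall from Section \ref{sub:IgAmatr} that each of $M^p_{n_i}$, $A^p_{n_i}$, $S^p_{n_i}$ agrees with a genuine $1$-level Toeplitz matrix except in a narrow boundary band. Consequently, every Kronecker triple appearing in \eqref{eq:matr_Amu3D} is a $3$-level Toeplitz matrix plus a correction of rank $O(n_1 n_2 + n_1 n_3 + n_2 n_3) = o(n_1 n_2 n_3)$, and in particular
\begin{equation*}
(n\mathcal{A}^{p,\alpha,\beta}_{\nn})^{(i,j)} = T_{\mm}(f^{p,\alpha,\beta}_{i,j}) + R^{(i,j)}_{\mm}, \quad i,j = 1,2,3.
\end{equation*}
Interleaving the three vector components through a suitable permutation matrix $\Pi_{\mm}$ converts this $3\times 3$ block arrangement into a $3$-level block Toeplitz layout, yielding
\begin{equation*}
\Pi_{\mm}\,n\mathcal{A}^{p,\alpha,\beta}_{\nn}\,\Pi_{\mm}^T = T_{\mm}(f^{p,\alpha,\beta}) + R_{\mm},
\end{equation*}
where $T_{\mm}(f^{p,\alpha,\beta})$ is generated by the $3\times 3$ matrix-valued symbol \eqref{eq:f3D} and $\rank(R_{\mm}) = o(3n_1 n_2 n_3)$.

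Finally I would close the argument by invoking the block version of \textbf{GLT1}--\textbf{GLT4}: \textbf{GLT3} gives that $\{T_{\mm}(f^{p,\alpha,\beta})\}_{\nn}$ is a block GLT sequence with symbol $f^{p,\alpha,\beta}$; \textbf{GLT4} gives that $\{R_{\mm}\}_{\nn}$ is block GLT with symbol $0$; \textbf{GLT2} shows that their sum is again a block GLT sequence with symbol $f^{p,\alpha,\beta}$; and since $f^{p,\alpha,\beta}$ is Hermitian, \textbf{GLT1} upgrades this to an eigenvalue distribution on $[-\pi,\pi]^3$. Similarity of $\Pi_{\mm}\,n\mathcal{A}^{p,\alpha,\beta}_{\nn}\,\Pi_{\mm}^T$ and $n\mathcal{A}^{p,\alpha,\beta}_{\nn}$ then transfers the distribution to the latter, proving \eqref{eq:distr} for $d = 3$.

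The hard part is not conceptual but bookkeeping: one has to keep track of the six distinct scaling constants $\gamma_1,\gamma_2,\gamma_3,\delta_1,\delta_2,\delta_3$, the asymmetric placement of $\mfs_p$ versus $\mfm_p$ and of $\mfa_p$ versus $\mfm_p$ across the nine block entries, and the minus signs arising from $({\rm i}\mfa_p)^2 = -\mfa_p^2$ together with $(A^p_{n_j})^T = -A^p_{n_j}$, in order to reproduce \emph{exactly} the expressions stated in \eqref{eq:f3D}.
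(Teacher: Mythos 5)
Your proposal is correct and follows exactly the route the paper takes (which simply defers to the 2D argument of Theorem \ref{thm:distr2D} and omits the details): identify each Kronecker triple as a $3$-level Toeplitz matrix plus a low-rank correction, track the scalings $\gamma_i$, $\delta_i$ coming from $n_i = n\nu_i$ and the normalisation $n^{d-2}=n$, permute to a $3\times 3$ block Toeplitz structure, and conclude via the block versions of \textbf{GLT1}--\textbf{GLT4}. Your sign and scaling bookkeeping (in particular $(A^p_{n_j})^T=-A^p_{n_j}$ and $({\rm i}\mfa_p)^2=-\mfa_p^2$, and the rank bound $O(n_1n_2+n_1n_3+n_2n_3)=o(n_1n_2n_3)$ for the corrections) checks out against \eqref{eq:matr_Amu3D} and \eqref{eq:f3D}.
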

\begin{proof}
	The proof follows the same line of arguments as the one of Theorem \ref{thm:distr2D}, so we can omit the details. First, it can be shown that $n\mathcal{A}^{p,\alpha,\beta}_{\nn}$ is a permutation of a $3$-level block Toeplitz matrix up to a low-rank correction, and that the generating function of the Toeplitz part is the Hermitian matrix-valued function \eqref{eq:f3D}.
	Then, using {\bf GLT1}--{\bf GLT4} in their matrix-valued form exactly as done in the 2D setting, we can conclude \eqref{eq:distr} with symbol \eqref{eq:f3D}.
\end{proof}

Similar to the 2D case (Theorem \ref{thm:bounds2D}), we can formulate bounds for the eigenvalue functions of the $3\times3$ matrix-valued symbol $f^{p,\alpha,\beta}$ defined in \eqref{eq:f3D} in terms of the 3D Laplacian symbol given by (see \cite{serra2})
\begin{equation}\label{eq:lapl3D}
\laplacian_p(\theta_1,\theta_2,\theta_3):=\gamma_1\mfs_{p}(\theta_1)\mfm_{p}(\theta_2)\mfm_{p}(\theta_3)
+ \gamma_2\mfm_{p}(\theta_1)\mfs_{p}(\theta_2)\mfm_{p}(\theta_3)
+ \gamma_3\mfm_{p}(\theta_1)\mfm_{p}(\theta_2)\mfs_{p}(\theta_3).
\end{equation}
\begin{theorem}\label{thm:bounds3D}
	We have
	\begin{equation}\label{eq:behavior_eig3D}
	0 \leq \min(\alpha,\beta)\,\laplacian_p(\theta_1,\theta_2,\theta_3)
	\leq\lambda_i(f^{p,\alpha,\beta}(\theta_1,\theta_2,\theta_3))
	\leq \max(\alpha,\beta)\,\laplacian_p(\theta_1,\theta_2,\theta_3), \quad i=1,2,3.
	\end{equation}
\end{theorem}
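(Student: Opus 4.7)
The plan is to avoid writing down the three eigenvalues of the $3\times 3$ Hermitian matrix $f^{p,\alpha,\beta}$ explicitly (unlike the 2D case of Theorem \ref{thm:bounds2D}, the characteristic polynomial is cubic), and instead exploit the linearity of $f^{p,\alpha,\beta}$ in $(\alpha,\beta)$ together with a rank-1 plus positive-diagonal decomposition. By direct inspection of Theorem \ref{thm:distr3D}, one has $f^{p,\alpha,\beta} = \alpha\, f^{p,1,0} + \beta\, f^{p,0,1}$; moreover, an entry-wise check yields the crucial identity
\begin{equation*}
f^{p,1,0}(\theta_1,\theta_2,\theta_3) + f^{p,0,1}(\theta_1,\theta_2,\theta_3) = \laplacian_p(\theta_1,\theta_2,\theta_3)\, I_3,
\end{equation*}
because the three diagonal $\gamma_i$-contributions combine into \eqref{eq:lapl3D}, while the off-diagonal $\mfa$-terms cancel (they carry opposite signs). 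In particular, $\mathrm{tr}(f^{p,0,1}) = \laplacian_p$.

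The main step is to show that $f^{p,0,1}(\theta_1,\theta_2,\theta_3)$ is pointwise positive semidefinite. Abbreviating $m_l := \mfm_p(\theta_l)$, $a_l := \mfa_p(\theta_l)$, $s_l := \mfs_p(\theta_l)$ and using the easily verified identity $\sqrt{\gamma_i\gamma_j} = \delta_k$ for $\{i,j,k\}=\{1,2,3\}$, I define the real vector $u\in\mathbb{R}^3$ by
\begin{equation*}
u_i := \sqrt{\gamma_i\, m_j m_k / m_i}\, a_i, \qquad \{i,j,k\}=\{1,2,3\},
\end{equation*}
which is well defined because $m_l>0$ by Lemma \ref{lem:h-f-e}\ref{eq:mp}. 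A short computation shows $u_i u_j = f^{p,0,1}_{i,j}$ whenever $i\ne j$, whereas on the diagonal
\begin{equation*}
f^{p,0,1}_{i,i} - u_i^2 = \gamma_i\, m_j m_k\, \frac{m_i s_i - a_i^2}{m_i} \geq 0
\end{equation*}
by Lemma \ref{lem:h-f-e}\ref{eq:ap}. Therefore $f^{p,0,1} = u u^T + E$, where $E$ is a diagonal matrix with nonnegative entries; both summands are PSD, hence so is $f^{p,0,1}$. The algebraic ``miracle'' that makes this decomposition go through is precisely the Cauchy--Schwarz-type inequality $\mfm_p \mfs_p \geq \mfa_p^2$ of Lemma \ref{lem:h-f-e}\ref{eq:ap}, already used in the 2D proof, and this is the main obstacle of the argument.

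To conclude, since $f^{p,0,1}\geq 0$ has trace $\laplacian_p$, all three of its eigenvalues lie in $[0,\laplacian_p]$; consequently $f^{p,1,0} = \laplacian_p I_3 - f^{p,0,1}$ is PSD as well. Assuming without loss of generality $\alpha\geq\beta$ (so $\min(\alpha,\beta)=\beta$ and $\max(\alpha,\beta)=\alpha$), linearity and the identity above give
\begin{equation*}
f^{p,\alpha,\beta} - \beta\, \laplacian_p I_3 = (\alpha-\beta)\, f^{p,1,0} \geq 0, \qquad \alpha\, \laplacian_p I_3 - f^{p,\alpha,\beta} = (\alpha-\beta)\, f^{p,0,1} \geq 0,
\end{equation*}
which yields the two-sided eigenvalue bound \eqref{eq:behavior_eig3D}; the leftmost inequality $\laplacian_p\geq 0$ follows from Lemma \ref{lem:h-f-e}\ref{eq:mp},\ref{eq:sp}.
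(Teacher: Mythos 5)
Your proof is correct, and it takes a genuinely different route from the paper. The paper proves \eqref{eq:behavior_eig3D} for $\alpha>\beta$ by showing that $\det(f^{p,\alpha,\beta}-\lambda I_3)>0$ for $\lambda<\beta\laplacian_p$ (and symmetrically from above): it introduces $B:=\frac{1}{\alpha-\beta}(f^{p,\alpha,\beta}-\beta\laplacian_p I_3)$ --- which is exactly your $f^{p,1,0}$ --- expands $\det(B+\epsilon I_3)$ in principal minors, and verifies by direct computation (using items \ref{eq:mp}--\ref{eq:ap} of Lemma~\ref{lem:h-f-e}) that $\det(B)$, the three $2\times2$ principal minors, and the diagonal entries are all nonnegative; in effect it certifies $f^{p,1,0}\succeq 0$ via the principal-minor criterion. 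You instead certify positive semidefiniteness of the complementary piece $f^{p,0,1}$ by exhibiting it as $uu^T+E$ with $E$ diagonal and nonnegative, which reduces the whole matrix inequality to the single scalar inequality $\mfm_p\mfs_p\ge\mfa_p^2$ of Lemma~\ref{lem:h-f-e}\ref{eq:ap} applied coordinatewise, and then you transfer positivity to $f^{p,1,0}$ via the clean structural identity $f^{p,1,0}+f^{p,0,1}=\laplacian_p I_3$ together with the trace bound. (I checked the key identities: $\sqrt{\gamma_i\gamma_j}=\delta_k$ indeed holds since $\gamma_i\gamma_j=\nu_i^2\nu_j^2/(\nu_1\nu_2\nu_3)^2=1/\nu_k^2$, the off-diagonal products $u_iu_j$ match $f^{p,0,1}_{i,j}$, and the diagonal defects are $\gamma_i\mfm_{[j]}\mfm_{[k]}(\mfm_{[i]}\mfs_{[i]}-\mfa_{[i]}^2)/\mfm_{[i]}\ge0$.) Your argument buys a computation-free treatment of the $3\times3$ determinant and, unlike the paper's proof, needs no case split on $\alpha=\beta$ versus $\alpha\neq\beta$; it also makes transparent the structural reason behind the bounds, namely that the divergence part of the symbol is a rank-one perturbation of a PSD diagonal matrix, mirroring the fact that $\Grad\Div$ acts along a single direction in frequency space. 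The paper's determinant approach, on the other hand, stays entirely within elementary minor computations and requires no insight into the rank structure. Both proofs rest on the same key lemma, Lemma~\ref{lem:h-f-e}\ref{eq:ap}.
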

\begin{proof}
	The bounds in \eqref{eq:behavior_eig3D} obviously hold for $\alpha=\beta$, because in this case the symbol takes the form of a diagonal matrix, i.e.,
	\begin{equation*}
	f^{p,\alpha,\alpha}(\theta_1,\theta_2,\theta_3)=\begin{bmatrix}
	\alpha \laplacian_p(\theta_1,\theta_2,\theta_3) &  &  \\
	& \alpha \laplacian_p(\theta_1,\theta_2,\theta_3) &  \\
	&  &\alpha \laplacian_p(\theta_1,\theta_2,\theta_3)
	\end{bmatrix}.
	\end{equation*}
	Note that, here and in the following, we might ignore the visualization of zero entries in a matrix when there is no confusion.
	Therefore, it remains to prove the bounds for $\alpha\neq\beta$.
	We first address the case $\alpha>\beta$.
	It suffices to show that for any $(\theta_1, \theta_2, \theta_3)\in [-\pi, \pi]^3$ we have
	\begin{equation}
	\label{eq:det_lower}
	\det(f^{p,\alpha,\beta}(\theta_1, \theta_2, \theta_3)-\lambda\, I_3)> 0, \quad \forall \lambda<\beta \laplacian_p(\theta_1,\theta_2,\theta_3),
	\end{equation}
	and
	\begin{equation}
	\label{eq:det_upper}
	\det(\lambda\, I_3 - f^{p,\alpha,\beta}(\theta_1, \theta_2, \theta_3))> 0, \quad \forall \lambda>\alpha \laplacian_p(\theta_1,\theta_2,\theta_3),
	\end{equation}
	where $I_3$ is the $3\times3$ identity matrix.
	To simplify the notation, along this proof we use
	$$
	\mfmi{i}:=\mfm_p(\theta_i), \quad \mfai{i}:=\mfa_p(\theta_i), \quad \mfsi{i}:=\mfs_p(\theta_i), \quad i=1,2,3.
	$$
	We start by showing \eqref{eq:det_lower}.
	Let
	\begin{equation}
	\label{eq:epsilon}
	\epsilon:=\frac{1}{\alpha-\beta}\bigl(\beta \laplacian_p(\theta_1,\theta_2,\theta_3)-\lambda\bigr),
	\end{equation}
	and
	\begin{align*}
	B&:=\frac{1}{\alpha-\beta}\Bigl(f^{p,\alpha,\beta}(\theta_1,\theta_2,\theta_3)-\beta \laplacian_p(\theta_1,\theta_2,\theta_3)\,I_3\Bigr)
	\\
	&=\begin{bmatrix}
	\gamma_2\mfmi{1}\mfsi{2}\mfmi{3}+\gamma_3\mfmi{1}\mfmi{2}\mfsi{3}& -\delta_3\mfai{1}\mfai{2}\mfmi{3} & -\delta_2\mfai{1}\mfmi{2}\mfai{3}
	\\
	-\delta_3\mfai{1}\mfai{2}\mfmi{3}& \gamma_1\mfsi{1}\mfmi{2}\mfmi{3}+\gamma_3\mfmi{1}\mfmi{2}\mfsi{3}  & -\delta_1\mfmi{1}\mfai{2}\mfai{3}
	\\
	-\delta_2\mfai{1}\mfmi{2}\mfai{3}& -\delta_1\mfmi{1}\mfai{2}\mfai{3}& \gamma_1\mfsi{1}\mfmi{2}\mfmi{3}+\gamma_2\mfmi{1}\mfsi{2}\mfmi{3}
	\end{bmatrix}.
	\end{align*}
	Then,
	$$
	\det(f^{p,\alpha,\beta}(\theta_1,\theta_2,\theta_3)-\lambda\, I_3)=(\alpha-\beta)^3\det(B+\epsilon\, I_3).
	$$
	We observe that for any $3\times 3$ matrix $B=[b_{i,j}]_{i,j=1}^3$ we have
	\begin{equation}
	\label{eq:det-perturbation}
	\det (B+\epsilon\, I_3)=\det (B)+\epsilon(\det(B_{1,1})+\det(B_{2,2})+\det(B_{3,3}))+\epsilon^2 (b_{1,1}+b_{2,2}+b_{3,3})+\epsilon^3,
	\end{equation}
	where $B_{i,j}$ denotes the submatrix obtained from $ B$ by removing the $i$-th row and the $j$-th column.
	From \eqref{eq:epsilon} we see that $\epsilon>0$ for all $\lambda<\beta \laplacian_p(\theta_1,\theta_2,\theta_3)$.
	In view of \eqref{eq:det-perturbation}, we are interested in proving that all the principal submatrices of $B$  have  nonnegative  determinant.
	Taking into account the inequalities for $\mfm_p$ and $\mfs_p$ in items \ref{eq:mp} and \ref{eq:sp} of Lemma~\ref{lem:h-f-e}, it is clear that the diagonal elements of $B$ are nonnegative. Moreover, a direct computation and a combined use of the inequalities for $\mfm_p$, $\mfa_p$, $\mfs_p$ in items \ref{eq:mp}--\ref{eq:ap} of Lemma~\ref{lem:h-f-e} show
	$$
	\det(B)\geq
	2\delta_1\delta_2\delta_3\mfmi{1}\mfmi{2}\mfmi{3}\bigl(\mfmi{1}\mfsi{1}\mfmi{2}\mfsi{2}\mfmi{3}\mfsi{3}-(\mfai{1}\mfai{2}\mfai{3})^2\bigr)\geq 0,
	$$
	and
	$$
	\det(B_{3,3})\geq \gamma_3\mfmi{1}\mfmi{2}\mfsi{3}
	\bigl(\gamma_1\mfsi{1}\mfmi{2}\mfmi{3}+ \gamma_2\mfmi{1}\mfsi{2}\mfmi{3}+\gamma_3\mfmi{1}\mfmi{2}\mfsi{3}\bigr)=\gamma_3\mfmi{1}\mfmi{2}\mfsi{3}\laplacian_p(\theta_1,\theta_2,\theta_3)\geq 0.
	$$
	We find analogous expressions for $\det(B_{2,2})$ and $\det(B_{1,1})$. This proves \eqref{eq:det_lower}.
	By using similar arguments, we also deduce \eqref{eq:det_upper}.
	The case $\alpha< \beta$ can be addressed in a completely symmetric way by swapping the lower and upper bounds.
\end{proof}

\begin{remark} \label{rmk:sharp3D}
	The bounds in \eqref{eq:behavior_eig3D} provide a quite good description of the three eigenvalue functions. More precisely, two eigenvalue functions are well approximated by $\alpha\laplacian_p$ and the third one by $\beta\laplacian_p$. A numerical confirmation is given in Section \ref{sub:numer_symbol}. This behavior is completely similar to the 2D case; see Remark~\ref{rmk:sharp2D}.
\end{remark}

Again, similar to the 2D case (Corollary \ref{cor:zeros2D}), we can deduce the following vanishing behavior of the eigenvalue functions $\lambda_i(f^{p,\alpha,\beta})$, $i=1,2,3$.
\begin{corollary} \label{cor:zeros3D}
	When $\alpha,\beta>0$, it holds that for $i=1,2,3$,
	\begin{enumerate}
		\item[\mylabel{zero-an3D}{(a)}] $\lambda_i(f^{p,\alpha,\beta}(\theta_1,\theta_2,\theta_3))$ has a unique zero of order $2$ at $(\theta_1,\theta_2,\theta_3)=(0,0,0)$;
		\item[\mylabel{zero-nm3D}{(b)}] $\lambda_i(f^{p,\alpha,\beta}(\theta_1,\theta_2,\theta_3))$ presents an exponential decay in $p$ at all points $(\pm\pi,\theta_2,\theta_3)$, $(\theta_1,\pm\pi,\theta_3)$ and $(\theta_1,\theta_2,\pm\pi)$, implying ``numerical zeros'' for large $p$.
	\end{enumerate}
\end{corollary}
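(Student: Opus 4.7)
The plan is to derive both statements as direct consequences of the squeeze provided by Theorem~\ref{thm:bounds3D}. Since $\alpha,\beta>0$, the inequalities
$\min(\alpha,\beta)\laplacian_p\leq\lambda_i(f^{p,\alpha,\beta})\leq\max(\alpha,\beta)\laplacian_p$
bound every eigenvalue function between two positive constant multiples of the same 3D Laplacian symbol $\laplacian_p$ defined in \eqref{eq:lapl3D}. Consequently, any location of exact vanishing, order of vanishing, or exponential smallness of $\laplacian_p$ is inherited verbatim by each $\lambda_i(f^{p,\alpha,\beta})$. The whole task therefore reduces to analyzing $\laplacian_p$ itself, exactly as in the 2D argument behind Corollary~\ref{cor:zeros2D}.

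For part \ref{zero-an3D}, I would first localize the zeros of $\laplacian_p$. By Lemma~\ref{lem:h-f-e}\ref{eq:mp}--\ref{eq:sp}, $\mfm_p$ is strictly positive on $[-\pi,\pi]$, and $\mfs_p(\theta)=\mfm_{p-1}(\theta)(2-2\cos\theta)\geq 0$ vanishes on $[-\pi,\pi]$ only at $\theta=0$ and with order exactly $2$. Each of the three summands in \eqref{eq:lapl3D} is a product of nonnegative factors with $\gamma_i>0$, so $\laplacian_p(\theta_1,\theta_2,\theta_3)=0$ forces all three summands to vanish simultaneously. This in turn forces $\mfs_p(\theta_i)=0$, i.e., $\theta_i=0$, for each $i=1,2,3$, giving uniqueness. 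The order at the origin is read off from the Taylor expansions $\mfs_p(\theta)=\theta^2+O(\theta^4)$ and $\mfm_p(0)=1$: the leading behavior of $\laplacian_p$ near $(0,0,0)$ is the nondegenerate quadratic $\gamma_1\theta_1^2+\gamma_2\theta_2^2+\gamma_3\theta_3^2$, a zero of order exactly~$2$. Sandwiched between two $\Theta(|\ttheta|^2)$ quantities, every $\lambda_i(f^{p,\alpha,\beta})$ vanishes with the same order at the origin.

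For part \ref{zero-nm3D}, I would observe that whenever some coordinate equals $\pm\pi$, every summand of \eqref{eq:lapl3D} carries at least one factor of the form $\mfs_p(\pm\pi)$ or $\mfm_p(\pm\pi)$ (indeed, each summand contains exactly one $\mfs_p$ and two $\mfm_p$ factors, one per variable). Lemma~\ref{lem:h-f-e}\ref{eq:sp} directly delivers the exponential decay $\mfs_p(\pm\pi)\leq 2^{2-p}\max_{\theta\in[0,\pi]}\mfs_p(\theta)$. Applying the same identity one level higher, $\mfs_{p+1}(\theta)=\mfm_p(\theta)(2-2\cos\theta)$ evaluated at $\pm\pi$ gives $\mfm_p(\pm\pi)=\mfs_{p+1}(\pm\pi)/4$, which inherits the exponential smallness in $p$. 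Combined with the uniform bound $\mfm_p\leq 1$ and the boundedness of $\mfs_p$, this makes $\laplacian_p$ exponentially small in $p$ on each boundary face $\{\theta_i=\pm\pi\}$, and the squeeze transfers this ``numerical zero'' behavior to each $\lambda_i(f^{p,\alpha,\beta})$.

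The only point needing care is the preservation of the \emph{order} of the zero at the origin, but since the squeeze bounds are positive constant multiples of a single symbol $\laplacian_p$, no finer comparison is required and the order is conserved automatically; accordingly, there is no serious obstacle beyond the bookkeeping just described.
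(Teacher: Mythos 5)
Your proposal is correct and follows exactly the route the paper intends: the corollary is deduced from the squeeze in Theorem~\ref{thm:bounds3D} together with the known vanishing/decay properties of $\mfm_p$ and $\mfs_p$ from Lemma~\ref{lem:h-f-e}, which determine the behavior of the 3D Laplacian symbol $\laplacian_p$ in \eqref{eq:lapl3D}. Your write-up merely supplies the details (localization and order of the zero of $\laplacian_p$ at the origin, and the exponential smallness of $\mfs_p(\pm\pi)$ and $\mfm_p(\pm\pi)$ on the boundary faces) that the paper leaves implicit by citation.
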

The knowledge of the above vanishing behavior is important in the design of fast iterative solvers for linear systems involving $\mathcal{A}^{p,\alpha,\beta}_{\nn}$ as coefficient matrix (see Section~\ref{sub:multi}).

\subsection{Numerical examples} \label{sub:numer_symbol}
In the following, we verify the spectral results obtained in Sections \ref{sub:distr2D} and \ref{sub:distr3D} through several numerical examples. More precisely, we illustrate that
\begin{itemize}
	\item relation \eqref{eq:distr} holds for $d=2,3$ and $f^{p,\alpha,\beta}$ given by \eqref{eq:f2D} and \eqref{eq:f3D}, respectively;
	\item the eigenvalue functions $\lambda_i(f^{p,\alpha,\beta})$, $i=1,\ldots,d$ satisfy the bounds in \eqref{eq:behavior_eig2D} for $d=2$, and the bounds in \eqref{eq:behavior_eig3D} for $d=3$.
\end{itemize}
Let us fix $\nn:=(n,\ldots,n)\in\mathbb{N}^d$, $\pp:=(p,\ldots,p)\in\mathbb{N}^d$,
and $\mm:=\nn+\pp-{\bm 2}$. We start by defining the following equispaced grid on $[0,\pi]^d$,
\begin{equation*}\label{Gnp}
\Gamma:=\left\{\ttheta_{\kk}:=\frac{\kk\pi}{\mm}: \kk={\bm 1}, \dots, \mm\right\}.
\end{equation*}
Then, we denote by $\Lambda_i$ the set of all evaluations of $\lambda_i(f^{p,\alpha,\beta})$ on $\Gamma$ for fixed $i\in\{1,\ldots,d\}$, i.e.,
\begin{equation*}
\Lambda_i:=\left\{\lambda_i (f^{p,\alpha,\beta}(\ttheta_{\bm 1})), \dots ,\lambda_i (f^{p,\alpha,\beta}(\ttheta_{\mm}))\right\},
\end{equation*}
and by $\Lambda$ the set of all evaluations of $\lambda_i(f^{p,\alpha,\beta})$ on $\Gamma$ varying $i$, i.e.,
\begin{equation*}
\Lambda:=\left\{\Lambda_1,\Lambda_2,\ldots,\Lambda_d\right\}.
\end{equation*}
Note that it suffices to consider only the subdomain $[0,\pi]^d$ because the matrix-valued symbols \eqref{eq:f2D} and \eqref{eq:f3D} are symmetric on $[-\pi,\pi]^d$, and hence also their eigenvalue functions.
Finally, we consider
\begin{equation*}\label{L}
\Delta:=\left\{\laplacian_p(\ttheta_{\bm 1}), \dots ,\laplacian_p(\ttheta_{\mm})\right\},
\end{equation*}
the set of evaluations of the $d$-variate Laplacian symbol $\laplacian_p(\theta_1,\ldots,\theta_d)$ on $\Gamma$,
defined in \eqref{eq:lapl2D} and \eqref{eq:lapl3D} for $d=2,3$.

\begin{figure}[t!]
	\centering
	\begin{subfigure}[c]{.45\textwidth}
		\includegraphics[width=\textwidth]{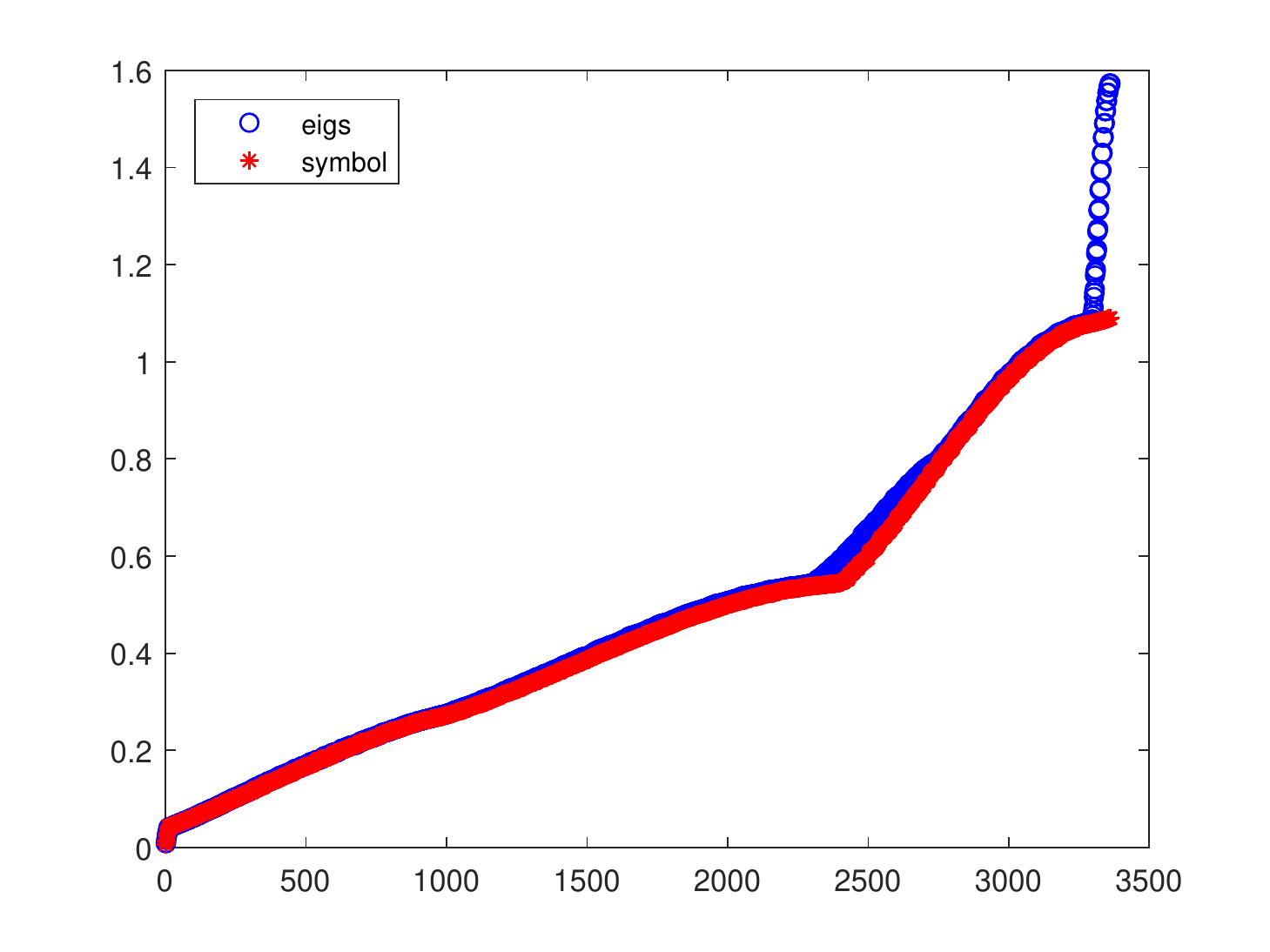}
		\subcaption{$\alpha=1$, $\beta=0.5$}
	\end{subfigure}
	\begin{subfigure}[c]{.45\textwidth}
		\includegraphics[width=\textwidth]{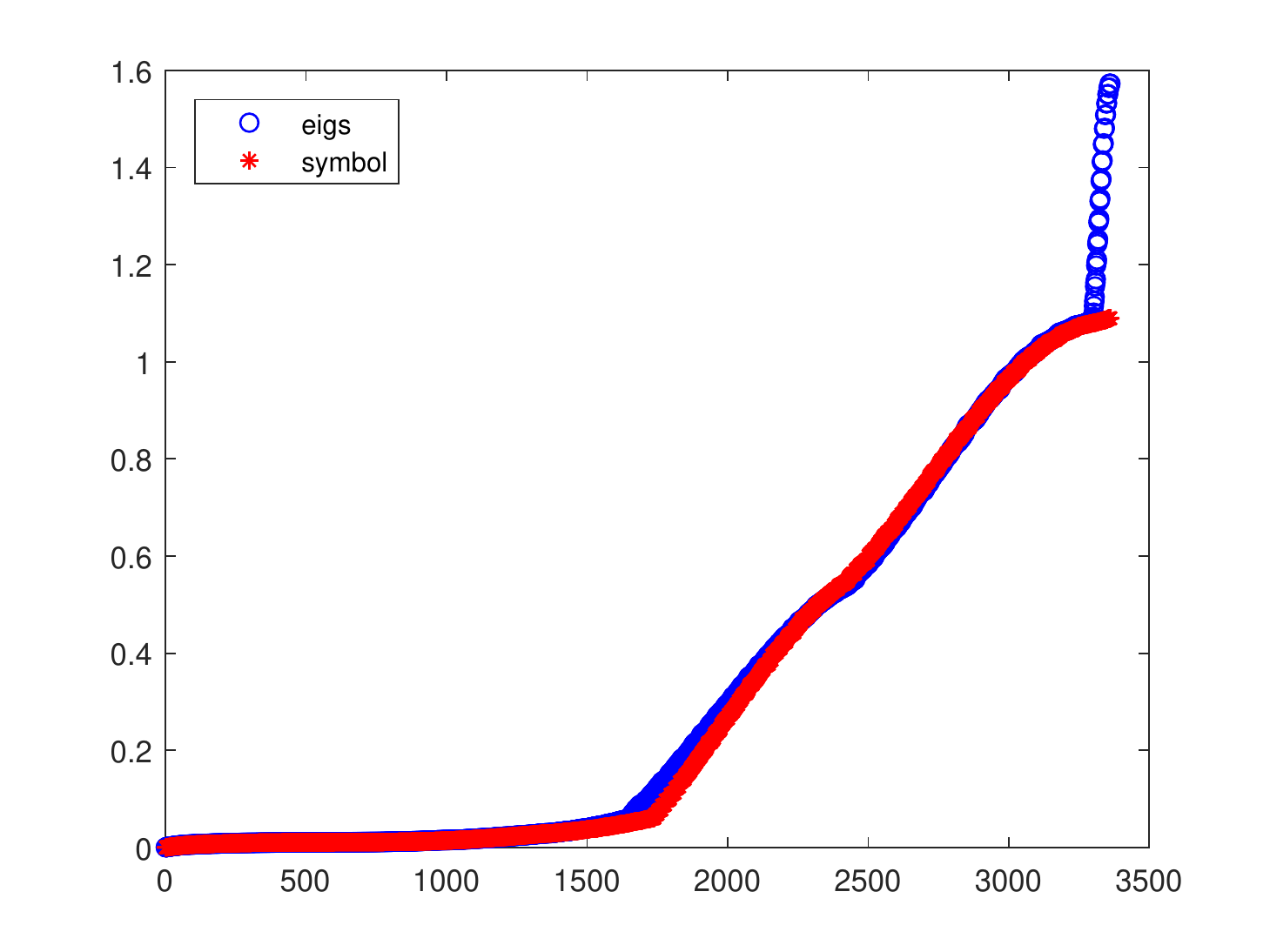}
		\subcaption{$\alpha=1$, $\beta=0.01$}
	\end{subfigure}
	\caption{2D case. Comparison of the eigenvalues of $\mathcal{A}^{p,\alpha,\beta}_{\nn}$ (\textcolor{blue}{$\mycircle$}) with $\Lambda=\{\Lambda_1,\Lambda_2\}$ collecting uniform samples of $\lambda_i(f^{p,\alpha,\beta})$, $i=1,2$ ordered in ascending way (\textcolor{red}{$\ast$}), for $n=40$, $p=3$, $\alpha=1$, and $\beta\in\{0.5,0.01\}$.} 
	\label{fig:comp2d}
	\medskip
	\centering
	\begin{subfigure}[c]{.45\textwidth}
		\includegraphics[width=\textwidth]{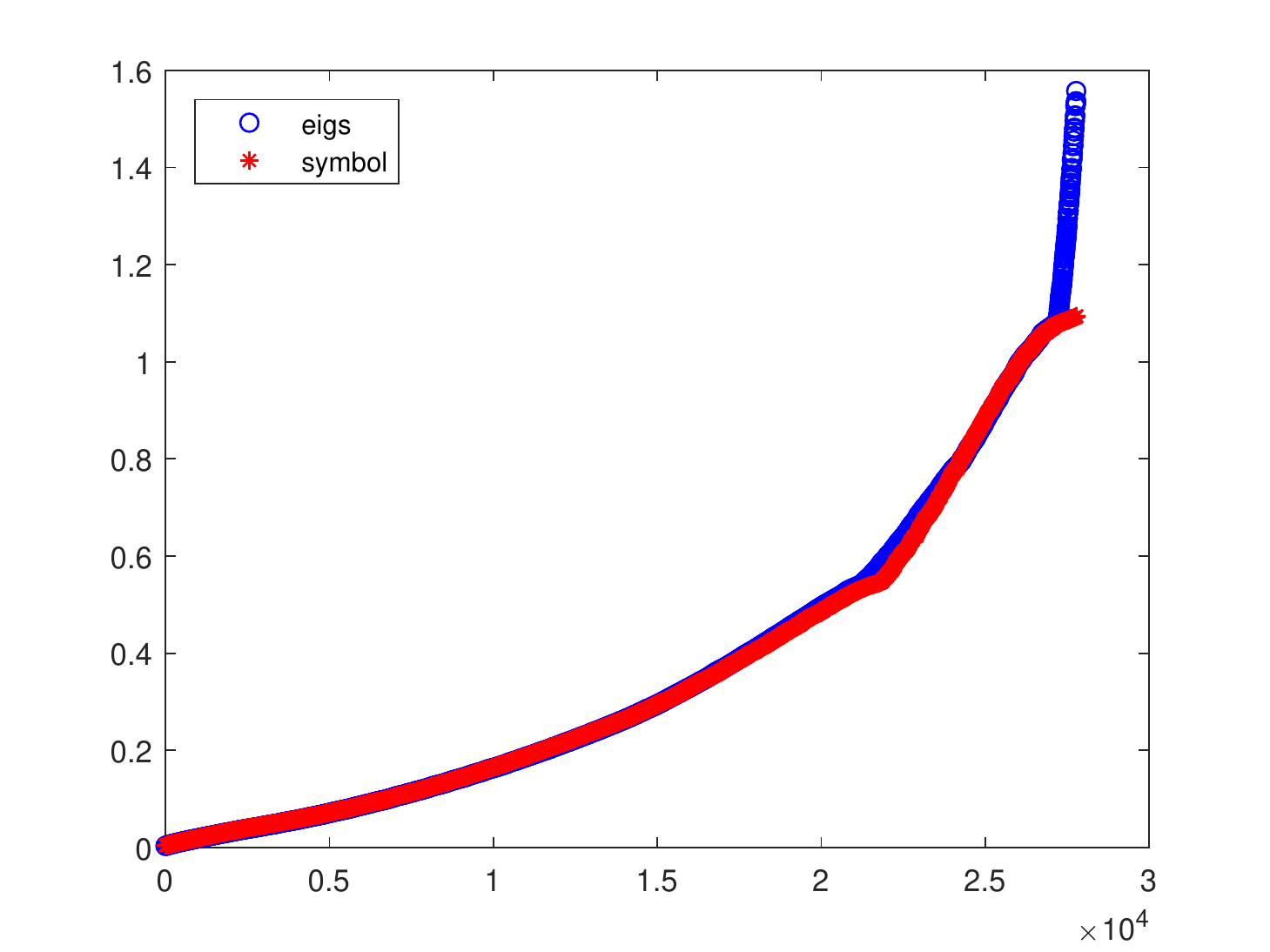}
		\subcaption{$\alpha=1$, $\beta=0.5$}
	\end{subfigure}
	\begin{subfigure}[c]{.45\textwidth}
		\includegraphics[width=\textwidth]{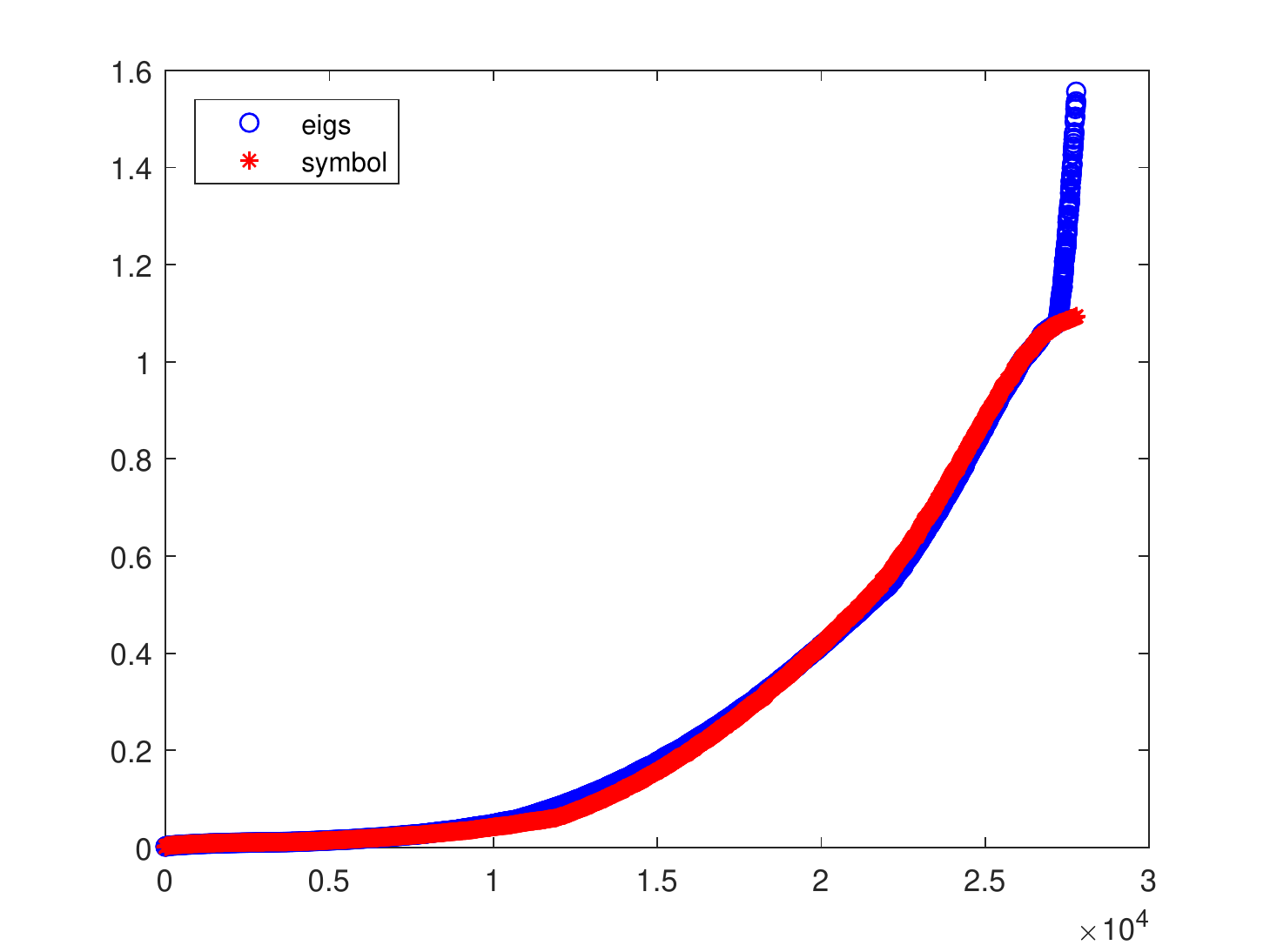}
		\subcaption{$\alpha=1$, $\beta=0.01$}
	\end{subfigure}
	\caption{3D case. Comparison of the eigenvalues of $n\mathcal{A}^{p,\alpha,\beta}_{\nn}$ (\textcolor{blue}{$\mycircle$}) with $\Lambda=\{\Lambda_1,\Lambda_2,\Lambda_3\}$ collecting uniform samples of $\lambda_i(f^{p,\alpha,\beta})$, $i=1,2,3$ ordered in ascending way (\textcolor{red}{$\ast$}), for $n=20$, $p=3$, $\alpha=1$, and $\beta\in\{0.5,0.01\}$.} 
	\label{fig:comp3d}
\end{figure}

\paragraph{Spectral distribution}
Referring to Section~\ref{sub:symbol}, we verify numerically relation \eqref{eq:distr} by comparing the eigenvalues of $n^{d-2}\mathcal{A}^{p,\alpha,\beta}_{\nn}$ with a uniform sampling of the eigenvalue functions $\lambda_i(f^{p,\alpha,\beta})$, $i=1,\ldots,d$ and $d=2,3$. In this view, for $d=2$, Figure \ref{fig:comp2d} depicts both the eigenvalues of $\mathcal{A}^{p,\alpha,\beta}_{\nn}$ defined in \eqref{eq:matr_Amu} and the values collected in $\Lambda=\{\Lambda_1,\Lambda_2\}$, ordered in ascending way. Here, $n=40$, $p=3$, $\alpha=1$, and $\beta\in\{0.5,0.01\}$.
We notice that, as predicted by the theory, independently of $\beta$,  the considered sampling of $\lambda_i(f^{p,\alpha,\beta})$, $i=1,2$ 
describes accurately the behavior of the eigenvalues of $\mathcal{A}^{p,\alpha,\beta}_{\nn}$, up to some outliers.
Furthermore, when $\beta=0.01$, we observe that Figure \ref{fig:comp2d}(b) clearly shows not only the ill-conditioning of the resulting matrices, but also the intrinsic ill-posed nature of the problem as $\beta$ tends to zero. Indeed, we see that about half of the spectrum is almost zero in perfect agreement with our theoretical findings.

Similarly, for $d=3$, in Figure \ref{fig:comp3d} we show both the eigenvalues of $n\mathcal{A}^{p,\alpha,\beta}_{\nn}$, with $\mathcal{A}^{p,\alpha,\beta}_{\nn}$ defined as in \eqref{eq:matr_Amu3D}, and the values collected in $\Lambda=\{\Lambda_1,\Lambda_2,\Lambda_3\}$, ordered in ascending way.
Here, $n=20$, $p=3$, $\alpha=1$, and $\beta\in\{0.5,0.01\}$.
We skipped the analytical computation of the eigenvalue functions $\lambda_i(f^{p,\alpha,\beta})$, $i=1,2,3$ and we directly evaluated them on $\Gamma\subset[0,\pi]^3$ according to the following algorithm:
\begin{itemize}
	\item evaluate the matrix-valued symbol $f^{p,\alpha,\beta}$ at all points of $\Gamma$;
	\item compute the eigenvalues of the obtained $3\times3$ matrix for every fixed grid point;
	\item collect all the smallest eigenvalues of these matrices to obtain a sampling of the eigenvalue function $\lambda_1(f^{p,\alpha,\beta})$ (stored in the set $\Lambda_1$), and so on, until the largest eigenvalues to get a sampling of the eigenvalue function $\lambda_3(f^{p,\alpha,\beta})$ (stored in the set $\Lambda_3$).
\end{itemize}
As expected, also for $d=3$, the considered sampling of $\lambda_i(f^{p,\alpha,\beta})$, $i=1,2,3$ describes accurately the behavior of the eigenvalues of $n\mathcal{A}^{p,\alpha,\beta}_{\nn}$, up to some outliers. Furthermore, for $\beta$ tending to zero, we are dealing with an ill-posed problem, and already for $\beta=0.01$, we see that one third of the spectrum is almost zero.


\begin{figure}[t!]
	\centering
	\begin{subfigure}[c]{.47\textwidth}
		\includegraphics[width=\textwidth]{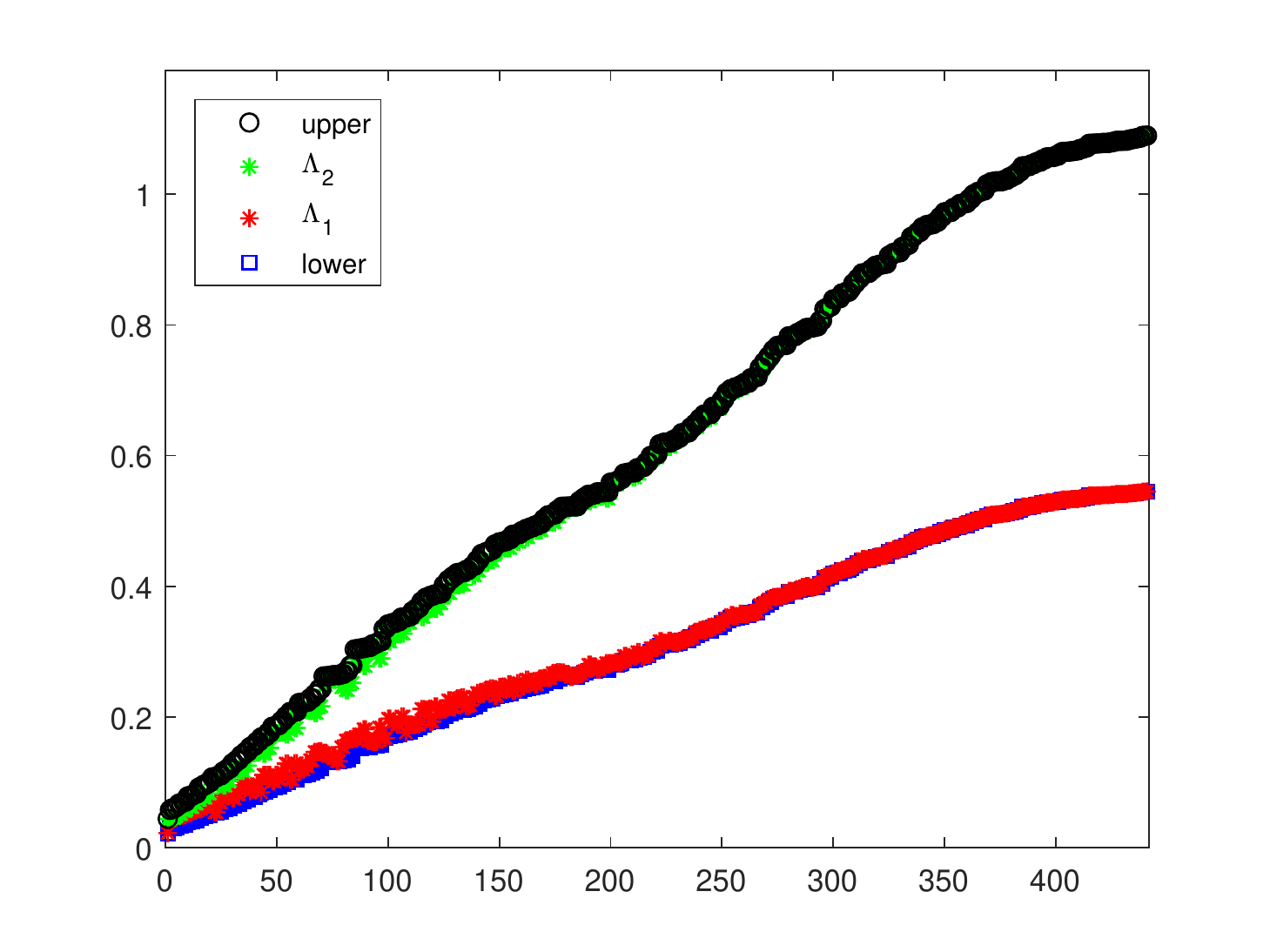}
		\subcaption{full sampling}
	\end{subfigure}
	\begin{subfigure}[c]{.47\textwidth}
		\includegraphics[width=\textwidth]{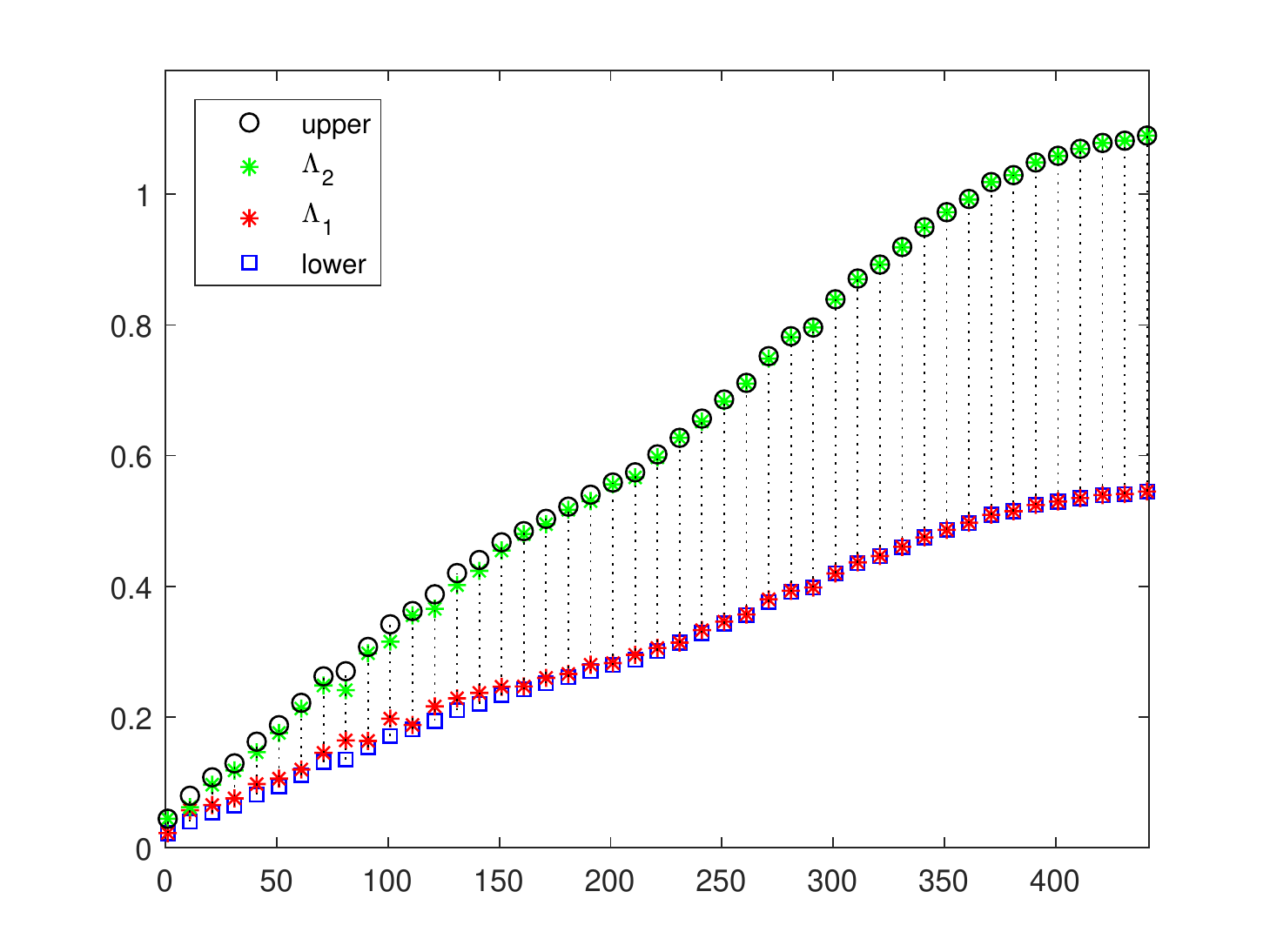}
		\subcaption{1:10 subsampling}
	\end{subfigure}
	\caption{2D case. Pointwise comparison of the lower bound $\beta \laplacian_p$ (\textcolor{blue}{$\mysquare$}) and the upper bound $\alpha \laplacian_p$ (\textcolor{black}{$\mycircle$}) with the eigenvalue functions $\lambda_i(f^{p,\alpha,\beta})$, $i=1,2$ (\textcolor{red}{$\ast$},\textcolor{green}{$\ast$}) for $n=20$, $p=3$, $\alpha=1$, and $\beta=0.5$.}\label{bounds2D305}
\end{figure}

\begin{figure}[t!]
	\centering
	\begin{subfigure}[c]{.47\textwidth}
		\includegraphics[width=\textwidth]{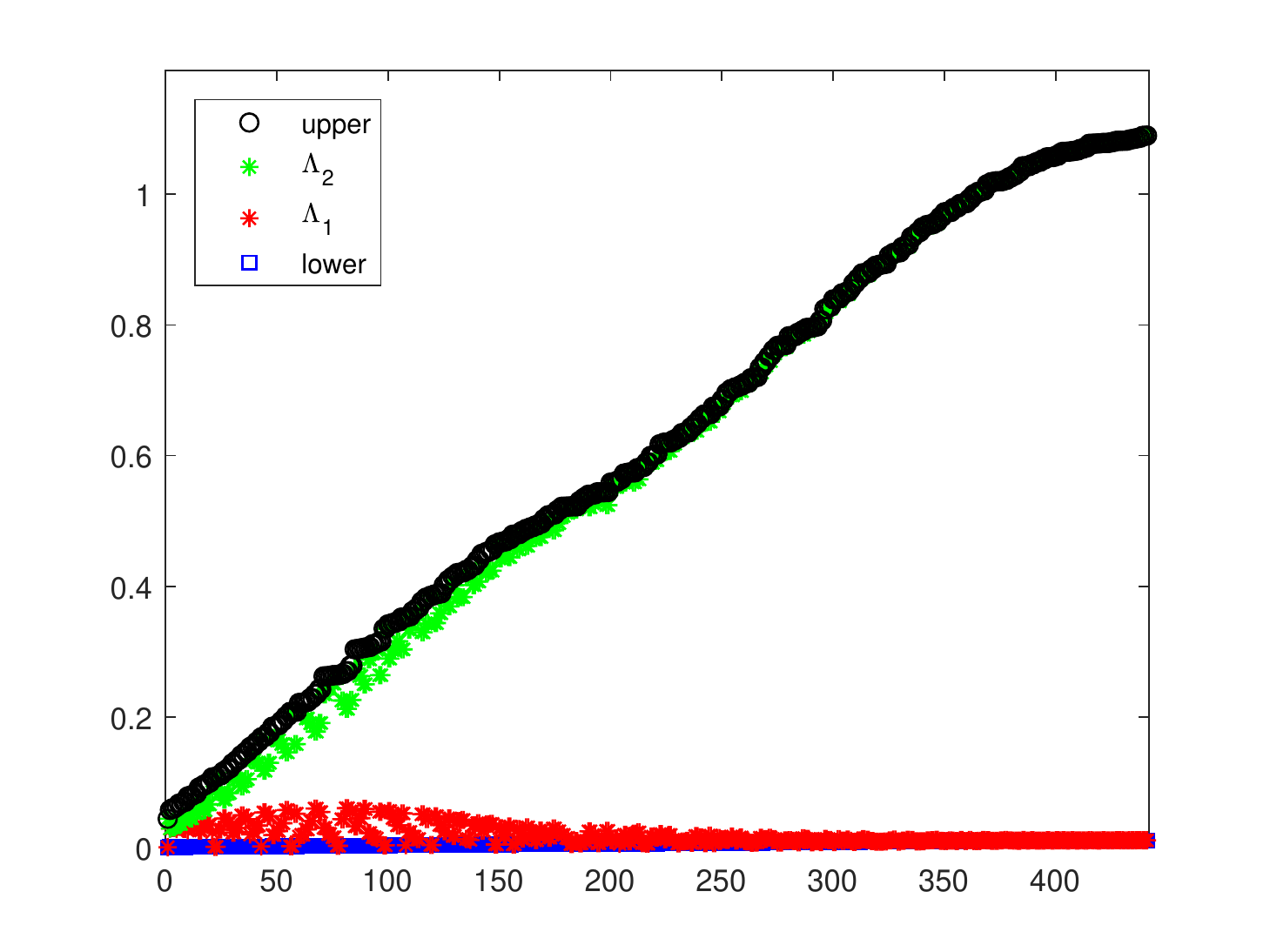}
		\subcaption{full sampling}
	\end{subfigure}
	\begin{subfigure}[c]{.47\textwidth}
		\includegraphics[width=\textwidth]{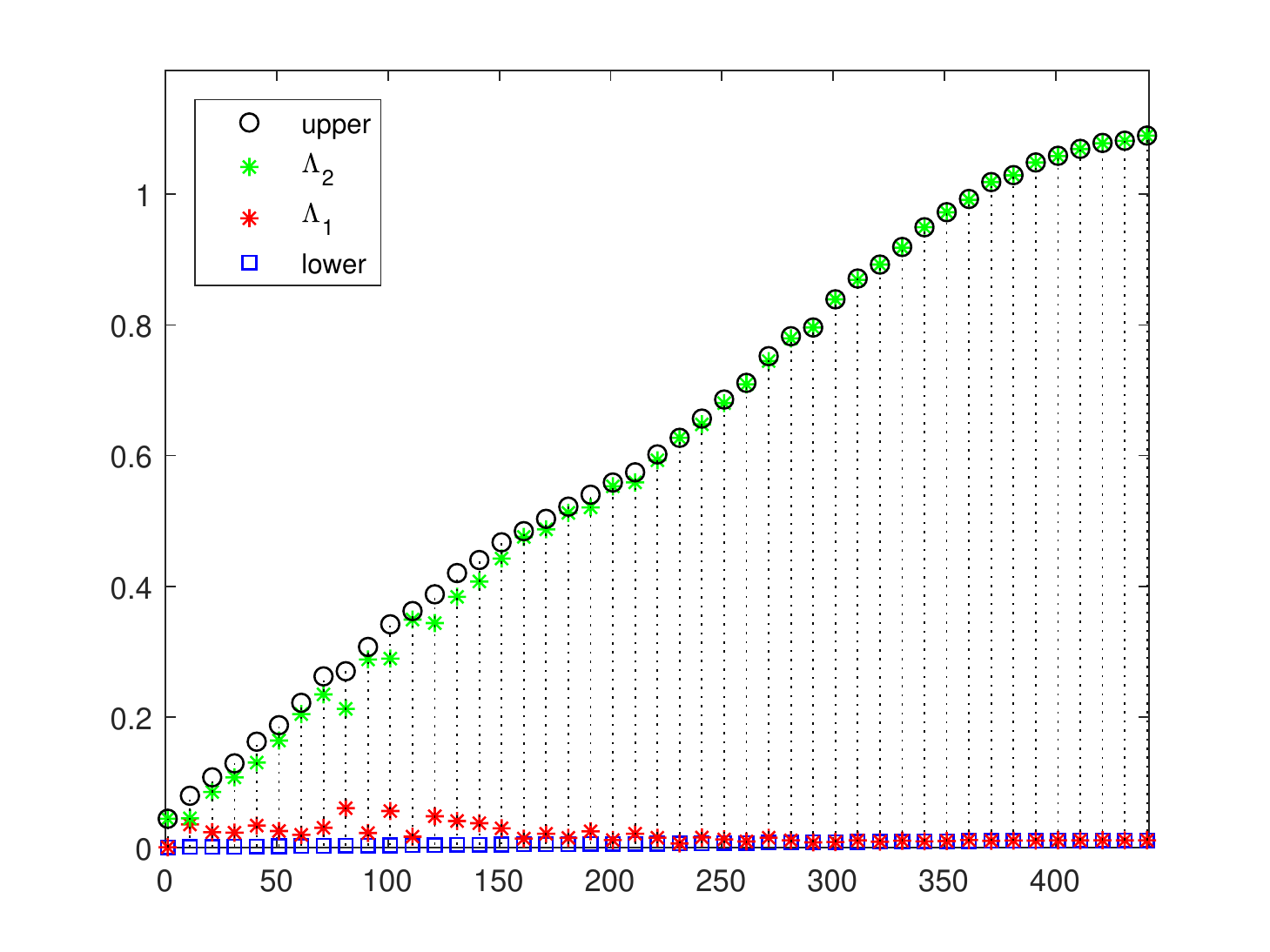}
		\subcaption{1:10 subsampling}
	\end{subfigure}
	\caption{2D case. Pointwise comparison of the lower bound $\beta \laplacian_p$ (\textcolor{blue}{$\mysquare$}) and the upper bound $\alpha \laplacian_p$ (\textcolor{black}{$\mycircle$}) with the eigenvalue functions $\lambda_i(f^{p,\alpha,\beta})$, $i=1,2$ (\textcolor{red}{$\ast$},\textcolor{green}{$\ast$}) for $n=20$, $p=3$, $\alpha=1$, and $\beta=0.01$.}\label{bounds2D3001}
\end{figure}

\begin{figure}[t!]
	\centering
	\begin{subfigure}[c]{.47\textwidth}
		\includegraphics[width=\textwidth]{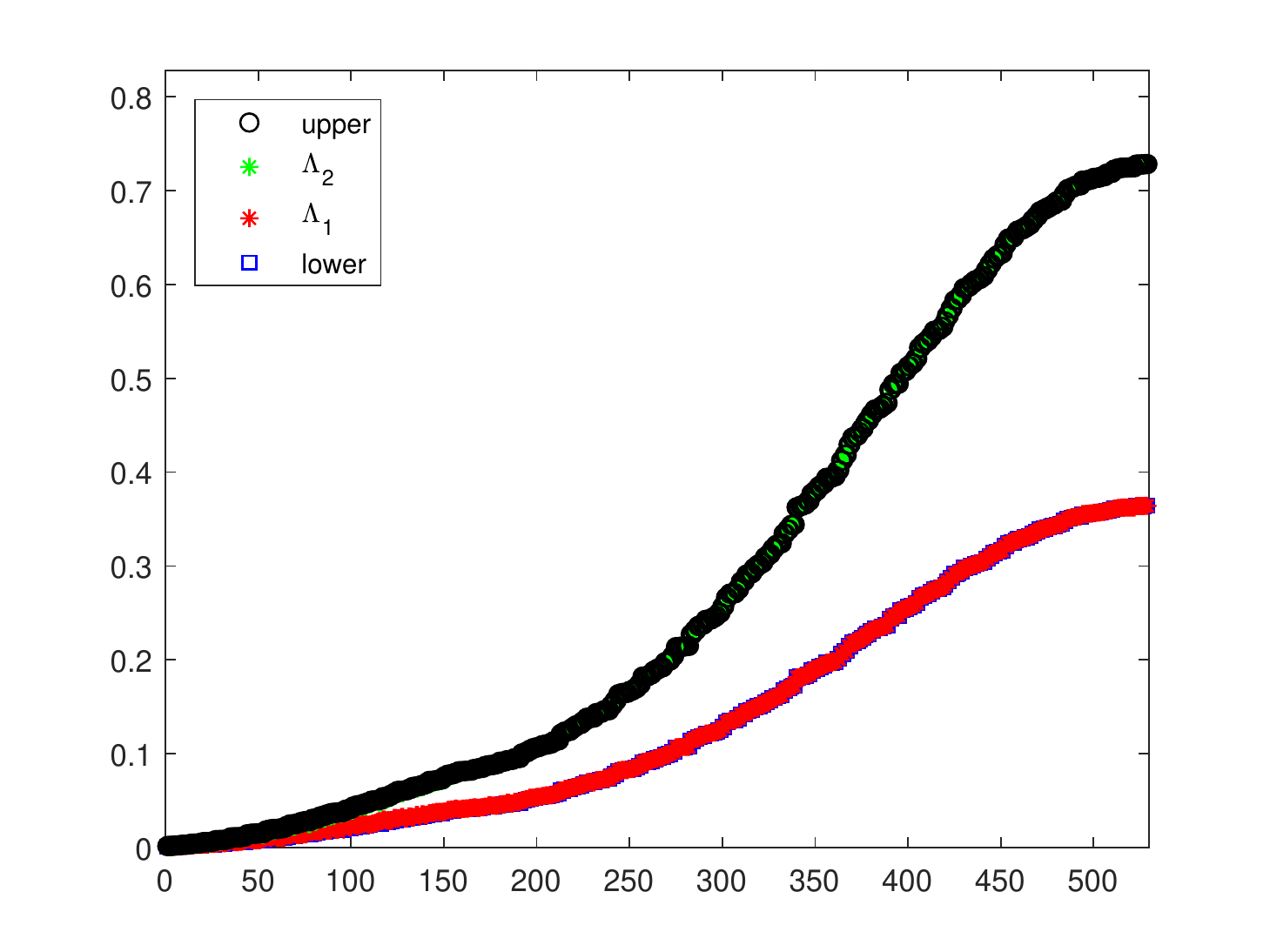}
		\subcaption{full sampling}
	\end{subfigure}
	\begin{subfigure}[c]{.47\textwidth}
		\includegraphics[width=\textwidth]{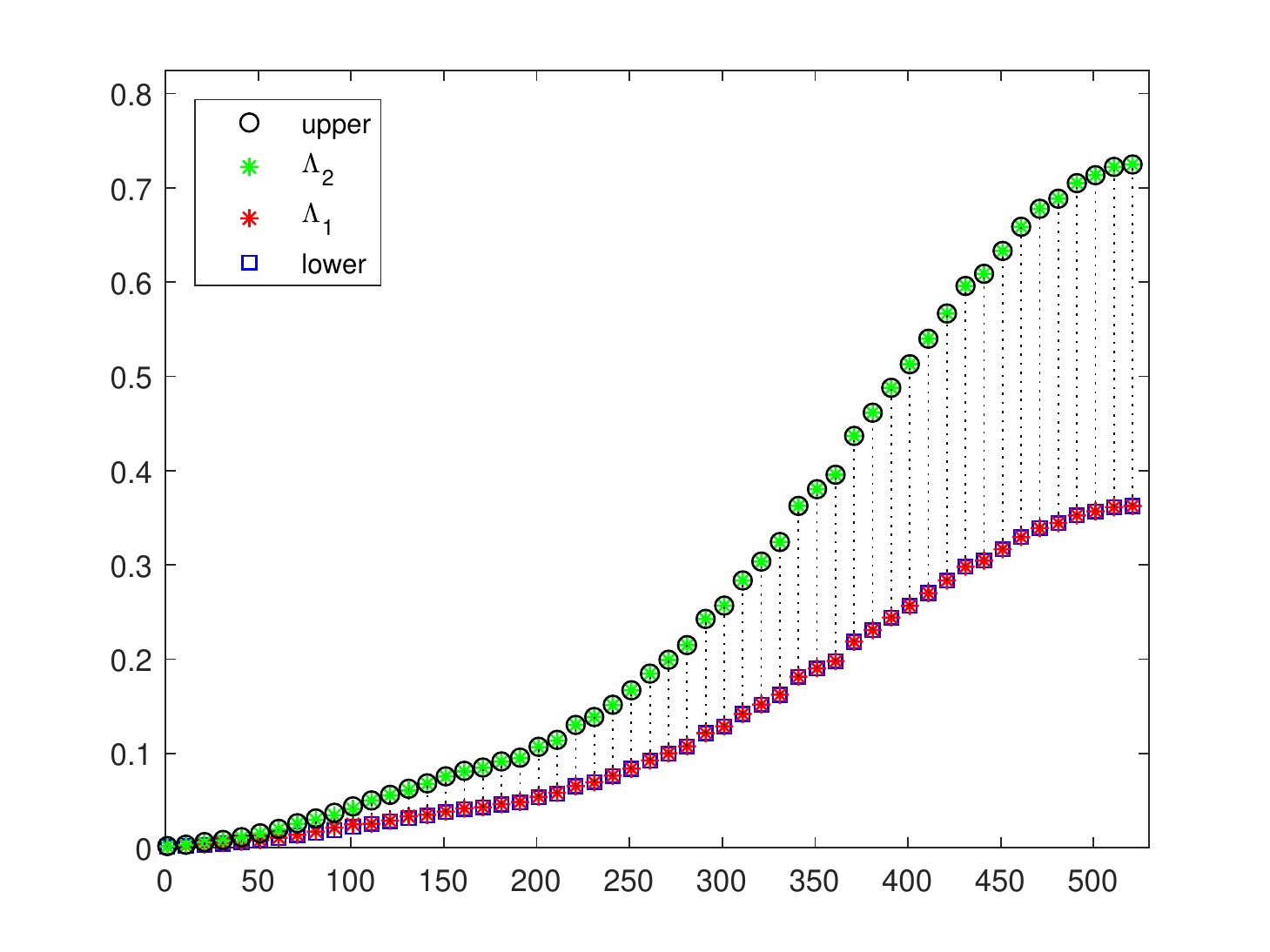}
		\subcaption{1:10 subsampling}
	\end{subfigure}
	\caption{2D case. Pointwise comparison of the lower bound $\beta \laplacian_p$ (\textcolor{blue}{$\mysquare$}) and the upper bound $\alpha \laplacian_p$ (\textcolor{black}{$\mycircle$}) with the eigenvalue functions $\lambda_i(f^{p,\alpha,\beta})$, $i=1,2$ (\textcolor{red}{$\ast$},\textcolor{green}{$\ast$}) for $n=20$, $p=5$, $\alpha=1$, and $\beta=0.5$.}\label{bounds2D505}
\end{figure}
\begin{figure}[t!]
	\centering
	\begin{subfigure}[c]{.47\textwidth}
		\includegraphics[width=\textwidth]{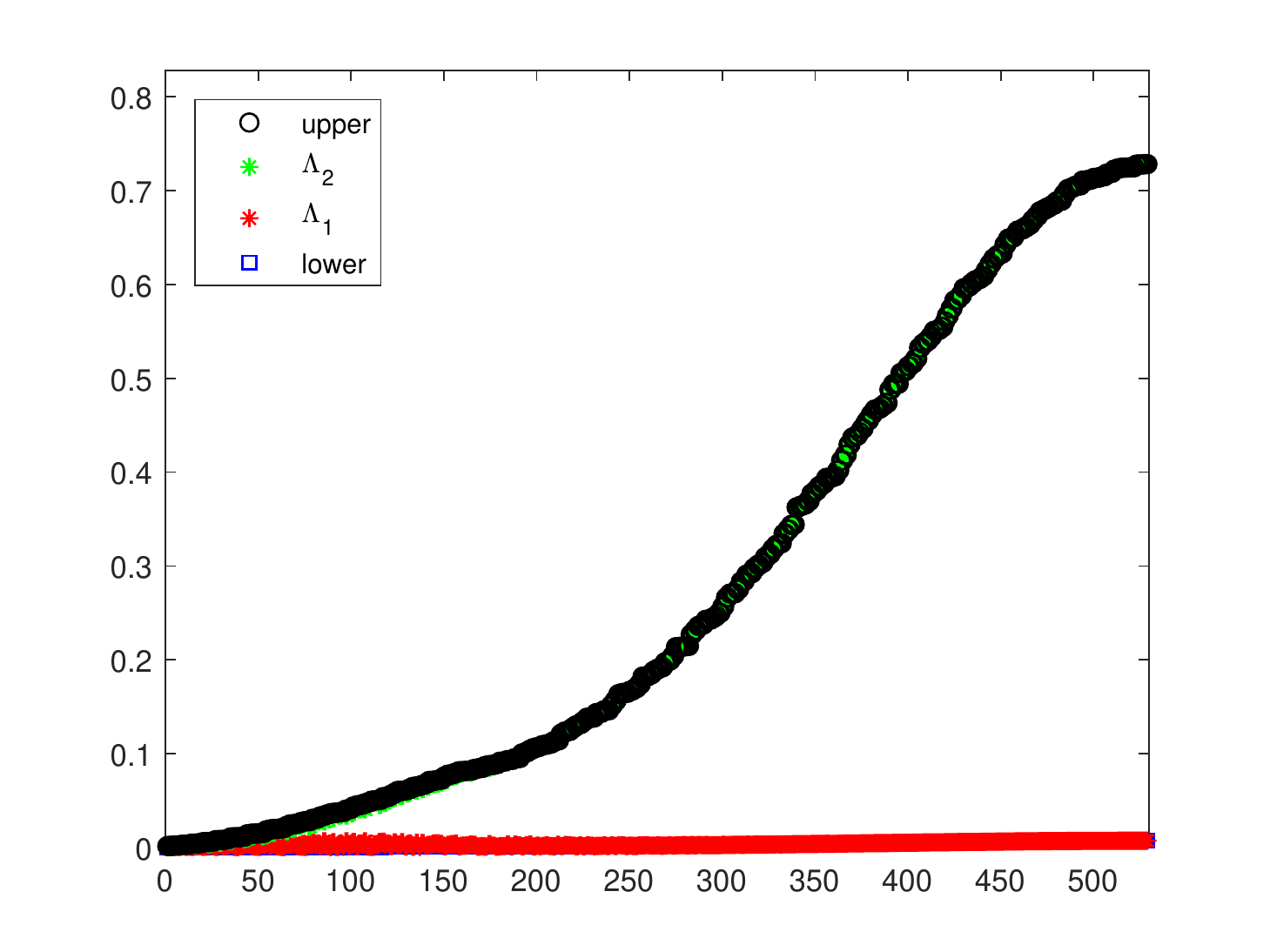}
		\subcaption{full sampling}
	\end{subfigure}
	\begin{subfigure}[c]{.47\textwidth}
		\includegraphics[width=\textwidth]{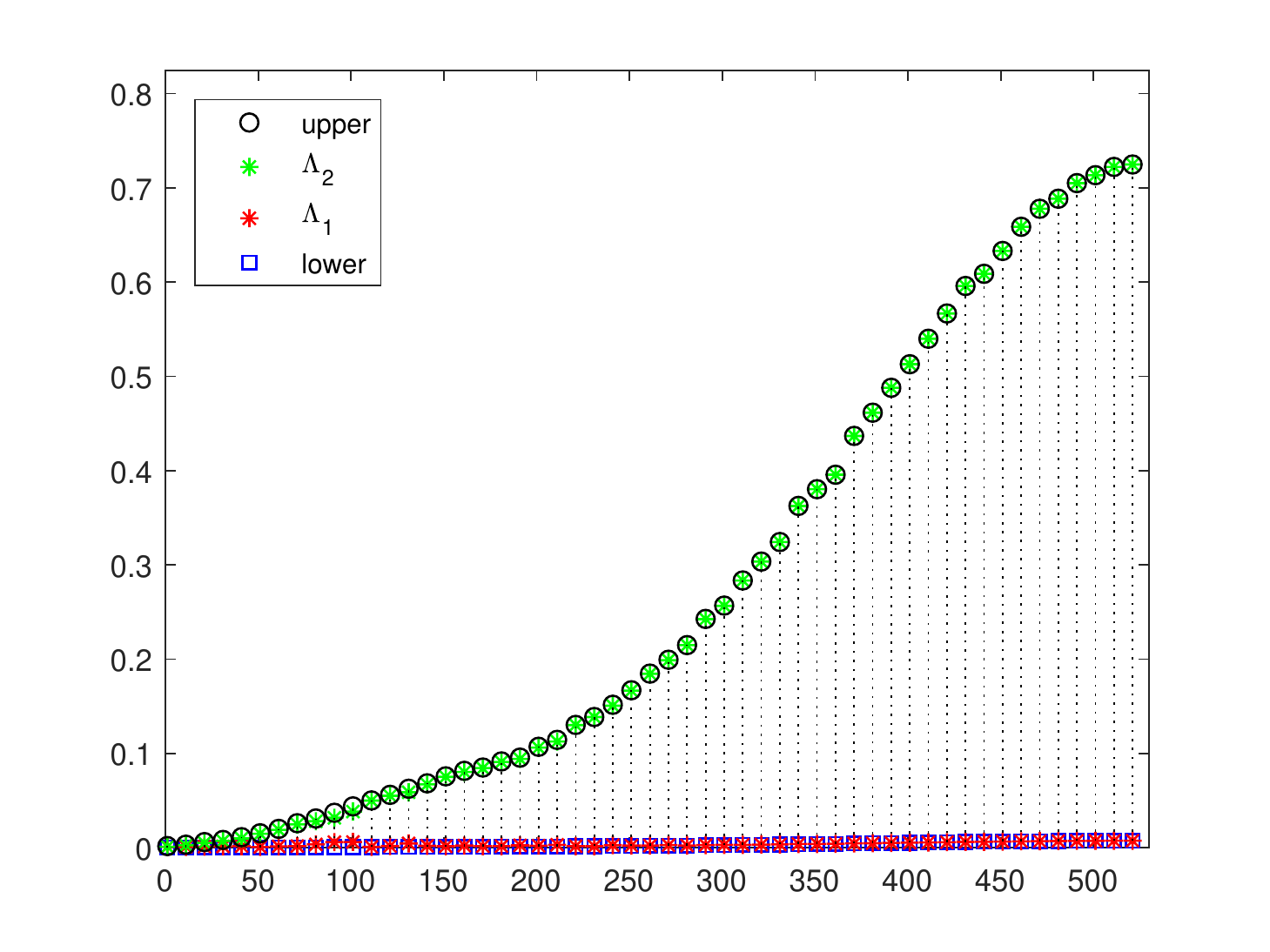}
		\subcaption{1:10 subsampling}
	\end{subfigure}
	\caption{2D case. Pointwise comparison of the lower bound $\beta \laplacian_p$ (\textcolor{blue}{$\mysquare$}) and the upper bound $\alpha \laplacian_p$ (\textcolor{black}{$\mycircle$}) with the eigenvalue functions $\lambda_i(f^{p,\alpha,\beta})$, $i=1,2$ (\textcolor{red}{$\ast$},\textcolor{green}{$\ast$}) for $n=20$, $p=5$, $\alpha=1$, and $\beta=0.01$.}\label{bounds2D5001}
\end{figure}

\begin{figure}[t!]
	\centering
	\begin{subfigure}[c]{.47\textwidth}
		\includegraphics[width=\textwidth]{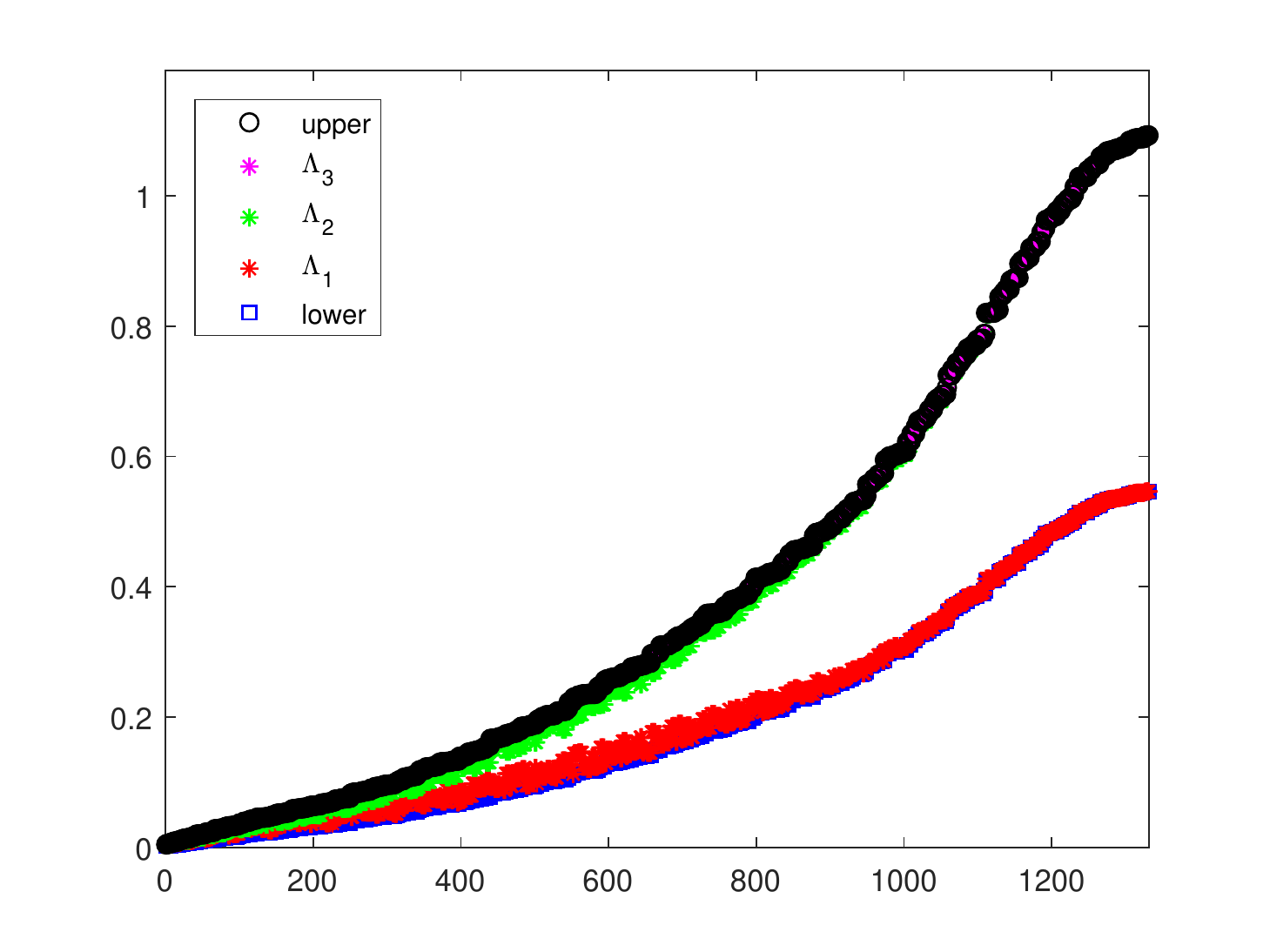}
		\subcaption{full sampling}
	\end{subfigure}
	\begin{subfigure}[c]{.47\textwidth}
		\includegraphics[width=\textwidth]{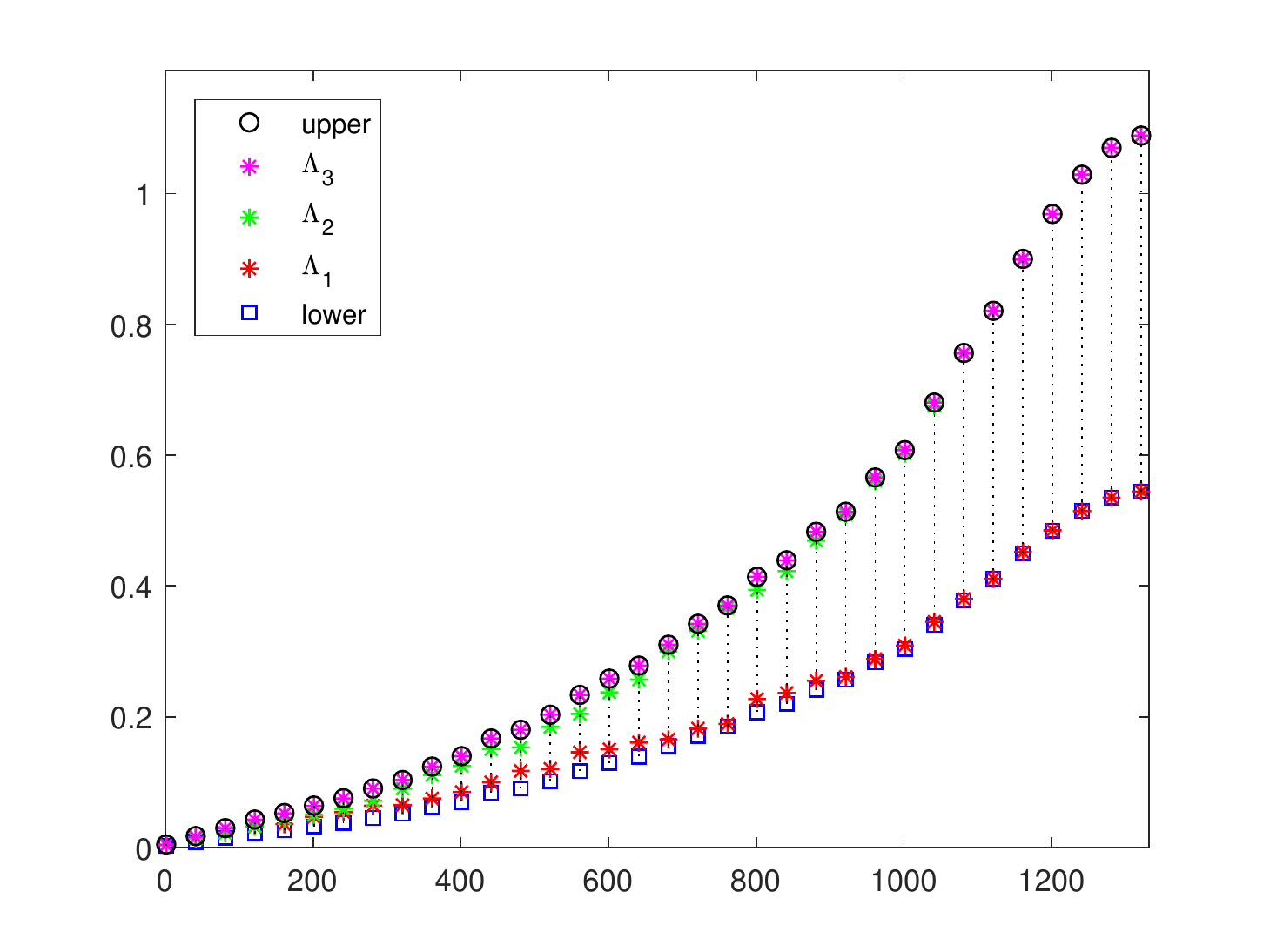}
		\subcaption{1:40 subsampling}
	\end{subfigure}
	\caption{3D case. Pointwise comparison of the lower bound $\beta \laplacian_p$ (\textcolor{blue}{$\mysquare$}) and the upper bound $\alpha \laplacian_p$ (\textcolor{black}{$\mycircle$}) with the eigenvalue functions $\lambda_i(f^{p,\alpha,\beta})$, $i=1,2,3$ (\textcolor{red}{$\ast$},\textcolor{green}{$\ast$},\textcolor{magenta}{$\ast$}) for $n=10$, $p=3$, $\alpha=1$, and $\beta=0.5$.
	}\label{bounds305}
\end{figure}

\begin{figure}[t!]
	\centering
	\begin{subfigure}[c]{.47\textwidth}
		\includegraphics[width=\textwidth]{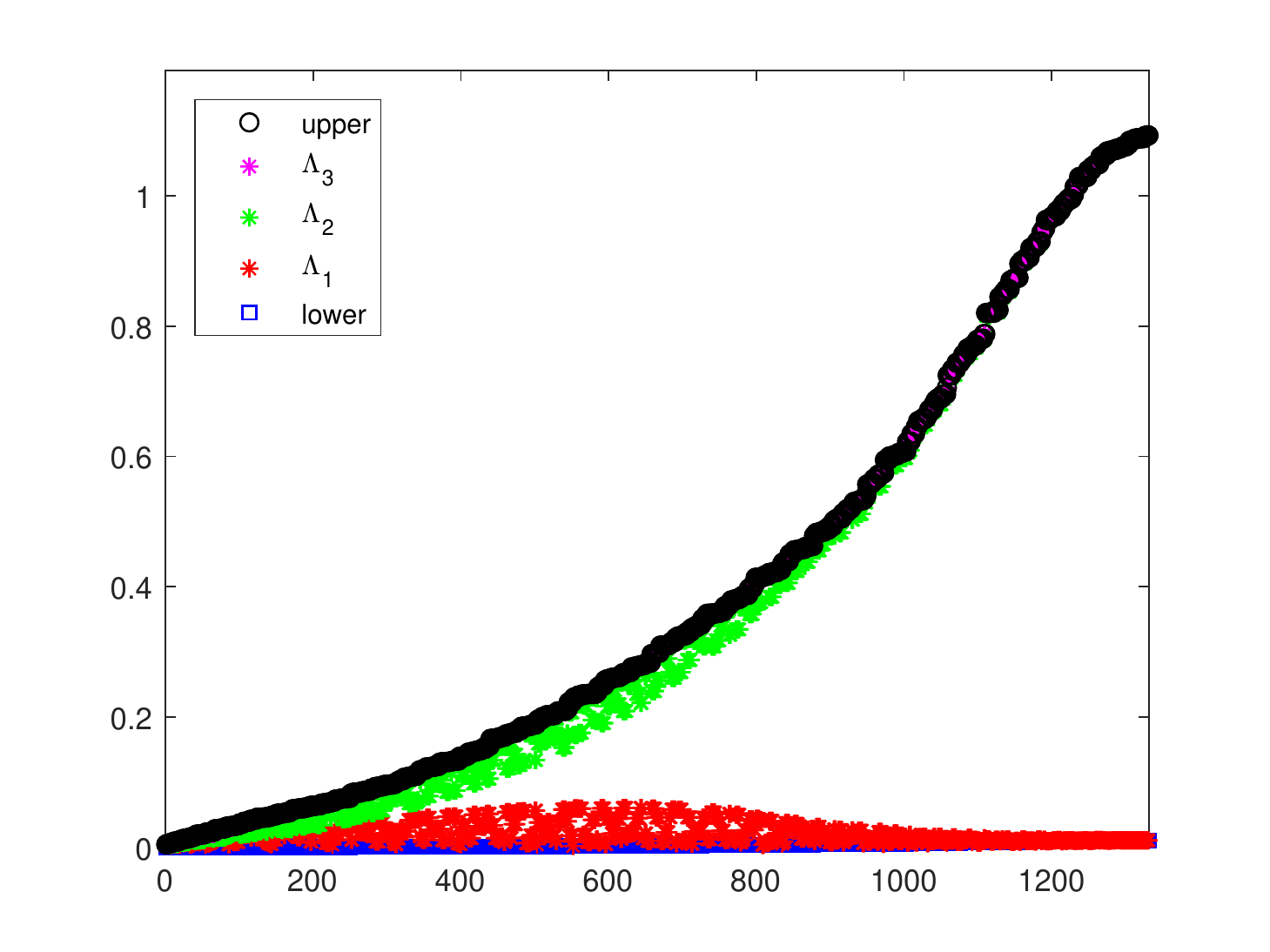}
		\subcaption{full sampling}
	\end{subfigure}
	\begin{subfigure}[c]{.47\textwidth}
		\includegraphics[width=\textwidth]{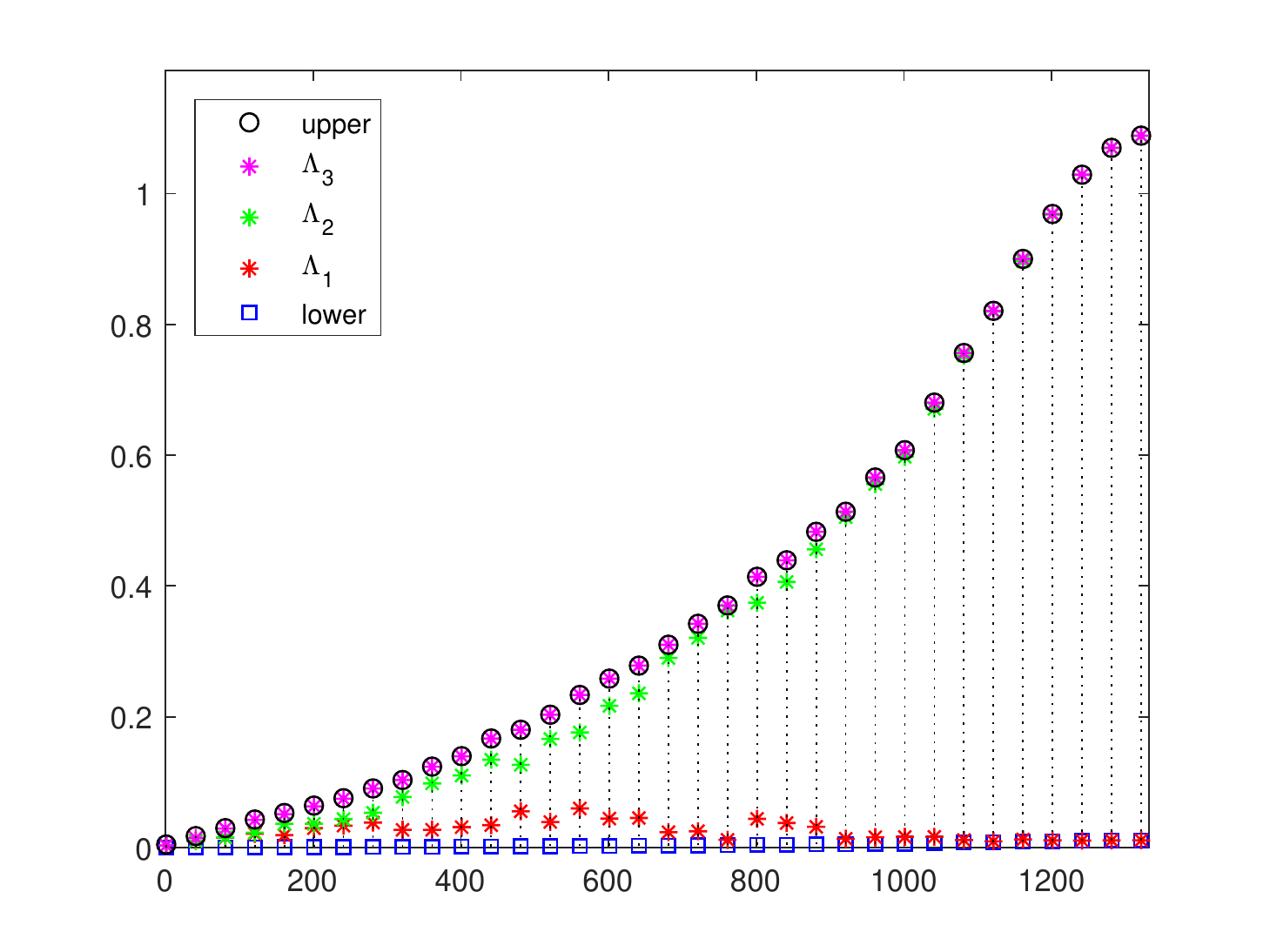}
		\subcaption{1:40 subsampling}
	\end{subfigure}
	\caption{3D case. Pointwise comparison of the lower bound $\beta \laplacian_p$ (\textcolor{blue}{$\mysquare$}) and the upper bound $\alpha \laplacian_p$ (\textcolor{black}{$\mycircle$}) with the eigenvalue functions $\lambda_i(f^{p,\alpha,\beta})$, $i=1,2,3$ (\textcolor{red}{$\ast$},\textcolor{green}{$\ast$},\textcolor{magenta}{$\ast$}) for $n=10$, $p=3$, $\alpha=1$, and $\beta=0.01$.}\label{bounds3001}
\end{figure}

\begin{figure}[t!]
	\centering
	\begin{subfigure}[c]{.47\textwidth}
		\includegraphics[width=\textwidth]{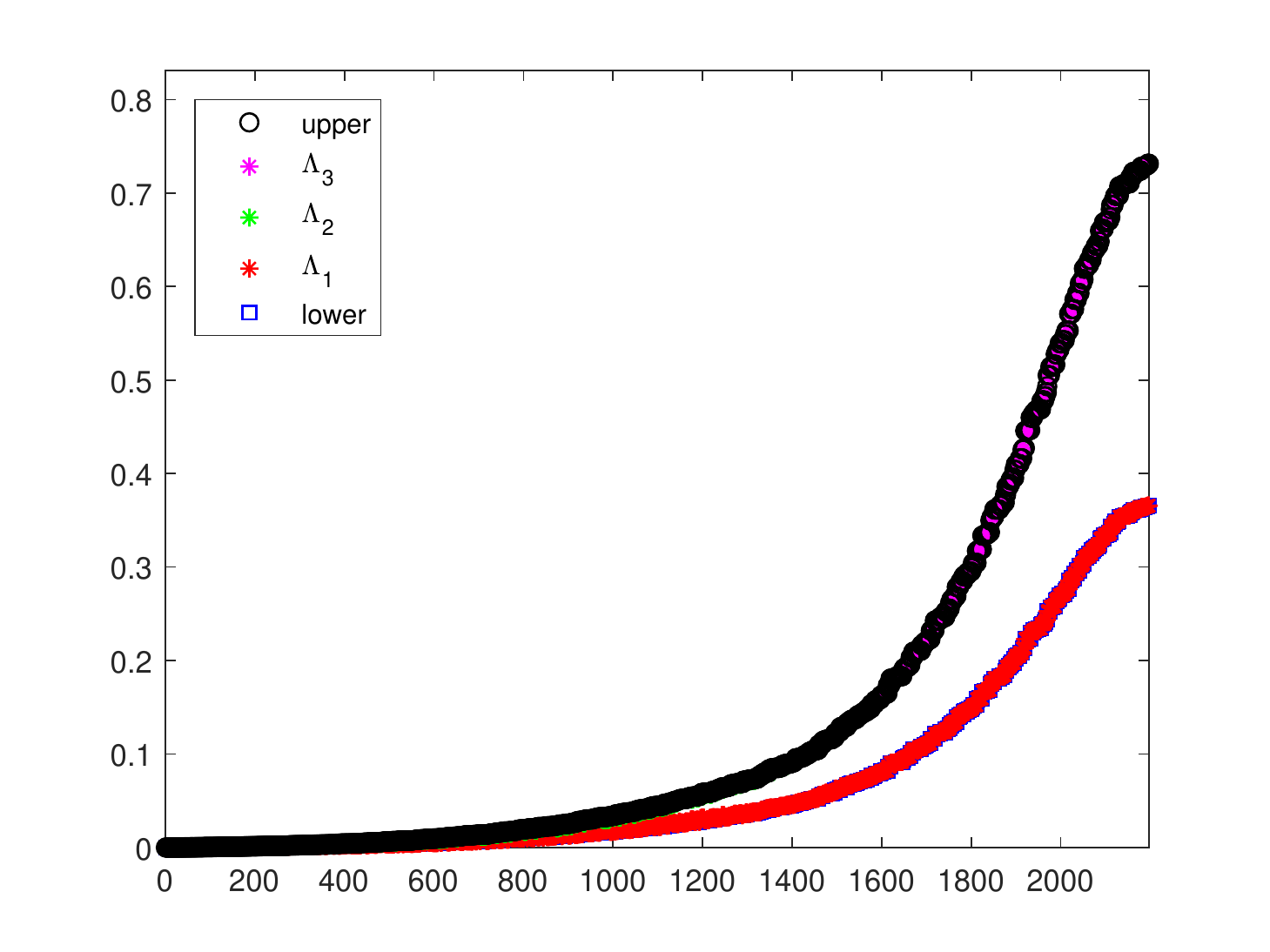}
		\subcaption{full sampling}
	\end{subfigure}
	\begin{subfigure}[c]{.47\textwidth}
		\includegraphics[width=\textwidth]{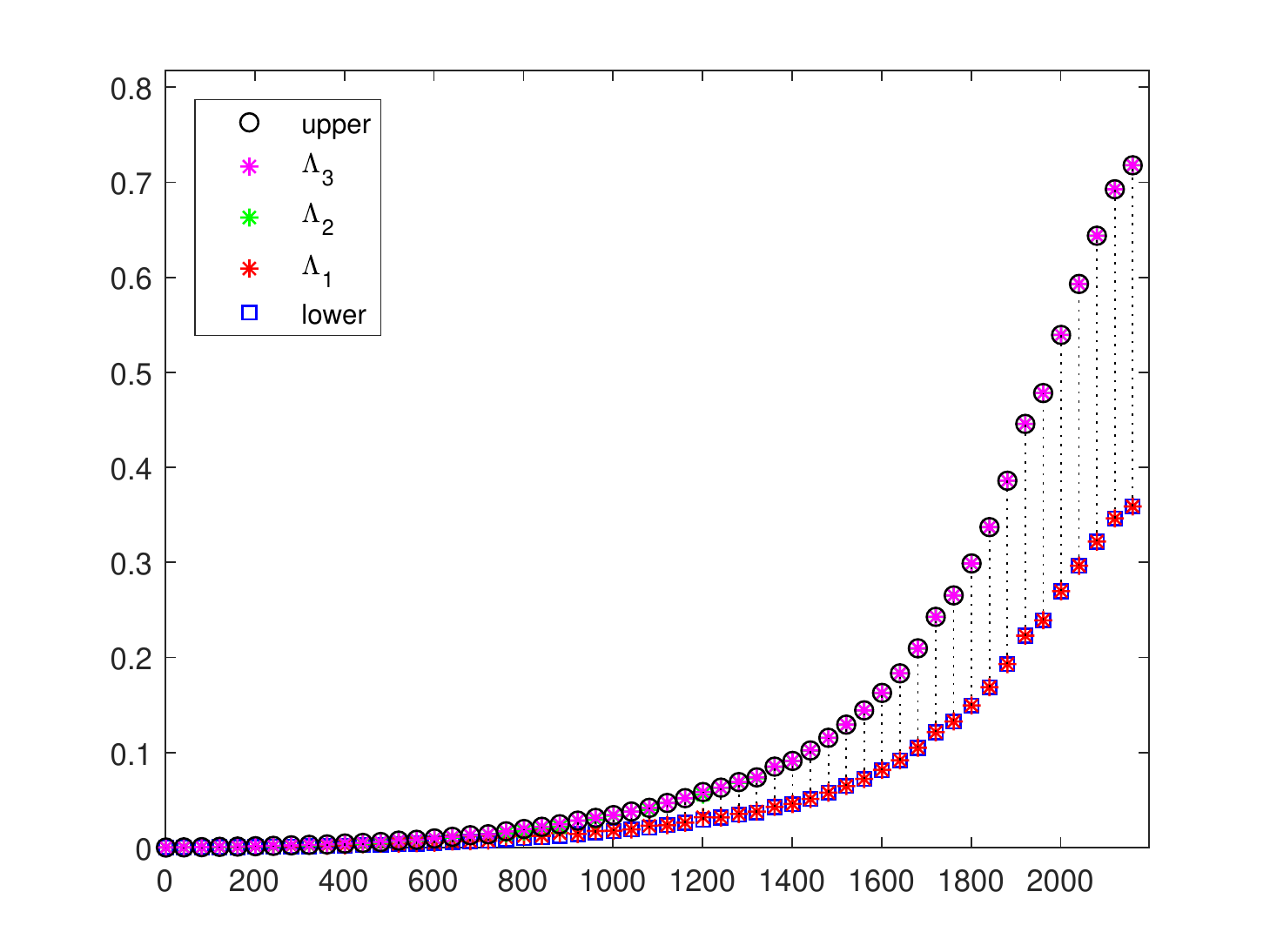}
		\subcaption{1:40 subsampling}
	\end{subfigure}
	\caption{3D case. Pointwise comparison of the lower bound $\beta \laplacian_p$ (\textcolor{blue}{$\mysquare$}) and the upper bound $\alpha \laplacian_p$ (\textcolor{black}{$\mycircle$}) with the eigenvalue functions $\lambda_i(f^{p,\alpha,\beta})$, $i=1,2,3$ (\textcolor{red}{$\ast$},\textcolor{green}{$\ast$},\textcolor{magenta}{$\ast$}) for $n=10$, $p=5$, $\alpha=1$, and $\beta=0.5$.}\label{bounds505}
\end{figure}
\begin{figure}[t!]
	\centering
	\begin{subfigure}[c]{.47\textwidth}
		\includegraphics[width=\textwidth]{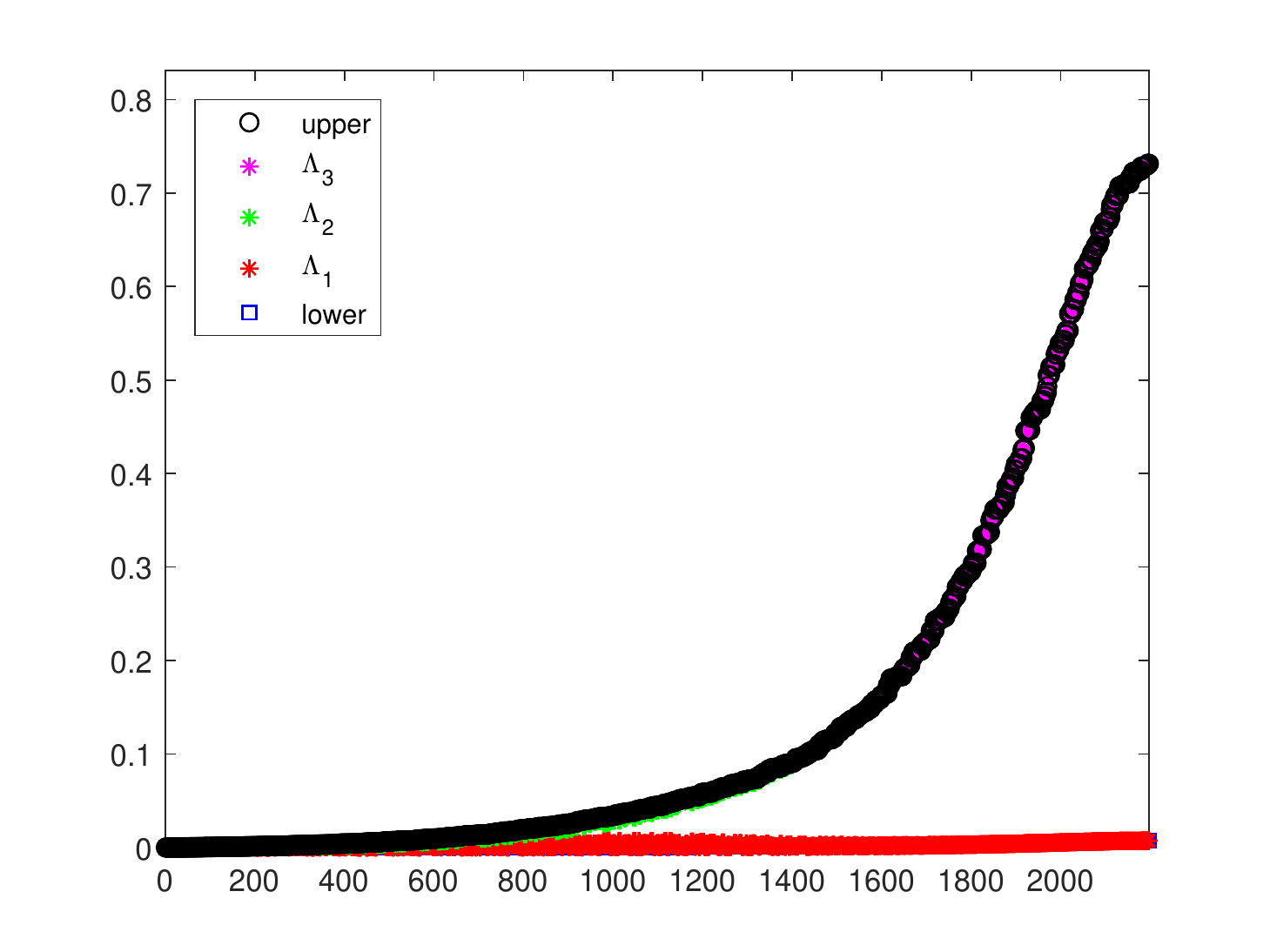}
		\subcaption{full sampling}
	\end{subfigure}
	\begin{subfigure}[c]{.47\textwidth}
		\includegraphics[width=\textwidth]{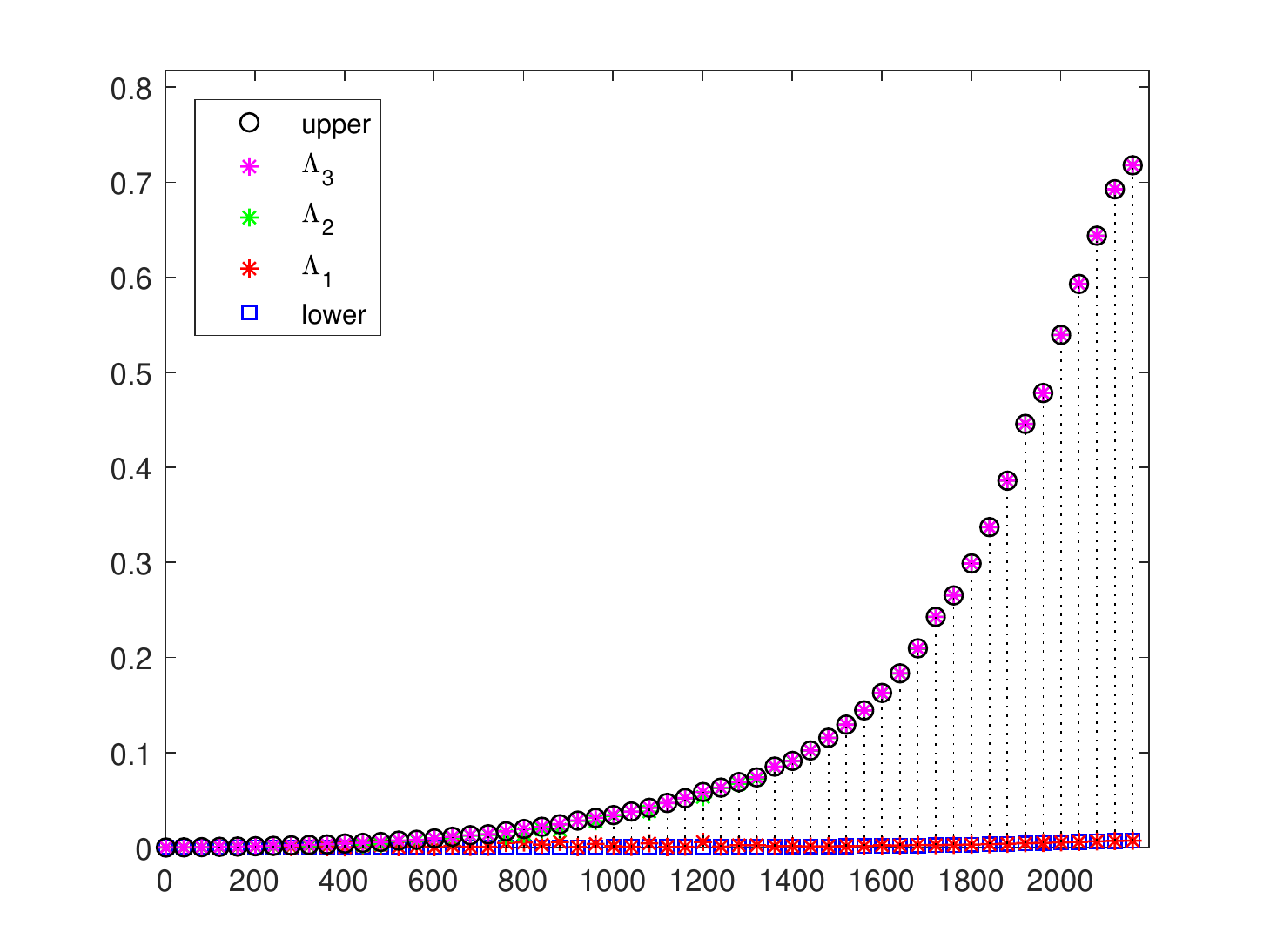}
		\subcaption{1:40 subsampling}
	\end{subfigure}
	\caption{3D case. Pointwise comparison of the lower bound $\beta \laplacian_p$ (\textcolor{blue}{$\mysquare$}) and the upper bound $\alpha \laplacian_p$ (\textcolor{black}{$\mycircle$}) with the eigenvalue functions $\lambda_i(f^{p,\alpha,\beta})$, $i=1,2,3$ (\textcolor{red}{$\ast$},\textcolor{green}{$\ast$},\textcolor{magenta}{$\ast$}) for $n=10$, $p=5$, $\alpha=1$, and $\beta=0.01$.}\label{bounds5001}
\end{figure}

\paragraph{Spectral bounds}
We now verify numerically the sharpness of the theoretical bounds provided in \eqref{eq:behavior_eig2D} and  \eqref{eq:behavior_eig3D} for the eigenvalue functions $\lambda_i(f^{p,\alpha,\beta})$, $i=1,\ldots,d$ and $d=2,3$.
For $d=2$, we compare the values in the sets $\Lambda_i$, $i=1,2$ with the values in the set $\Delta$ multiplied by $\alpha$ and $\beta$, to obtain the lower and upper bounds. For all sets we keep the ordering of $\Gamma$ to be the one corresponding to the ascending sorting of $\Delta$. As shown in Figures \ref{bounds2D305}(a)--\ref{bounds2D5001}(a), relation \eqref{eq:behavior_eig2D} holds for fixed $n=20$, $\alpha=1$, and varying $p,\beta$ in the sets $\{3,5\}$, $\{0.5,0.01\}$, respectively. In Figures \ref{bounds2D305}(b)--\ref{bounds2D5001}(b), we verify again relation \eqref{eq:behavior_eig2D}, but now only a subsampling (1 out of 10 points) of the sets $\Lambda_i$, $i=1,2$ and $\Delta$ is depicted. As mentioned in Remark \ref{rmk:sharp2D}, we see a very good match between the lower bound and $\lambda_1(f^{p,\alpha,\beta})$, and between the upper bound and $\lambda_2(f^{p,\alpha,\beta})$.

Similarly, for $d=3$, we compare the values in the sets $\Lambda_i$, $i=1,2,3$ with the values in the set $\Delta$ multiplied by $\alpha$ and $\beta$. We assume the ordering of the grid $\Gamma$ to be the one corresponding to the ascending sorting of $\Delta$. As shown in Figures \ref{bounds305}(a)--\ref{bounds5001}(a), relation \eqref{eq:behavior_eig3D} holds for fixed $n=10$, $\alpha=1$, and varying $p,\beta$ in the sets $\{3,5\}$, $\{0.5,0.01\}$, respectively. In Figures \ref{bounds305}(b)--\ref{bounds5001}(b), we check again relation \eqref{eq:behavior_eig3D}, but now only a subsampling (1 out of 40 points) of the sets $\Lambda_i$, $i=1,2,3$ and $\Delta$ is depicted.
From these examples, we conclude that both the lower and upper bounds are sharp (see Remark \ref{rmk:sharp3D}).

\section{Ad hoc solvers for IgA curl-div matrices}\label{sec:proposals}
In this section, we focus on the fast solution of
\begin{equation}\label{eq:system}
\mathcal{A}^{p,\alpha,\beta}_{\nn}\vecu=\bb,
\end{equation}
with $\mathcal{A}^{p,\alpha,\beta}_{\nn}$ defined in \eqref{eq:matr_Amu} for $d=2$ and in \eqref{eq:matr_Amu3D} for $d=3$.
We will exploit the spectral information on the coefficient matrix-sequence $\{\mathcal{A}^{p,\alpha,\beta}_{\nn}\}_{\nn}$ illustrated in the previous section to design ad hoc solvers for \eqref{eq:system}. In particular, we extend to our case the successful multi-iterative method for solving IgA discretizations of elliptic problems applied in \cite{SINUM-IgA,serra2,cmame2}. We end the section with several numerical experiments.

\subsection{A multi-iterative approach: multigrid plus $p$-independent PCG (or PGMRES) as smoother}\label{sub:multi}
We discuss a vector extension to the curl-div problem of the multi-iterative method proposed in \cite{serra2} for solving the linear systems coming from the IgA discretization of second-order elliptic problems. More precisely, we propose a strategy made up of the following building blocks:
\begin{itemize}
	\item a V-cycle with a $d\times d$ block multi-linear interpolation prolongator at each level,
	\item one Gauss--Seidel pre-smoothing iteration at each level, a few post-smoothing iterations of a preconditioned conjugate gradient (PCG) or of a preconditioned generalized minimal residual (PGMRES) method at the finest level whose preconditioner is chosen as a direct sum of $d$-level Toeplitz matrices generated by
	$$g_p(\theta_1,\ldots,\theta_d):=\prod_{i=1}^d\mfm_{{p-1}}(\theta_i),$$
	and one Gauss--Seidel post-smoothing iteration at the other levels.
\end{itemize}
The previous building blocks are strongly guided by the knowledge of the eigenvalue functions of the symbol of the matrix-sequence $\{\mathcal{A}^{p,\alpha,\beta}_{\nn}\}_{\nn}$.
Their properties described in Corollaries \ref{cor:zeros2D} and \ref{cor:zeros3D} imply the small eigenvalues of $\mathcal{A}^{p,\alpha,\beta}_{\nn}$ to be related to subspaces of both low and high frequencies. Mimicking the idea from \cite{serra2} justifies also in our case the use of a so-called \emph{multi-iterative method} (see \cite{Serra93}), that is a method made up of different basic iterative solvers having complementary spectral behavior. In particular, as already pointed out, we focus on a V-cycle multigrid which is able to cope with the standard ill-conditioning in the low frequencies, combined with a PCG (or PGMRES) post-smoothing at the finest level whose preconditioner works in the subspace of high frequencies. As confirmed by the numerical examples in Section \ref{sub:numer_mg}, the resulting method is optimal, i.e., has a convergence rate independent of the matrix-size and is robust with respect to the degree.

In the remaining part of this section, we describe in detail our V-cycle multigrid solver, and in particular how to build the prolongator and the preconditioner for the PCG (or PGMRES) smoother. We end with an intuitive spectral motivation why the proposed preconditioner works.

\paragraph{Multigrid idea}
Let $A\vecu=\bb$ be the linear system we want to solve, with $\vecu,\bb\in\mathbb{R}^N$ and $A\in \mathbb{R}^{N\times N}$ SPD matrix. Fix $\lev+1$ integers $N=:N_0>N_1>\cdots>N_\lev>0$, where $0<\lev<N$ denotes the maximum number of levels we decided to use. A multigrid method requires the following ingredients:
\begin{itemize}
	\item appropriate smoothers $S_\ell$, $\tilde{S}_\ell$, and the corresponding smoothing steps $s_\ell$, $\tilde{s}_\ell$ for every level $\ell=0,\ldots,\lev-1$;
	\item restriction operators $R_\ell:\mathbb{R}^{N_\ell}\rightarrow\mathbb{R}^{N_{\ell+1}}$  and prolongation operators
	$P_\ell:\mathbb{R}^{N_{\ell+1}}\rightarrow\mathbb{R}^{N_\ell}$ to transfer a quantity between levels $\ell$ and $\ell+1$, for $\ell=0,\ldots,\lev-1$;
	\item a hierarchy of matrices $A_\ell\in\mathbb{R}^{N_\ell\times N_\ell}$, $\ell=0,\ldots,\lev$ (with $A_{0}:=A$).
\end{itemize}
For solving (iteratively) a linear system of the form
$A_\ell\vecu_\ell=\bb_\ell$ at level $0\le\ell\le L-1$,
a single multigrid iteration in the \emph{$V$-cycle version} consists of the following steps. Given an initial approximation $\vecu_\ell^{(0)}$,
\begin{enumerate}
	\item $s_\ell$ pre-smoothing steps are performed with $S_\ell$, resulting in the new approximation $\vecu_\ell^{(1)}$;
	\item this approximation is corrected using the coarser level (\emph{coarse grid correction}):
	the residual $\rr_\ell=\bb_\ell-A_\ell\vecu_\ell^{(1)}$ is restricted to the coarser level, $\rr_{\ell+1}=R_\ell\rr_\ell$, and is used to build the error equation
	\begin{equation*}
	A_{\ell+1}\ee_{\ell+1}=\rr_{\ell+1};
	\end{equation*}
	then, this system is solved approximately by a single recursive call of the multigrid method, and its solution $\ee_{\ell+1}$ is prolongated back to the finer level, $\ee_\ell=P_\ell\ee_{\ell+1}$, which is used to update the approximation to $\vecu_\ell^{(2)}=\vecu_\ell^{(1)}+\ee_\ell$.
	\item  $\tilde{s}_\ell$ post-smoothing steps are performed with $\tilde{S}_\ell$ and results in the improved approximation $\vecu_\ell^{(3)}$.
\end{enumerate}
We focus on the case $R_\ell:=(P_\ell)^T$ and $A_{\ell+1}:=(P_\ell)^T A_\ell P_\ell$, the so-called \emph{Galerkin approach}.

\paragraph{Choice of the prolongator in our multigrid}
We now search for an appropriate prolongator for the multigrid in order to address our specific linear system. To this end, we follow the approach in \cite{serra2} for second order elliptic PDEs by exploiting the fact that the IgA matrices of interest can be considered as perturbations of Toeplitz matrices.

Let us start by recalling what is a typical choice for the prolongator in the multigrid setting for $d$-level Toeplitz matrices. Fixing $\mm_\ell:=((m_\ell)_1,\ldots,(m_\ell)_d)\in\mathbb{N}^d$, $\mm_{\ell+1}<\mm_\ell$, $\ell=0,\ldots,\lev-1$ with odd components, it was suggested in \cite{ADS2,ADS,Fior-Serra-2,Serra93} to consider prolongators of the form
\begin{equation}\label{eq:prolong}
P_{\mm_\ell}:=T_{\mm_\ell}(q_\ell)\ (K_{\mm_\ell})^T,
\end{equation}
where $q_\ell$ is a nonnegative $d$-variate trigonometric polynomial and
\begin{equation}\label{eq:Kut}
K_{\mm_\ell}:=K_{(m_\ell)_{1}}\otimes\cdots\otimes K_{(m_\ell)_{d}},
\end{equation}
with $K_{(m_\ell)_i}\in\mathbb{R}^{(m_{\ell+1})_i\times (m_\ell)_i}$, $(m_{\ell+1})_i\le (m_\ell)_i$ so-called \emph{cutting matrices}. In this view, since the IgA matrices in \cite{serra2} were perturbations of $d$-level Toeplitz matrices, one opted in \cite{serra2} for the same prolongator as in \eqref{eq:prolong} with
\begin{equation}\label{eq:pol}
q_\ell(\ttheta):=q(\ttheta):=\prod_{i=1}^d(1+\cos(\theta_i)),
\end{equation}
and $(K_{\mm_\ell})^T$ as in \eqref{eq:Kut} with
\begin{equation*}
K_{(m_\ell)_i}:=
\begin{bmatrix}
0 & 1 & 0 &   &        &    & &\\
&   & 0 & 1 & 0       &    & & \\
&   &   & \ddots  & \ddots & \ddots   & & \\
&   &   &   &        & 0  & 1 & 0
\end{bmatrix} \in {\mathbb R}^{(m_{\ell+1})_i\times (m_\ell)_i}.
\end{equation*}
It can be shown that such $P_{\mm_\ell}$  admits the following recursive expression
\begin{equation*}
P_{\mm_\ell}=P_{(m_\ell)_{1}}\otimes\cdots\otimes P_{(m_\ell)_{d}},
\end{equation*}
with $(m_\ell)_i=\frac{(m_0)_i-2^\ell+1}{2^\ell}$, $i=1,\ldots,d$, $\ell=0,\ldots,\lev-1$, and
\begin{equation}\label{eq:projector-1d}
P_{(m_\ell)_i}:=\frac{1}{2}
\begin{bmatrix}
1 & 2 & 1 &   &        &    & &\\
&   & 1 & 2 & 1       &    & & \\
&   &   & \ddots  & \ddots & \ddots   & & \\
&   &   &   &        & 1  & 2 & 1
\end{bmatrix}^T \in {\mathbb R}^{(m_\ell)_i\times(m_{\ell+1})_i}.
\end{equation}

A multigrid for $d$-level block Toeplitz matrices has been proposed in \cite{Huk}. As observed in that paper, when the generating function is an $s\times s$ diagonal matrix-valued function, a multigrid
for the whole matrix can be seen as $s$ independent multigrids for $d$-level Toeplitz matrices with scalar-valued symbols. Formally, let $f:[-\pi,\pi]^d\rightarrow\mathbb{C}^{s\times s}$ be defined as
\begin{equation*}
f(\ttheta):=\begin{bmatrix}
f_{1,1}(\ttheta) &  \\
& f_{2,2}(\ttheta) \\
& & \ddots \\
& & & f_{s,s}(\ttheta)
\end{bmatrix},
\end{equation*}
where each $f_{i,i}:[-\pi,\pi]^d\rightarrow \mathbb{C}$ has only a single isolated zero in $[-\pi,\pi]^d$ of order (at most) $2$. Then, we can define $s$ multigrid methods with prolongators as in \eqref{eq:prolong}, one for each $f_{i,i}$, choosing polynomials like in \eqref{eq:pol}.

In order to define the prolongator in our setting, let us focus on the $d\times d$ block diagonal matrix
\begin{equation}\label{diag}
\mathcal{D}^{p,\alpha,\beta}_{\nn}:=
\begin{bmatrix}
(\mathcal{A}^{p,\alpha,\beta}_{\nn})^{(1,1)} \\
& \ddots \\
& & (\mathcal{A}^{p,\alpha,\beta}_{\nn})^{(d,d)}
\end{bmatrix},
\end{equation}
by collecting the diagonal blocks of the matrix ${\mathcal{A}^{p,\alpha,\beta}_{\nn}}$. We recall that each of these blocks are perturbations of $d$-level Toeplitz matrices generated by
$$
f^{p,\alpha,\beta}_{1,1}, \ldots, f^{p,\alpha,\beta}_{d,d},
$$
respectively. Therefore, we define the following $d\times d$ block prolongator
\begin{equation}\label{eq:2by2prol}
\mathcal{P}_{\mm_\ell}:=\begin{bmatrix}
P_{\mm_\ell} \\
& \ddots \\
& & P_{\mm_\ell}
\end{bmatrix}
=I_d\otimes P_{(m_\ell)_1} \otimes \cdots \otimes P_{(m_\ell)_d},
\end{equation}
where $I_d$ is the $d\times d$ identity matrix, $P_{(m_\ell)_i}$ is defined in \eqref{eq:projector-1d}, and $d=2,3$. We expect a multigrid with the above prolongator and with a standard smoother (e.g., Gauss--Seidel or Jacobi smoother) to have a convergence rate independent of the matrix-size, not only for $\mathcal{D}^{p,\alpha,\beta}_{\nn}$ but also for $\mathcal{A}^{p,\alpha,\beta}_{\nn}$ when $|\alpha-\beta|\ll1$. On the other hand, the prolongator in \eqref{eq:2by2prol} (combined with any standard smoother) will lead to a convergence rate worsening with $p$. This drawback is addressed and overcome by choosing a $p$-independent PCG (or PGMRES) as smoother at the finest level.

\paragraph{$p$-independent PCG (or PGMRES) as post-smoother}
We now discuss the preconditioner used for the PCG (or PGMRES) smoother. In particular, we focus here on its construction and on how to solve efficiently the associated linear system. Afterwards, we will discuss why it copes with the ill-conditioning introduced in the high frequencies for large~$p$.

As mentioned in the beginning of the section, we define the preconditioner as
\begin{equation} \label{eq:kron_precond}
\mathcal{T}_{\nn}^{p}:=
\begin{bmatrix}
T_{\mm}(g_p) \\
& \ddots \\
& & T_{\mm}(g_p)
\end{bmatrix}
= I_d\otimes T_{n_1+p-2}(\mfm_{p-1}) \otimes \cdots \otimes T_{n_d+p-2}(\mfm_{p-1}),
\end{equation}
where $I_d$ is again the $d\times d$ identity matrix.
Such a preconditioner is easy to construct because
\begin{equation*}
(T_{n_l+p-2}(\mfm_{p-1}))_{i,j}=
\begin{cases}
\phi_{2p-1}(p-i+j), & \text{if } |i-j|<p, \\
0, &  \text{otherwise},
\end{cases}
\end{equation*}
i.e., its entries are nothing but evaluations of cardinal B-splines. Moreover, due to its tensor-product nature, the preconditioner \eqref{eq:kron_precond} is easily solvable. Indeed, we have
\begin{equation*}
\left(\mathcal{T}_{\nn}^{p}\right)^{-1}=
\begin{bmatrix}
(T_{\mm}(g_p))^{-1} \\
& \ddots \\
& & (T_{\mm}(g_p))^{-1}
\end{bmatrix},
\end{equation*}
where, by the properties of Kronecker product,
\begin{equation*}
(T_{\mm}(g_p))^{-1} = (T_{n_1+p-2}(\mfm_{p-1}))^{-1}\otimes \cdots\otimes (T_{n_d+p-2}(\mfm_{p-1})^{-1},
\end{equation*}
and each $(T_{n_l+p-2}(\mfm_{p-1}))^{-1}$ can be solved by means of an LU factorization which is optimal for banded matrices, i.e., linear in the matrix size (and quadratic in the bandwidth). Therefore, the computational cost for solving a linear system with coefficient matrix \eqref{eq:kron_precond} is linear in the total matrix size $d\prod_{l=1}^d(n_l+p-2)$.

\paragraph{Spectral motivation for the choice of preconditioner}
As explained before, we aimed for a preconditioner that works in the subspace of high frequencies. This means that it should remove the numerical zeros of the symbol $f^{p,\alpha,\beta}$ described in Corollaries \ref{cor:zeros2D} and \ref{cor:zeros3D} for $d=2,3$.
This can be formally justified with the help of the GLT axioms as follows.

Let us first look at the permuted matrices
$\Pi_{\mm}\,\mathcal{A}^{p,\alpha,\beta}_{\nn}\,\Pi_{\mm}^T$, so that we deal with a low rank perturbation of $d$-level block Toeplitz matrices. From Section \ref{sec:spectral} we know that $\{\Pi_{\mm}\, \mathcal{A}^{p,\alpha,\beta}_{\nn}\,\Pi_{\mm}^T\}_{\nn}$ is a GLT sequence with matrix-valued symbol $f^{p,\alpha,\beta}$. Furthermore, our permuted preconditioner is the $d$-level block Toeplitz matrix generated by $g_p I_d$, where $I_d$ is the $d\times d$ identity matrix.
Hence, by applying {\bf GLT3} for Toeplitz sequences generated by matrix-valued functions, $\{T_{\mm}(g_p I_d)\}_{\nn}$ is a GLT sequence with symbol $g_p I_d$. Consequently, by applying {\bf GLT2} and {\bf GLT1} (again their block version), we conclude that the preconditioned sequence
\begin{equation*}
\left\{(T_{\mm}(g_p I_d))^{-1}\left(\Pi_{\mm}\, \mathcal{A}^{p,\alpha,\beta}_{\nn}\,\Pi_{\mm}^T\right)\right\}_{\nn}
\end{equation*}
is still a GLT sequence with symbol
\begin{equation*}
\frac{f^{p,\alpha,\beta}(\theta_1,\ldots,\theta_d)}{g_p(\theta_1,\ldots,\theta_d)}.
\end{equation*}
Like in \cite{SINUM-IgA,serra2} for the preconditioned Laplacian symbol, an elementary analytical study of the above ratio (using item \ref{eq:sp} of Lemma \ref{lem:h-f-e}) shows that the numerical zeros of $f^{p,\alpha,\beta}$ at the points $(\theta_1,\ldots,\theta_d)$, for any $\theta_i=\pm\pi$, $i\in \{1,\ldots,d\}$
and for $p$ being large, are removed.
Hence, the preconditioned symbol keeps only the zero at $(0,\ldots,0)$, which can be treated effectively by a standard V-cycle multigrid, if the parameter $\beta$ is not too small.

Finally, we have to take into account that we work with the original coefficient matrix $\mathcal{A}^{p,\alpha,\beta}_{\nn}$ and not with its permuted version $\Pi_{\mm}\,\mathcal{A}^{p,\alpha,\beta}_{\nn}\,\Pi_{\mm}^T$. However, this permutation does not affect the argument, and hence, this explains why the proposed preconditioner is responsible for the robustness with respect to $p$ of the global method; it is the part that removes the ill-conditioning in the high frequency subspaces.

\begin{remark}
	The proposed multi-iterative method is able to cut the ill-conditioning of the linear system \eqref{eq:system} with respect to the matrix-size and the degree of the approximation, but another source of ill-conditioning should be taken into account, i.e., the one related to the parameters $\alpha,\beta$ of the problem. In this view, we adopt the classical strategy of using the proposed multigrid-type method as preconditioner for the CG method. As shown in the next numerical section, the result is indeed a valid attempt to guarantee robustness with respect to $\alpha,\beta$. The study of more sophisticated strategies will be subject of further research.
\end{remark}


\subsection{Numerical examples}\label{sub:numer_mg}

In the following, we test the effectiveness of the multi-iterative method introduced in Section \ref{sub:multi} used as a stand-alone method (label ``MIM'') or as preconditioner for the CG method (label ``$P_{\rm MIM}$'') for solving the linear system \eqref{eq:system}.
In addition, we check the performance of a Jacobi-type preconditioning (label ``$P_{\rm WL}$'') given by one iteration of our multigrid applied to the $d\times d$-block diagonal matrix $\mathcal{D}^{p,\alpha,\beta}_{\nn}$ defined in \eqref{diag} for $d=2,3$. The subscript ``${\rm WL}$'' stands for Weighted Laplacian and is used to recall that the diagonal blocks of $\mathcal{D}^{p,\alpha,\beta}_{\nn}$ are some sort of scalar Laplacian incorporating the weights $\alpha,\beta$; then, from our spectral analysis we expect such a preconditioner to improve reasonably the conditioning with respect to all the involved parameters as well.

The specific multigrid involved in all our proposals ($P_{\rm WL}$, $P_{\rm MIM}$, MIM) is defined by the following setting:
\begin{itemize}
	\item a V-cycle with the $d\times d$ block prolongator \eqref{eq:2by2prol} at each level (the number of recursion levels is given by $\lev=\log_2(n+p-1)$);
	\item one Gauss--Seidel pre-smoothing iteration at each level;
	$p$ post-smoothing iterations of the PCG (for $P_{\rm WL}$) or PGMRES (for $P_{\rm MIM}$ and MIM) at the finest level whose preconditioner is chosen as $\mathcal{T}_{\nn}^{p}$ defined in \eqref{eq:kron_precond}; and one Gauss--Seidel post-smoothing iteration at the other levels.
\end{itemize}

\renewcommand\arraystretch{1.2}
\begin{table}[t!]
	\centering
	\begin{tabular}{|c|cccc|c|cccc|c|cccc|}
		\hline
		\multicolumn{5}{|c|}{$p=1$} &\multicolumn{5}{c|}{$p=2$} &\multicolumn{5}{c|}{$p=3$} \\ \hline
		$n$ & $P_{{\rm WL}}$ & $P_{{\rm MIM}}$ & MIM & CG & $n$ & $P_{{\rm WL}}$ & $P_{{\rm MIM}}$ & MIM & CG & $n$ & $P_{{\rm WL}}$ & $P_{{\rm MIM}}$ & MIM & CG \\ \hline
		16  &   22  &   7  &   21  &   57  &   15  &   23  &   6  &   15  &   40  &   14  &   23  &   5  &   12  &   48 \\
		32  &   24  &   7  &   21  &  123  &   31  &   24  &   6  &   15  &   77  &   30  &   24  &   5  &   12  &   73 \\
		64  &   26  &   7  &   21  &  252  &   63  &   25  &   6  &   15  &  153  &   62  &   25  &   6  &   14  &  150 \\
		128  &   26  &   7  &   21  &  519  &  127  &   26  &   6  &   16  &  312  &  126  &   26  &   6  &   15  &  311 \\
		\hline
		\hline
		\multicolumn{5}{|c|}{$p=4$} &\multicolumn{5}{c|}{$p=5$} &\multicolumn{5}{c|}{$p=6$} \\ \hline
		$n$ & $P_{{\rm WL}}$ & $P_{{\rm MIM}}$ & MIM & CG & $n$ & $P_{{\rm WL}}$ & $P_{{\rm MIM}}$ & MIM & CG & $n$ & $P_{{\rm WL}}$ & $P_{{\rm MIM}}$ & MIM & CG \\ \hline
		13  &   23  &   5  &   11  &   82  &   12  &   22  &   5  &   10  &  125  &   11  &   22  &   6  &   11  &  206 \\
		29  &   24  &   5  &   12  &  113  &   28  &   24  &   5  &   11  &  206  &   27  &   24  &   5  &   12  &  333 \\
		61  &   25  &   5  &   13  &  167  &   60  &   25  &   5  &   13  &  267  &   59  &   25  &   6  &   13  &  475 \\
		125  &   26  &   6  &   14  &  330  &  124  &   26  &   6  &   14  &  383  &  123  &   26  &   6  &   14  &  620 \\
		\hline
	\end{tabular}
	\caption{2D case. Number of iterations required by $P_{\rm WL}$, $P_{\rm MIM}$, MIM, and the CG method for $\alpha=1$, $\beta=0.1$}\label{tab01}
	\bigskip
	\centering
	\begin{tabular}{|c|cccc|c|cccc|c|cccc|}
		\hline
		\multicolumn{5}{|c|}{$p=1$} &\multicolumn{5}{c|}{$p=2$} &\multicolumn{5}{c|}{$p=3$} \\ \hline
		$n$ & $P_{{\rm WL}}$ & $P_{{\rm MIM}}$ & MIM & CG & $n$ & $P_{{\rm WL}}$ & $P_{{\rm MIM}}$ & MIM & CG & $n$ & $P_{{\rm WL}}$ & $P_{{\rm MIM}}$ & MIM & CG \\ \hline
		16  &   42  &   18  &  122  &  115  &   15  &   63  &   16  &  114  &   91  &   14  &   67  &   15  &   94  &   91 \\
		32  &   57  &   19  &  140  &  291  &   31  &   67  &   17  &  116  &  207  &   30  &   68  &   16  &   94  &  201 \\
		64  &   67  &   21  &  152  &  685  &   63  &   71  &   18  &  116  &  427  &   62  &   72  &   17  &  102  &  419 \\
		128  &   73  &   21  &  156  & 1438  &  127  &   74  &   18  &  119  &  867  &  126  &   75  &   19  &  110  &  872 \\
		\hline
		\hline
		\multicolumn{5}{|c|}{$p=4$} &\multicolumn{5}{c|}{$p=5$} &\multicolumn{5}{c|}{$p=6$} \\ \hline
		$n$ & $P_{{\rm WL}}$ & $P_{{\rm MIM}}$ & MIM & CG & $n$ & $P_{{\rm WL}}$ & $P_{{\rm MIM}}$ & MIM & CG & $n$ & $P_{{\rm WL}}$ & $P_{{\rm MIM}}$ & MIM & CG \\ \hline
		13  &   65  &   15  &   88  &  107  &   12  &   61  &   15  &   80  &  149  &   11  &   60  &   16  &   88  &  223 \\
		29  &   65  &   15  &   87  &  207  &   28  &   64  &   16  &   85  &  292  &   27  &   64  &   17  &   80  &  436 \\
		61  &   70  &   17  &   96  &  435  &   60  &   68  &   17  &   92  &  451  &   59  &   68  &   17  &   89  &  680 \\
		125  &   73  &   19  &  106  &  922  &  124  &   72  &   18  &  103  &  982  &  123  &   72  &   18  &  100  & 1039 \\
		\hline
	\end{tabular}
	\caption{2D case. Number of iterations required by $P_{\rm WL}$, $P_{\rm MIM}$, MIM, and the CG method for $\alpha=1$, $\beta=0.01$}\label{tab001}
\end{table}

For our 2D test example, we consider problem \eqref{eq:curl-div} with $d=2$ and the manufactured solution
\begin{eqnarray*}
	{\bm u}=\left[\begin{array}{c}
		u^1(x_1, x_2)\\
		u^2(x_1, x_2)\\
	\end{array}\right]
	=
	\left[\begin{array}{c}
		\sin(2\pi x_1(1-x_1)x_2(1-x_2))\\
		\cos(2\pi x_1(1-x_1)x_2(1-x_2))-1\\
	\end{array}\right]
\end{eqnarray*}
with $(x_1, x_2)\in (0,1)^2$. Moreover, we fix $n_1=n_2=n$.
We compare the number of iterations required by our proposals with the number of CG iterations in Tables \ref{tab01}--\ref{tab001}. Note that in all tables we choose $n$ so to keep the same matrix-size for each $p$. As stopping criterion, we use $\|\rr^{(k)}\|_2/\|\rr^{(0)}\|_2<10^{-7}$, where $\rr^{(k)}$ is the residual vector after $k$ iterations. The initial guess is always chosen to be the zero vector.

In Table \ref{tab01} we fix $\alpha=1,\beta=0.1$. As expected, while the number of CG iterations increases both in the matrix-size and the degree, all our ad hoc proposals are optimal and robust. However, we clearly notice that $P_{{\rm MIM}}$ is outperforming $P_{{\rm WL}}$ and MIM. In other words, the multi-iterative method used as preconditioner is the most robust  strategy with respect to all parameters of the problem. 
This behavior is even more evident when $\beta$ decreases to $0.01$ as confirmed by the results in Table \ref{tab001}.
Finally, note that for both $\beta=0.1,0.01$ the iterations required by MIM for $p=1$ is higher than for the other degrees. This could be justified by the fact that for $p=1$, the preconditioner $\mathcal{T}_{\nn}^{p}$ used at the finest level post-smoothing step is nothing but the identity matrix.

\begin{table}[t!]
	\centering
	\begin{tabular}{|c|ccc|c|ccc|c|ccc|}
		\hline
		\multicolumn{4}{|c|}{$p=1$} &\multicolumn{4}{c|}{$p=2$} &\multicolumn{4}{c|}{$p=3$} \\ \hline
		$n$ & $P_{{\rm MIM}}$ & MIM & CG & $n$ & $P_{{\rm MIM}}$ & MIM & CG & $n$ & $P_{{\rm MIM}}$ & MIM & CG \\ \hline
		8  &   7  &   19  &   27  &    7  &   6  &   14  &   29  &    6  &   5  &   10  &   55 \\
		16  &   8  &   22  &   56  &   15  &   6  &   15  &   38  &   14  &   5  &   11  &   64 \\
		32  &   8  &   23  &  120  &   31  &   6  &   15  &   74  &   30  &   5  &   12  &   76 \\
		\hline
		\hline
		\multicolumn{4}{|c|}{$p=4$} &\multicolumn{4}{c|}{$p=5$} &\multicolumn{4}{c|}{$p=6$} \\ \hline
		$n$ & $P_{{\rm MIM}}$ & MIM & CG & $n$ & $P_{{\rm MIM}}$ & MIM & CG & $n$ & $P_{{\rm MIM}}$ & MIM & CG \\ \hline
		5  &   6  &   13  &   79  &    4  &      &       &       &    3  &      &       &      \\
		13  &   5  &   10  &   93  &   12  &   6  &   15  &  161  &   11  &   8  &   15  &  267 \\
		29  &   5  &   11  &  114  &   28  &   5  &   11  &  212  &   27  &   6  &   14  &  353 \\
		\hline
	\end{tabular}
	\caption{3D case. Number of iterations required by $P_{\rm MIM}$, MIM, and the CG method for $\alpha=1$,
		$\beta=0.1$}\label{tab3D01}
	\bigskip
	\centering
	\begin{tabular}{|c|ccc|c|ccc|c|ccc|}
		\hline
		\multicolumn{4}{|c|}{$p=1$} &\multicolumn{4}{c|}{$p=2$} &\multicolumn{4}{c|}{$p=3$} \\ \hline
		$n$ & $P_{{\rm MIM}}$ & MIM & CG & $n$ & $P_{{\rm MIM}}$ & MIM & CG & $n$ & $P_{{\rm MIM}}$ & MIM & CG \\ \hline
		8  &   12  &   61  &   38  &    7  &   15  &  106  &   46  &    6  &   13  &   88  &   72 \\
		16  &   17  &  121  &  114  &   15  &   16  &  112  &  103  &   14  &   14  &   87  &  102 \\
		32  &   20  &  158  &  292  &   31  &   16  &  120  &  205  &   30  &   15  &   94  &  202 \\
		\hline
		\hline
		\multicolumn{4}{|c|}{$p=4$} &\multicolumn{4}{c|}{$p=5$} &\multicolumn{4}{c|}{$p=6$} \\ \hline
		$n$ & $P_{{\rm MIM}}$ & MIM & CG & $n$ & $P_{{\rm MIM}}$ & MIM & CG & $n$ & $P_{{\rm MIM}}$ & MIM & CG \\ \hline
		5  &   16  &   72  &  114  &    4  &       &       &       &    3  &       &       &      \\
		13  &   15  &   83  &  142  &   12  &   17  &   88  &  229  &   11  &   19  &  105  &  360 \\
		29  &   15  &   85  &  214  &   28  &   15  &   81  &  316  &   27  &   17  &   80  &  487 \\
		\hline
	\end{tabular}
	\caption{3D case. Number of iterations required by $P_{\rm MIM}$, MIM, and the CG method for $\alpha=1$, $\beta=0.01$}\label{tab3D001}
\end{table}

The story is similar in the 3D case.
As test example, we consider problem \eqref{eq:curl-div} with $d=3$ and the manufactured solution
\begin{eqnarray*}
	{\bm u}=\left[\begin{array}{c}
		u^1(x_1, x_2,x_3)\\
		u^2(x_1, x_2,x_3)\\
		u^3(x_1,x_2,x_3)\\
	\end{array}\right]
	=
	\left[\begin{array}{c}
		\sin(2\pi x_1(1-x_1)x_2(1-x_2)x_3(1-x_3))\\
		\cos(2\pi x_1(1-x_1)x_2(1-x_2)x_3(1-x_3))-1\\
		u^1(x_1, x_2,x_3)+u^2(x_1, x_2,x_3)\\
	\end{array}\right]
\end{eqnarray*}
with $(x_1, x_2,x_3)\in (0,1)^3$. Again, we fix $n_1=n_2=n_3=n$. We compare the number of iterations required by $P_{{\rm MIM}}$ and MIM with the number of CG iterations in Tables \ref{tab3D01}--\ref{tab3D001}. Also in 3D, the strategy $P_{{\rm MIM}}$ is the most robust with respect to all parameters of the problem.

\section{Conclusions}\label{sec:end}

In this paper, we focused on the B-spline approximation of a parameter-dependent prototype of an {Alfv\'en}-like operator, which appears as a subproblem in the complex MHD model, both in a 2D and 3D setting. In particular, we analyzed the spectral distribution of the corresponding coefficient matrices with the aim of designing fast iterative solvers for them.

By employing tools coming from the GLT theory, the study of the coefficient matrices has emphasized a rich spectral structure with a critical dependence on the several physical and approximation  parameters.
Such spectral information has been used  in order to design suitable iterative solvers for the corresponding linear systems, which result in a combination of multigrid technique and preconditioned Krylov solvers.


%

The case of operators with variable coefficients and complex geometries has not been discussed here but is of course imperative to face real-life MHD problems and need to be addressed in subsequent steps. We just remark that efficient solvers for the constant coefficient case on the reference unit square/cube domain are important not only as a starting step towards the treatment of fully general problems, but also because they often serve as efficient preconditioners for the most general setting, at least for not too 
extreme configurations (see, e.g., \cite{serra2,cmame2}).

\section*{Acknowledgment}
This work was partially supported
by the MIUR-DAAD Joint Mobility 2017 Programme through the project ``ATOMA'' and
by the Mission Sustainability Programme of the University of Rome ``Tor Vergata'' through the project ``IDEAS''. Moreover, C. Manni, M. Mazza, S. Serra-Capizzano, and H. Speleers are members of the INdAM Research group GNCS (Gruppo Nazionale per il Calcolo Scientifico), which partially supported this work as well.
Special thanks go to Prof. Franco Brezzi for the fruitful discussion at the IMACS 2016 conference.

%





\end{document}